\let\old@tocline\@tocline
\let\section@tocline\@tocline
\newcommand{\subsection@dotsep}{4.5}
\newcommand{\subsubsection@dotsep}{4.5}
	\leaders\hbox{$\m@th
		\mkern \subsection@dotsep mu\hbox{.}\mkern \subsection@dotsep mu$}\hfill
\let\subsection@tocline\@tocline
\let\@tocline\old@tocline
	\leaders\hbox{$\m@th
		\mkern \subsubsection@dotsep mu\hbox{.}\mkern \subsubsection@dotsep mu$}\hfill
\let\subsubsection@tocline\@tocline
\let\@tocline\old@tocline
\let\old@l@subsection\l@subsection
\let\old@l@subsubsection\l@subsubsection
\def\@tocwriteb#1#2#3{%
	\begingroup
	\@xp\def\csname #2@tocline\endcsname##1##2##3##4##5##6{%
		\ifnum##1>\c@tocdepth
		\else \sbox\z@{##5\let\indentlabel\@tochangmeasure##6}\fi}%
	\csname l@#2\endcsname{#1{\csname#2name\endcsname}{\@secnumber}{}}%
	\endgroup
	\addcontentsline{toc}{#2}%
	{\protect#1{\csname#2name\endcsname}{\@secnumber}{#3}}}%
\newlength{\@tocsectionindent}
\newlength{\@tocsubsectionindent}
\newlength{\@tocsubsubsectionindent}
\newlength{\@tocsectionnumwidth}
\newlength{\@tocsubsectionnumwidth}
\newlength{\@tocsubsubsectionnumwidth}
\newcommand{\settocsectionnumwidth}[1]{\setlength{\@tocsectionnumwidth}{#1}}
\newcommand{\settocsubsectionnumwidth}[1]{\setlength{\@tocsubsectionnumwidth}{#1}}
\newcommand{\settocsubsubsectionnumwidth}[1]{\setlength{\@tocsubsubsectionnumwidth}{#1}}
\newcommand{\settocsectionindent}[1]{\setlength{\@tocsectionindent}{#1}}
\newcommand{\settocsubsectionindent}[1]{\setlength{\@tocsubsectionindent}{#1}}
\newcommand{\settocsubsubsectionindent}[1]{\setlength{\@tocsubsubsectionindent}{#1}}
\renewcommand{\l@section}{\section@tocline{1}{\@tocsectionvskip}{\@tocsectionindent}{}{\@tocsectionformat}}%
\renewcommand{\l@subsection}{\subsection@tocline{2}{\@tocsubsectionvskip}{\@tocsubsectionindent}{}{\@tocsubsectionformat}}%
\renewcommand{\l@subsubsection}{\subsubsection@tocline{3}{\@tocsubsubsectionvskip}{\@tocsubsubsectionindent}{}{\@tocsubsubsectionformat}}%
\newcommand{\@tocsectionformat}{}
\newcommand{\@tocsubsectionformat}{}
\newcommand{\@tocsubsubsectionformat}{}
\def\csname toc@1format\endcsname{\@tocsectionformat}
\def\csname toc@2format\endcsname{\@tocsubsectionformat}
\def\csname toc@3format\endcsname{\@tocsubsubsectionformat}
\newcommand{\settocsectionformat}[1]{\renewcommand{\@tocsectionformat}{#1}}
\newcommand{\settocsubsectionformat}[1]{\renewcommand{\@tocsubsectionformat}{#1}}
\newcommand{\settocsubsubsectionformat}[1]{\renewcommand{\@tocsubsubsectionformat}{#1}}
\newlength{\@tocsectionvskip}
\newcommand{\settocsectionvskip}[1]{\setlength{\@tocsectionvskip}{#1}}
\newlength{\@tocsubsectionvskip}
\newcommand{\settocsubsectionvskip}[1]{\setlength{\@tocsubsectionvskip}{#1}}
\newlength{\@tocsubsubsectionvskip}
\newcommand{\settocsubsubsectionvskip}[1]{\setlength{\@tocsubsubsectionvskip}{#1}}
\patchcmd{\tocsection}{\indentlabel}{\makebox[\@tocsectionnumwidth][l]}{}{}
\patchcmd{\tocsubsection}{\indentlabel}{\makebox[\@tocsubsectionnumwidth][l]}{}{}
\patchcmd{\tocsubsubsection}{\indentlabel}{\makebox[\@tocsubsubsectionnumwidth][l]}{}{}
\newcommand{\@sectypepnumformat}{}
\renewcommand{\contentsline}[1]{%
	\expandafter\let\expandafter\@sectypepnumformat\csname @toc#1pnumformat\endcsname%
	\csname l@#1\endcsname}
\newcommand{\@tocsectionpnumformat}{}
\newcommand{\@tocsubsectionpnumformat}{}
\newcommand{\@tocsubsubsectionpnumformat}{}
\newcommand{\setsectionpnumformat}[1]{\renewcommand{\@tocsectionpnumformat}{#1}}
\newcommand{\setsubsectionpnumformat}[1]{\renewcommand{\@tocsubsectionpnumformat}{#1}}
\newcommand{\setsubsubsectionpnumformat}[1]{\renewcommand{\@tocsubsubsectionpnumformat}{#1}}
\renewcommand{\@tocpagenum}[1]{%
	\hfill {\mdseries\@sectypepnumformat #1}}
\let\oldappendix\appendix
\renewcommand{\appendix}{%
	\leavevmode\oldappendix%
	\addtocontents{toc}{%
		\protect\settowidth{\protect\@tocsectionnumwidth}{\protect\@tocsectionformat\sectionname\space}%
		\protect\addtolength{\protect\@tocsectionnumwidth}{2em}}%
}
\let\oldtableofcontents\tableofcontents
\renewcommand{\tableofcontents}{%
	\vspace*{-\linespacing}
	\oldtableofcontents}
\newcommand\R{\mathbb{R}}
\newcommand\CC{\mathbb{C}}
\newcommand\N{\mathbb{N}}
\newcommand{\Int}{\operatorname{int}}
\newcommand{\Pol}{\operatorname{Pol}}
\newcommand{\JC}{\mathfrak{J}}
\newcommand{\IC}{\mathfrak{T}}
\newcommand{\HS}{\mathrm{H}}
\newcommand{\CS}{\mathrm{S}}
\newcommand{\CU}{{}^+\mathrm{S}}
\newcommand{\RU}{\underline{\R}}
\newcommand{\kU}{\underline{k}}
\newcommand{\OmegaU}{{}^+\Omega}
\newcommand{\FC}{\mathcal{F}}
\newcommand{\GC}{\mathcal{G}}
\newcommand{\supp}{\mathrm{supp}\,}
\newcommand{\Id}{\mathrm{id}}
\newcommand{\Lip}{\mathrm{Lip}}
\newcommand{\sm}{\mathrm{sm}}
\newcommand{\res}{\mathrm{res}}
\newcommand{\relint}{\mathrm{relint}\,}
\newcommand{\Ker}{\mathrm{Ker}\,}
\newcommand{\PW}{\mathrm{PPol}}
\newcommand{\HH}{\mathbb{H}}
\newcommand{\pp}{\mathfrak{p}}
\newcommand{\sh}{\mathrm{sh}}
\newcommand{\spec}{\mathrm{spec} _{\mathbb{R}}}
\newcommand{\Aff}{\mathrm{aff}}
\def\colim{\underset{\longrightarrow}{\operatorname{lim\,}}}
\newcommand{\mh}{}
\mathchardef\mh="2D
\DeclareMathOperator{\sheafHom}{\mathcal{H}\kern -1.2pt \mathit{om}}
\newtheorem{theorem}{Theorem}
\newtheorem{proposition}[theorem]{Proposition}
\newtheorem{corollary}[theorem]{Corollary}
\newtheorem{lemma}[theorem]{Lemma}
\newtheorem*{theorem*}{Theorem}
\theoremstyle{definition}
\newtheorem*{definition}{Definition}
\theoremstyle{remark}
\newtheorem*{remark}{Remark}
\title{The de Rham cohomology\\ of soft function algebras}
\author{Igor Baskov}
\thanks{This research was supported by the Ministry of Science and Higher Education of the Russian Federation, agreement 075-15-2019-1620 date 08/11/2019 and 075-15-2022-289 date 06/04/2022.
}
\begin{document}
\tikzset{>={Stealth[scale=1.2]}}
\tikzset{->-/.style={decoration={
			markings,
			mark=at position #1 with {\arrow{>}}},postaction={decorate}}}
\tikzset{-w-/.style={decoration={
			markings,
			mark=at position #1 with {\arrow{Stealth[fill=white,scale=1.4]}}},postaction={decorate}}}
\tikzset{->-/.default=0.65}
\tikzset{-w-/.default=0.65}
\tikzstyle{bullet}=[circle,fill=black,inner sep=0.5mm]
\tikzstyle{circ}=[circle,draw=black,fill=white,inner sep=0.5mm]
\tikzstyle{vertex}=[circle,draw=black,thick,inner sep=0.5mm]
\tikzset{darrow/.style={double distance = 4pt,>={Implies},->},
	darrowthin/.style={double equal sign distance,>={Implies},->},
	tarrow/.style={-,preaction={draw,darrow}},
	qarrow/.style={preaction={draw,darrow,shorten >=0pt},shorten >=1pt,-,double,double
		distance=0.2pt}}

\newcommand{\Addresses}{{
  \bigskip
  \footnotesize
\noindent
  \textsc{St. Petersburg Department of Steklov Mathematical Institute\\of Russian Academy of Sciences}\par\noindent
  \textit{E-mail address}: \texttt{baskovigor@pdmi.ras.ru}
}}

\begin{abstract}
We study the dg-algebra $\Omega ^\bullet _{A|\R}$ of algebraic de Rham forms of a real soft function algebra $A$, i.e., the algebra of global sections of a soft subsheaf of $C_X$, the sheaf of continuous functions on a space $X$.
We obtain a canonical splitting $\HS ^n (\Omega ^\bullet _{A|\R}) \cong \HS ^n (X,\R)\oplus V$, where $V$ is some vector space.
In particular, we consider the cases $A=C(X)$ for $X$ a compact Hausdorff space and $A = C^\infty (X)$ for $X$ a compact smooth manifold.
For the algebra $\PW _K (|K|)$ of piecewise polynomial functions on a polyhedron $K$ the above splitting reduces to a canonical isomorphism $\HS ^* (\Omega ^\bullet _{\PW _K (|K|)|\R}) \cong \HS ^* (|K|,\R)$.
We also prove that the algebraic de Rham cohomology $\HS ^n (\Omega ^\bullet _{C(X)|\R})$ is nontrivial for each $n\geq 1$ if $X$ is an infinite compact Hausdorff space.

\end{abstract}

\maketitle
\tableofcontents
\section{Introduction}

All algebras are assumed to be commutative and all dg-algebras are assumed to be graded-commutative.
To an algebra $A$ over a field $k$ one associates a dg-algebra $\Omega ^\bullet _{A|k}$ with $\Omega ^0 _{A|k} = A$, called the de Rham dg-algebra, in a standard way (see Subsection~\ref{ssec:algebraicderhamforms} or \cite[$\S 3$]{KunzKahlerDifferentials}).
Our main focus will be the cases of $\R$-algebras $A=C(X)$ for a compact Hausdorff space $X$, $A=C^\infty (M)$ for a smooth manifold $M$ (possibly with boundary) and $A=\PW _K (|K|)$, the algebra of piecewise polynomial functions on a polyhedron $K$.
Here all functions are assumed to be real-valued, but our results will hold for complex-valued functions also.
We study the cohomology groups $\HS ^* (\Omega ^\bullet _{A|k})$.

There is a canonical morphism of dg-algebras $\pi :\Omega ^\bullet _{C^\infty (M)|\R}\to \Omega ^\bullet (M)$, where $\Omega ^\bullet (M)$ is the dg-algebra of smooth differential $\R$-forms on $M$ (see Subsection~\ref{sssec:pimorphism}).
The morphism $\pi$ is the identity in degree $0$.
It is not an isomorphism.
For example, the equality $df(t)=(\partial f /\partial t)dt$ holds in $\Omega ^1 _{C^\infty (\R )|\R}$ if and only if  $f(t)$ is an algebraic function of $t$ (\cite[Corollary of Proposition~$1$]{OsbornDerivations}).
Moreover, if $M$ is of dimension $\geq 1$, then the cardinality of any set of generators for $\Omega ^1 _{C^\infty (M)|\R}$ as a $C^\infty (M)$-module is at least that of $\R$ (\cite[Corollary~$15$]{gomez1990number}).
The map on cohomology, induced by $\pi$, is not an isomorphism either, see \cite{OsbornDerivations} for the proof that the closed form $dt/(1+t^2)$ is not exact in $\Omega ^\bullet _{C^\infty (\R )|\R}$.
On the other hand, consider the algebra $A$ of regular functions on a smooth affine variety $V$ over $\CC$.
It is a subalgebra of $C^\infty (V, \CC)$ with inclusion $i:A\hookrightarrow C^\infty (V, \CC)$.
Consider the composition $$\Omega ^\bullet _{A|\CC} \xrightarrow{\Omega _i} \Omega ^\bullet _{C^\infty (V)|\CC}\xrightarrow{\pi _\CC}\Omega ^\bullet (V,\CC ),$$
where $\pi _\CC$ is the analogue of $\pi$ over $\CC$.
Grothendieck's comparison theorem states that the induced map $\HS ^* (\Omega ^\bullet _{A|\CC})\to \HS ^* (\Omega ^\bullet (V,\CC ))$ is an isomorphism (\cite[Theorem~$1^\prime$]{grothendieck1966rham}).

For a soft sheaf of $k$-algebras $\FC$ on a compact Hausdorff space $X$ we construct (see Section~\ref{sec:theoppositemaplambda}) a linear map
$$\Lambda _\FC :\HH ^* (X,\kU _X [0])\to \HS ^* (\Omega ^\bullet _{\FC (X)|k}).$$
Here the domain is the cohomology of $X$ with coefficients in the constant sheaf $\kU _X$.
This map is natural with respect to morphisms of ringed spaces (see Proposition~\ref{prp:lambdaisfunctorial}).
We are mostly interested in the case $k=\R$.

We prove (see Theorem~\ref{thm:thecompositionforsmoothfunctions}) that for the sheaf of smooth functions $C^\infty _M$ on a smooth manifold $M$ the following diagram is commutative:
\begin{equation*}
\begin{tikzcd}
\HH ^* (M,\RU _M [0])\arrow[rr,"\Theta",bend left=10]\arrow[r,"\Lambda _{C^\infty _M}"']&\HS ^*(\Omega ^\bullet _{C ^\infty (M)|\R})\arrow[r,"\HS (\pi )"']&\HS ^*(\Omega ^\bullet (M)),
\end{tikzcd}
\end{equation*}
where $\Theta$ is the canonical isomorphism (see Subsection~\ref{sssec:thetamorphism}).
In particular, the map
$$\HS (\pi ):\HS ^n (\Omega ^\bullet _{C ^\infty (M)|\R})\to \HS ^n (\Omega ^\bullet (M))$$
is surjective for $n\geq 0$.
This is a generalization of a result obtained by Gómez, namely that $\HS (\pi )$ is surjective for $n$ even (see \cite[4.Theorem]{Gomez1992}).

Next, for an arbitrary space $X$ and a subalgebra $i:A\hookrightarrow C(X)$, we construct a linear map (see Section~\ref{sec:theconstructionofpsi})
$$\Psi _A:\HS ^* (\Omega ^\bullet _{A|\R})\to \HH ^* (X,\RU _X [0]).$$
Our construction of $\Psi _A$ relies on local Lipschitz contractibility of algebraic sets (Theorem~\ref{thm:localcontractabilityofsemialgebraicsets}) due to Shartser (\cite[Theorem~$4.18$]{shartser2011rham}).
This map is natural with respect to continuous maps of spaces covered by a homomorphism of algebras (see Proposition~\ref{prp:functorialityofpsiinspaces}); in particular, $\Psi _A=\Psi _{C(X)}\circ\Omega _i$.
We prove (see Theorem~\ref{thm:compositionforsoftsheaf}) that for a compact Hausdorff space $X$ and a soft subsheaf $\FC$ of $C_X$, the sheaf of continuous functions, the composition $\Psi _{\FC (X)} \circ\Lambda _\FC$ coincides with the identity map.
Thus, the groups $\HH ^* (X,\RU _X [0])$ canonically split off of $\HS ^* (\Omega ^\bullet _{\FC (X)})$.

We also check (see Theorem~\ref{thm:compositionproposition}) that for a smooth manifold $M$ the following diagram is commutative:
\begin{equation*}
\begin{tikzcd}
\HS ^* (\Omega ^\bullet _{C^\infty (M)|\R})\arrow[r,"\Psi _{C^\infty (M)}"]\arrow[rd,"\HS (\pi)"']&\HH ^* (M, \RU _M [0])\arrow[d,"\Theta"]\\
&\HS ^* (\Omega ^\bullet (M)).
\end{tikzcd}
\end{equation*}

For the sheaf of piecewise polynomial functions $\PW _K$ on a polyhedron $K$ (see Section~\ref{sec:piecewisepolynomialfunctions}) we prove that the morphisms $\Lambda _{\PW _K}$ and $\Psi _{\PW _K (|K|)}$ are isomorphisms (see Theorem~\ref{thm:lambdaandpsiforpiecewisepolynomialfunctions}).

In Section~\ref{sec:themapspsiandlambdaforsoftfunctionalgebras} we describe the group $\HS ^0 (\Omega ^\bullet _{A|k} )$ for a general function algebra $A$ over a field $k$ of characteristic $0$ (Corollary~\ref{cor:locallyconstantfunctions}) calculated by G{\'o}mez in \cite{gomez1990number}.
The related result is \cite[Proposition~$5$]{OsbornDerivations} (note that the cohomology considered there is, in general, different from ours).
We show that for a soft subsheaf of algebras $\FC\subset C_X$ on a compact Hausdorff space $X$ the morphisms $\Lambda _\FC$ and $\Psi _{\FC (X)}$ are isomorphisms in degree $0$ (Proposition~\ref{prp:lambdaisisomorphismindegree0} and Corollary~\ref{cor:psiisisoindegree0}).
We also prove that for an infinite compact Hausdorff space $X$ the maps $\Lambda _{C_X}: \HH ^n (X,\RU _X [0])\to \HS ^n (\Omega ^\bullet _{C(X)|\R})$ and $\Psi _{C(X)}:\HS ^n (\Omega ^\bullet _{C(X)|\R})\to \HH ^n (X,\RU _X [0])$ are not isomorphisms in degrees $n>0$.
The same is true for the algebra of smooth functions on a smooth manifold, see Subsection~\ref{ssec:lambdaandpsiarenotiso}.

From our results one can deduce the similar results for $\CC$-algebras.

In the paper we only consider algebraic structures, however one can consider topological algebras.
Using projective tensor products, one can define the dg-algebra $\tilde{\Omega} ^\bullet _{\mathcal{A}|\CC}$ for a Fr{\'e}chet $\CC$-algebra $\mathcal{A}$ (denoted by $\Omega ^\bullet _{\mathrm{ab}}\mathcal{A}$ in \cite[$\S 8$]{gracia2013elements}).
This dg-algebra is a topological analogue of the de Rham dg-algebra.

Consider the Fr{\'e}chet algebra $\mathcal{A}=C^\infty (M,\CC )$ for a compact smooth manifold $M$.
The dg-algebra $\tilde{\Omega} ^\bullet _{\mathcal{A}|\CC}$ is isomorphic to $\Omega ^\bullet (M,\CC)$, see \cite[Proposition~$8.1$]{gracia2013elements}.

Consider the Banach algebra $\mathcal{A}=C(X,\CC )$ for a compact Hausdorff space $X$.
Then $\tilde{\Omega} ^n _{\mathcal{A}|\CC} = 0$ for $n\geq 1$.
To see that, first note that by \cite[$\S 8$]{johnson1972cohomology} or \cite[Remarks~$47$, d]{connes1985non}, the continuous Hochschild cohomology $\HS\HS ^n(\mathcal{A},\mathcal{A}^*)$ is zero for positive $n$.
Then, by \cite[Corollary~$1.3$]{johnson1972cohomology} the continuous Hochschild homology of $\mathcal{A}$ is zero in positive degrees, in particular, $\HS\HS _1(\mathcal{A}) = 0$.
By \cite[p.~346]{gracia2013elements}, we have $\HS\HS _1(\mathcal{A})\cong \tilde{\Omega} ^1 _{\mathcal{A}|\CC}$.
Hence, by the construction of $\tilde{\Omega} ^n _{\mathcal{A}|\CC}$ this space is zero for $n\geq 1$.
\subsection*{Acknowledgement}
I would like to thank Dr. Semёn Podkorytov for his patience, numerous fruitful discussions and help drafting this paper.
I am grateful to the St. Petersburg Department of Steklov Mathematical Institute of Russian Academy of Sciences for their financial assistence.

\section{Preliminaries}\label{sec:preliminaries}

\subsection{Sheaves}\label{ssec:sheaves}
Here we outline the basic definitions and facts of sheaf theory needed in the paper.
For this we follow the books of Godement \cite[]{godement1958topologie}, Wedhorn \cite[]{wedhorn2016manifolds} and Bredon \cite{bredon2012sheaf}.

We refer to \cite[Definition~$10.2$]{wedhorn2016manifolds} for the definition of the \textit{hypercohomology} groups $\HH ^* (X,\FC ^\bullet )$ of a complex of sheaves $\FC ^\bullet$ on a space $X$.
By a complex we always mean a non-negative cochain complex.
For a sheaf $\FC$ we denote by $\FC [0]$ the complex of sheaves with $\FC$ in degree $0$ and other terms zero.

Let $\FC ^\bullet$ be a complex of sheaves on $X$.
Then there is a canonical homomorphism
$$\Upsilon :\HS ^* (\FC ^\bullet (X))\to\HH ^* (X,\FC ^\bullet),$$
natural with respect to morphisms of complexes of sheaves.
The map $\Upsilon$ is an isomorphism in degree $0$. 
If the sheaves $\FC ^n$ are acyclic then $\Upsilon$ is an isomorphism, see \cite[Theorem and Definition~$10.4$, Proposition~$10.8$]{wedhorn2016manifolds}.

\begin{lemma}\label{lem:functorialityofhypercohomology}
Take complexes of sheaves $\FC ^\bullet$ and $\GC ^\bullet$ on topological spaces $X$ and $Y$, respectively.
Suppose $f:X\to Y$ is a continuous map and $\varphi :\GC ^\bullet \to f_* \FC ^\bullet$ is a morphism of complexes of sheaves.
Then there is the induced map on hypercohomology $f^* :\HH (Y,\GC ^\bullet )\to \HH (X,\FC ^\bullet )$.
Moreover, the following diagram is commutative:
\begin{equation*}
\begin{tikzcd}
\HH ^*(X,\FC ^\bullet ) & \HS ^* (\FC ^\bullet (X))\arrow[l,"\Upsilon"] \\
\HH ^*(Y,\GC ^\bullet )\arrow[u,"f^*"] & \HS ^* (\GC ^\bullet (Y))\arrow[l,"\Upsilon"]\arrow[u,"\HS (\varphi (Y))"].
\end{tikzcd}
\end{equation*}
\end{lemma}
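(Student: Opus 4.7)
The plan is to construct $f^*$ via the standard machinery of flabby (or injective) resolutions, and then to verify commutativity of the square directly on the level of complexes. I would start by choosing flabby resolutions $\FC^\bullet\to \mathcal{I}^\bullet$ and $\GC^\bullet \to \mathcal{J}^\bullet$ (for example, the Godement canonical resolutions, which exist functorially and whose terms are flabby, hence acyclic). By the construction in \cite{wedhorn2016manifolds}, the hypercohomology groups $\HH^*(X,\FC^\bullet)$ and $\HH^*(Y,\GC^\bullet)$ are respectively the cohomology of the total complexes of $\Gamma(X,\mathcal{I}^\bullet)$ and $\Gamma(Y,\mathcal{J}^\bullet)$, and $\Upsilon$ is induced by the augmentations $\FC^\bullet\to\mathcal{I}^\bullet$ and $\GC^\bullet\to\mathcal{J}^\bullet$ after applying global sections.

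Next, I would use the fact that $f_*$ preserves flabbiness, so that $f_*\mathcal{I}^\bullet$ is a flabby (hence acyclic) resolution of $f_*\FC^\bullet$. The composite
\[
\GC^\bullet \xrightarrow{\varphi} f_*\FC^\bullet \hookrightarrow f_*\mathcal{I}^\bullet
\]
is then a morphism from $\GC^\bullet$ to an acyclic resolution of $f_*\FC^\bullet$. By the comparison theorem for resolutions, this extends (uniquely up to chain homotopy) to a morphism of total complexes $\tilde{\varphi}:\mathcal{J}^\bullet\to f_*\mathcal{I}^\bullet$ lifting $\varphi$ in degree zero (in the resolution direction). Applying $\Gamma(Y,-)$ and using $\Gamma(Y,f_*\mathcal{I}^\bullet)=\Gamma(X,\mathcal{I}^\bullet)$ yields a chain map $\Gamma(Y,\mathcal{J}^\bullet)\to \Gamma(X,\mathcal{I}^\bullet)$, whose effect on cohomology is the sought map $f^*$. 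Independence from the choice of resolutions follows from uniqueness of $\tilde{\varphi}$ up to homotopy together with the standard comparison between any two flabby resolutions.

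For the commutativity of the diagram, the construction already makes this essentially tautological. The square factors through the diagram of complexes of abelian groups
\begin{equation*}
\begin{tikzcd}
\Gamma(X,\mathcal{I}^\bullet) & \Gamma(X,\FC^\bullet)\arrow[l] \\
\Gamma(Y,\mathcal{J}^\bullet)\arrow[u,"\Gamma(\tilde{\varphi})"] & \Gamma(Y,\GC^\bullet)\arrow[l]\arrow[u,"\Gamma(\varphi)"']
\end{tikzcd}
\end{equation*}
which commutes because $\tilde{\varphi}$ was chosen as a lift of $\varphi$ over the respective augmentations. Passing to cohomology recovers the claimed square with $\Upsilon$. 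The main obstacle, and really the only nontrivial ingredient, is the invocation of the comparison theorem for flabby resolutions to produce $\tilde{\varphi}$ and to show $f^*$ does not depend on the resolutions chosen; everything else is bookkeeping. As the lemma is standard and its ingredients are available in the cited references, I would keep the proof short and point to \cite{wedhorn2016manifolds,godement1958topologie,bredon2012sheaf} for the underlying facts.
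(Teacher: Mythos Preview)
Your argument is correct and follows the standard route: functorial Godement (or injective) resolutions, the fact that $f_*$ preserves flabbiness, and the comparison theorem for resolutions together yield both the map $f^*$ and the commutativity of the square. The paper itself gives no argument at all for this lemma; its proof consists of a single reference to the Stacks project. So there is nothing to compare at the level of method: you have supplied an actual proof where the paper only points to the literature, and your proof is the expected one. One small point worth tightening in your write-up is that since $\FC^\bullet$ and $\GC^\bullet$ are complexes rather than single sheaves, the ``resolution'' is really a Cartan--Eilenberg (or termwise Godement) double complex, and the comparison theorem must be invoked in that setting; you implicitly acknowledge this by speaking of total complexes, but it would be clearer to say so explicitly.
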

\begin{proof}
See \cite{stacks-project}.
\end{proof}

For a sheaf $\FC$ on $X$ and a closed subset $Z\subset X$ define
$$\FC (Z):= \underset{\text{open} \, U\supset Z}{\colim} \FC (U).$$

For a sheaf $\FC$ we fix the notation for the restriction map $\res _{W,W^\prime} :\FC (W)\to \FC (W^\prime)$ for sets $W^\prime\subset W$ open or closed.

A sheaf $\FC$ is called \textit{soft} if for every closed set $Z\subset X$ the restriction map $\res _{X,Z}$ is surjective.
Any soft sheaf on a compact Hausdorff space is acyclic \cite[Proposition~$10.17$]{wedhorn2016manifolds}.
A sheaf $\FC$ is called \textit{flabby} if for every open set $U\subset X$ the restriction map $\FC (X)\to \FC (U) $
is surjective. A flabby sheaf is acyclic.

For a presheaf $F$ denote by ${}^+F$ the sheafification of $F$.
Let $\sh :F\to {}^+F$ denote the sheafification map.

\subsection{Algebraic sets}\label{ssec:algebraicsets}

An \textit{algebraic set} in $\R ^m$ is the set of solutions of a system of polynomial equations in $\R ^m$.

\begin{definition}\label{def:realspectrum}
Take $B$ a finitely generated $\R$-algebra.
Following \cite[$3.4$]{nestruev2003smooth}, we define the \textit{real spectrum} $\spec B$ of $B$ as the set of algebra homomorphisms $\psi :B\to \R$.
\end{definition}
A set of generators $b_1,\dots ,b_n$ of $B$ gives rise to an injective map $i:\spec B\hookrightarrow \R ^m$, $i(\psi ):=(\psi (b_1),\dots ,\psi (b_n))$.
We call such a map a \textit{distinguished embedding}.
The image of a distinguished embedding is an algebraic set.
Equip $\spec B$ with the induced topology under some distinguished embedding.
This topology does not depend on the choice of a distinguished embedding because
for any two distinguished embeddings $i:\spec B\hookrightarrow \R ^m$ and $j:\spec B\hookrightarrow \R ^l$ there exists a polynomial map $p:\R ^m\to \R ^l$ such that the following diagram is commutative
\begin{equation*}
\begin{tikzcd}
\spec B\arrow[r,"i"]\arrow[rd,"j"]&\R ^m\arrow[d,"p"]\\
&\R ^l.
\end{tikzcd}
\end{equation*}
A polynomial map is smooth, hence, we can talk of maps into $\spec B$ being smooth or locally Lipschitz.
Hence, $\spec B$ becomes a \textit{predifferentiable space} (see \cite[$\S 1$]{chenpredifferentiable} for the definition).

If $\varphi:B^\prime\to B$ is a homomorphism of finitely generated $\R$-algebras, then define $\spec \varphi:\spec B \to \spec B^\prime$ as $(\spec \varphi) (\psi ):= \psi\circ\varphi$.
For $b\in B$ define the function $\hat{b}\in C(\spec B)$ by $\hat{b} (\psi ):= \psi (b)$.
We have $\widehat{\varphi(c)}=\widehat{c}\circ (\spec \varphi)$ in $C(\spec B)$ for $c\in B^\prime$.

Now, let's assume that $B$ is a finitely generated $\R$-subalgebra of $C(X)$ for some topological space $X$.
To each point $x\in X$ we associate the homomorphism $B\to \R$, $b\mapsto b(x)$.
We get a continuous map $\Gamma _B :X\to \spec B$.
If $M$ is a smooth manifold and $B$ is a finitely generated subalgebra of $C^\infty (M)$ then $\Gamma _B$ is smooth.

For $b\in B$ we have
\begin{equation}\label{eq:hatgamma}
\hat{b}\circ \Gamma _B =b.
\end{equation}

Take $f:X\to Y$ a continuous map of topological spaces.
Suppose that $B\subset C(X)$ and $B^\prime\subset C(Y)$ are finitely generated subalgebras and $\varphi :B^\prime\to B$ is a homomorphism such that the following diagram is commutative:
\begin{equation*}
\begin{tikzcd}
C(X)&B\arrow[l,hook']\\
C(Y)\arrow[u,"f^*"]&B^\prime \arrow[l,hook']\arrow[u,"\varphi"].
\end{tikzcd}
\end{equation*}
Then the following diagram is commutative:
\begin{equation*}
\begin{tikzcd}
X \arrow[r,"\Gamma _B"]\arrow[d,"f"] & \spec B \arrow[d,"\spec \varphi"] \\
Y \arrow[r,"\Gamma _{B^\prime}"] & \spec B^\prime
.\end{tikzcd}
\end{equation*}

\subsection{Sheaves of singular cochains}\label{ssec:sheavesofsingularcochains}

For a topological space $X$ let $\CS _{n} (X)$ denote the space of singular $\R$-chains in $X$ and let $\CS ^n (X)$ denote the dual space, the space of singular cochains.
We will need the following presheaves $\CS ^n _X $ for $n\geq 0$:
$$U\mapsto \CS ^n (U).$$
The standard differential on cochains gives rise to a complex of presheaves $\CS ^\bullet _X$ on $X$.
For a smooth manifold $M$ (possibly with boundary) let $\CS _{\sm ,n} (M)$ denote the space of smooth singular $\R$-chains in $M$ and let $\CS ^n _{\sm}(M)$ denote the dual space.
We will need the following presheaves $\CS ^n _{\sm ,M} $ for $n\geq 0$:
$$U\mapsto \CS ^n _{\sm} (U).$$
The standard differential on cochains gives rise to a complex of presheaves $\CS ^\bullet _{\sm ,M}$ on $M$.
More generally, the complex of presheaves $\CS ^\bullet _{\sm ,Q}$ is well defined for $Q$ a predifferentiable space and functorial with respect to smooth maps between predifferentiable spaces.
In particular, $\CS ^\bullet _{\sm ,\spec B}$ are well defined.

For a set $V\subset \R ^m$ let $\CS _{\Lip ,n} (V)$ denote the space of Lipschitz singular $\R$-chains in $V$ and let $\CS ^n _{\Lip}(V)$ denote the dual space.
We will need the following presheaves $\CS ^n _{\Lip ,V} $ for $n\geq 0$:
$$U\mapsto \CS ^n _{\Lip} (U).$$
The standard differential on cochains gives rise to a complex of presheaves $\CS ^\bullet _{\Lip ,V}$ on $V$.
The complex of presheaves $\CS ^\bullet _{\Lip ,\spec B}$ is well defined and functorial with respect to algebra homomorphisms.

The sheaves $\CU ^n _X$ are flabby by \cite[proof of Theorem~$11.13$]{wedhorn2016manifolds} and, hence, are acyclic.
Similarly, one can prove that the sheaves $\CU ^n _{\sm ,Q}$ and $\CU ^n _{\Lip ,V}$ are flabby.

For the complex of sheaves $\CU ^\bullet _X$ we define the morphism of complexes of sheaves, called coaugmentation, $\epsilon :\RU _X [0]\to \CU ^\bullet _X$ by $\epsilon (1):=1$.
For the complexes $\CU ^\bullet _{\sm ,Q}$ and $\CU ^\bullet _{\Lip ,V}$ the coaugmentation is defined similarly.

\begin{lemma}\label{lem:globalsectionssheafificationsingularcochains}
\mbox{}
\begin{enumerate}
\item
For a topological space $X$ the morphism of complexes
$$\sh :\CS ^\bullet _X (X)\to \CU ^\bullet _X (X)$$
is a quasi-isomorphism.
\item
For a predifferentiable space $Q$ the morphism of complexes
$$\sh :\CS ^\bullet _{\sm ,Q} (Q)\to \CU ^\bullet _{\sm ,Q} (Q)$$
is a quasi-isomorphism.
\item
For a subspace $V\subset\R ^m$ the morphism of complexes
$$\sh :\CS ^\bullet _{\Lip ,V} (V)\to \CU ^\bullet _{\Lip ,V} (V)$$
is a quasi-isomorphism.
\end{enumerate}
\end{lemma}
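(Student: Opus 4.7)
My plan: All three assertions follow from the same mechanism, so I would describe the argument for (1); parts (2) and (3) proceed identically once one checks that the barycentric subdivision operator and its accompanying chain homotopy preserve smoothness and Lipschitz continuity, which they do because they are built from affine inclusions of subsimplices into $\Delta^n$ (affine maps are both smooth and Lipschitz, so post-composition with a smooth, resp.\ Lipschitz, simplex retains that regularity).

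The central observation is a concrete description of the global sections of the sheafification as a filtered colimit of spaces of ``small cochains'':
\[
\CU ^n (X) = {}^+\CS ^n _X (X) \;\cong\; \colim _{\mathcal{U}} \CS ^n _{\mathcal{U}}(X),
\]
where $\mathcal{U}$ ranges over open covers of $X$ ordered by refinement, and $\CS ^n _{\mathcal{U}}(X)$ is the space of $\R$-valued functions on the set of $\mathcal{U}$-small singular $n$-simplices (those whose image lies in some member of $\mathcal{U}$). Under this identification, $\sh$ becomes the canonical map from $\CS ^n (X)$ (functions on \emph{all} simplices) to the colimit, given by restricting to small simplices. To verify the identification I would unpack the usual definition of ${}^+ \CS ^n _X$ as compatible germs: a global section is represented by data $(U_i , c_i )$ with $c_i \in \CS ^n (U_i)$ and $(U_i)$ an open cover of $X$, subject to the compatibility $[c_i ]_y = [c_j ]_y$ in the stalk for each $y\in U_i \cap U_j$. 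Refining $(U_i)$ so that every such overlap witnesses agreement of $c_i$ and $c_j$ on a single open set covering it yields a well-defined function on small simplices, and conversely every such function produces a compatible family by restriction.

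With this identification in hand, for each fixed $\mathcal{U}$ the surjection $\CS ^\bullet (X) \twoheadrightarrow \CS ^\bullet _{\mathcal{U}}(X)$ is the $\R$-dual of the inclusion $\CS _{\bullet ,\mathcal{U}}(X)\hookrightarrow \CS _\bullet (X)$ of the subcomplex of $\mathcal{U}$-small chains, which is a chain homotopy equivalence by the classical small chain theorem (iterated barycentric subdivision together with the standard chain homotopy to the identity). Dualization preserves chain homotopy equivalences, so each structural map of the directed system is a cochain homotopy equivalence. Since filtered colimits of cochain complexes of $\R$-vector spaces are exact and therefore commute with cohomology, the map $\CS ^\bullet (X)\to \colim _{\mathcal{U}}\CS ^\bullet _{\mathcal{U}}(X)=\CU ^\bullet (X)$ induced on the colimit is itself a quasi-isomorphism.

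The main obstacle is the identification $\CU ^n (X) \cong \colim _{\mathcal{U}}\CS ^n _{\mathcal{U}}(X)$: this demands careful bookkeeping in the stalk-level definition of sheafification, especially because $\CS ^n _X$ is typically not even a separated presheaf (large simplices are invisible to the sheafification). Once this is in place, the proof reduces to the standard barycentric-subdivision argument, and (2)--(3) need only the routine observation that subdivision respects the extra regularity of smooth, resp.\ Lipschitz, simplices.
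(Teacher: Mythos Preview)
Your proposal is correct and follows essentially the same route as the reference the paper defers to (Bredon, \emph{Sheaf Theory}, I.7): the presheaf $\CS^n_X$ is conjunctive (it satisfies gluing, though not separation), so $\CU^n_X(X)$ is the quotient of $\CS^n(X)$ by the locally-zero cochains, which is your colimit description; acyclicity of that kernel---equivalently, your statement that each $\CS^\bullet(X)\to\CS^\bullet_{\mathcal U}(X)$ is a homotopy equivalence---is the subdivision argument. Your remark that the subdivision operator and its chain homotopy are built from affine face inclusions, hence preserve smoothness and the Lipschitz property, is exactly what is needed for (2) and (3).
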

See \cite[$7,$ p. $26$]{bredon2012sheaf} for the proof of the first case.
For the other two a similar proof applies.

\begin{lemma}\label{lem:singularcochainsquasiisomorphism}
\mbox{}
\begin{enumerate}
\item
For a locally contractible $X$ the coaugmentation $\epsilon :\RU _X[0] \to \CU ^\bullet _X$ is a quasi-isomorphism.
\item
For a smooth manifold $M$ the coaugmentation $\epsilon :\RU _M[0] \to \CU ^\bullet _{\sm ,M}$ is a quasi-isomorphism.
\item
For a locally Lipschitz contractible set $V\subset\R ^m$ the coaugmentation $\epsilon :\RU _V[0] \to \CU ^\bullet _{\Lip ,V}$ is a quasi-isomorphism.
\end{enumerate}
\end{lemma}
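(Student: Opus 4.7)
The plan is to verify quasi-isomorphism stalkwise and use the three local contractibility hypotheses to reduce to a Poincaré lemma on each stalk, applied in the appropriate category (topological, smooth, Lipschitz).

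First I would recall that for any presheaf $F$ and any point $x$, the stalk $({}^+F)_x$ equals the presheaf stalk $F_x$. Hence in case (1) we have $(\CU^n_X)_x = \colim_{U\ni x}\CS^n(U)$, and similarly in cases (2) and (3). Since filtered colimits are exact, the cohomology of the stalk complex is the colimit of cohomologies $\colim_{U\ni x}\HS^n(\CS^\bullet(U))$. The map $\epsilon$ on stalks is then identified with the inclusion $\R\hookrightarrow \colim_{U\ni x}\CS^0(U)$ by constants, so the lemma reduces to showing that this colimit is $\R$ concentrated in degree $0$, and analogously in the smooth and Lipschitz cases.

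Second, I would use the local hypothesis to extract a cofinal system of ``nice'' neighborhoods: contractible opens for (1), smoothly contractible opens (e.g., convex coordinate balls) for (2), and Lipschitz contractible neighborhoods for (3). On any such neighborhood $U$ the cohomology of the corresponding cochain complex is $\R$ in degree $0$ and vanishes otherwise, via the standard cone chain homotopy built from a contraction $H\colon U\times I\to U$ of the appropriate regularity. Cofinality then yields the desired computation of the colimit, finishing the argument.

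The only step that needs real care is verifying that the cone construction stays inside the smooth (resp.\ Lipschitz) category; the topological case in (1) is classical. For (2), if $H$ is smooth and $\sigma\colon\Delta^n\to U$ is a smooth simplex, then the standard subdivision of the prism $\Delta^n\times[0,1]$ into $(n+1)$-simplices composed with $H\circ(\sigma\times\id)$ produces smooth simplices, giving a smooth chain homotopy, and dualizing yields the cochain homotopy. For (3) the analogous argument works once one observes that the composition of Lipschitz maps is Lipschitz and that the linear subdivision of $\Delta^n\times[0,1]$ is piecewise linear, hence Lipschitz, so the prism operator sends Lipschitz chains to Lipschitz chains. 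This regularity check is the main, though routine, obstacle; once in place, the stalkwise argument uniformly closes all three cases.
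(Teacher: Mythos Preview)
Your argument is correct and is precisely the standard one; the paper does not actually give a proof but simply refers to Bredon, \emph{Sheaf Theory}, Example~II.1.2, for case~(1) and declares cases~(2) and~(3) analogous. Your stalkwise reduction together with the prism-operator Poincar\'e lemma is exactly the content of that reference, and you have correctly isolated the only point requiring care, namely that the cone construction preserves smooth (resp.\ Lipschitz) regularity.
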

See \cite[Example~II.$1.2$]{bredon2012sheaf} for the case of $\CU ^\bullet _X$, the rest is analogous.

An important result needed in this work is the local Lipschitz contractability of algebraic sets due to Shartser \cite[Theorem~$4.1.8$]{shartser2011rham}.

\begin{theorem}\label{thm:localcontractabilityofsemialgebraicsets}
Take an algebraic set $V\subset \R ^m$ and a point $v_0\in V$.
Then there exist an open set $U\subset \R^ m$ with $v_0\in U$ and a Lipschitz map $F:U\times [0,1]\to U$ such that
\begin{enumerate}
\item
$F(u,0)=u$ for $u\in U$;
\item
$F(u,1)=v_0$ for $u\in U$;
\item
$F(v,t)\in V$ for all $v\in V\cap U$ and $t\in [0,1]$.
\end{enumerate}
\end{theorem}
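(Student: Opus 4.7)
The plan is to construct the required Lipschitz deformation by reducing the problem, via a local bi-Lipschitz model, to a routine simplicial contraction.

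First, I would invoke a semialgebraic bi-Lipschitz triangulation theorem near $v_0$, such as the one established by Mostowski and Parusinski (or its sharpening by Valette). Applied to the pair $(\R^m,V)$ near $v_0$, this yields a finite simplicial complex $K$ (embedded in some $\R^N$) having the origin as a vertex, a subcomplex $L\subseteq K$ also containing the origin, and a semialgebraic bi-Lipschitz homeomorphism $\phi$ from an open neighborhood $W$ of the origin in $|K|$ onto an open neighborhood $U\subset \R^m$ of $v_0$, sending $|L|\cap W$ to $V\cap U$ and $0$ to $v_0$.

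Next, I would shrink $W$ so that it is the intersection of $|K|$ with a small convex open ball about the origin. On such a $W$, the straight-line map $G:W\times [0,1]\to W$, $G(x,t):=(1-t)x$, is $1$-Lipschitz, satisfies $G(x,0)=x$ and $G(x,1)=0$, and preserves $|L|$: every closed simplex of $K$ that meets this shrunken $W$ must have the origin as one of its vertices and is therefore closed under scaling by $[0,1]$, and the same applies to the simplices of $L$. Setting $F:=\phi\circ G\circ(\phi^{-1}\times \id_{[0,1]})$ then produces a Lipschitz map on $U\times[0,1]$ (by the bi-Lipschitz nature of $\phi$), and the three required conditions follow directly from the analogous properties of $G$ together with $\phi(0)=v_0$ and $\phi(|L|\cap W)=V\cap U$.

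The substantive difficulty lies entirely in the first step: a classical semialgebraic triangulation in the sense of Lojasiewicz yields only a topological homeomorphism, not a bi-Lipschitz one, and the upgrade requires genuinely new geometric input such as regular projections, $L$-regular stratifications, or pancake decompositions. Shartser's own proof bypasses any direct appeal to a general bi-Lipschitz triangulation theorem, constructing the deformation inductively on the ambient dimension using carefully chosen regular projections, but the technical heart of the argument is essentially the same, and this is where I would expect to spend the bulk of the effort.
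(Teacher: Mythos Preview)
The paper does not give its own proof of this theorem: it is stated as Shartser's result and used as a black box (the only consequence drawn is Corollary~\ref{cor:lipschitzcochainsresolution}). So there is no in-paper argument to compare your proposal against.

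Your outline is nonetheless a sound alternative route. Reducing to a straight-line contraction in a simplicial model via a local semialgebraic bi-Lipschitz triangulation of the pair $(\bar B, V\cap\bar B)$ (Parusi\'nski, later Valette) is exactly the kind of argument one would expect, and once such a triangulation is available the open-star contraction you describe goes through verbatim. One small point worth making explicit: to guarantee that $\phi^{-1}(v_0)$ is a \emph{vertex} of $K$ (so that every simplex meeting your shrunken $W$ genuinely has the origin as a vertex), you should include the singleton $\{v_0\}$ among the semialgebraic subsets respected by the triangulation; since $v_0\in V$ and $V$ is closed, $L$ is then automatically a subcomplex containing that vertex, and your preservation claim for $|L|$ follows. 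You are also right that Shartser's original proof does not quote a general bi-Lipschitz triangulation theorem but instead builds the deformation by induction on the ambient dimension through regular projections; the geometric content is cognate, but your packaging trades a longer self-contained induction for a single heavy citation.
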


\begin{corollary}\label{cor:lipschitzcochainsresolution}
The coaugmentation $\epsilon :\RU _V [0]\to \CU ^\bullet _{\Lip ,V}$ is a quasi-isomorphism for $V$ an algebraic set in $\R ^m$.
\end{corollary}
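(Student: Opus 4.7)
The plan is to deduce this immediately from the two previously stated ingredients: Lemma~\ref{lem:singularcochainsquasiisomorphism}(3) yields the desired quasi-isomorphism for any locally Lipschitz contractible subset of $\R^m$, so the only task is to promote the conclusion of Theorem~\ref{thm:localcontractabilityofsemialgebraicsets} into the statement that an algebraic set $V \subset \R^m$ is itself locally Lipschitz contractible in the sense required by that lemma.

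Concretely, I would fix an arbitrary $v_0 \in V$ and apply Theorem~\ref{thm:localcontractabilityofsemialgebraicsets} to produce an open neighborhood $U \subset \R^m$ of $v_0$ together with a Lipschitz map $F : U \times [0,1] \to U$ satisfying properties (1)--(3). Set $W := V \cap U$, which is an open neighborhood of $v_0$ in $V$. By property (3), $F$ maps $W \times [0,1]$ into $V$, and since $F$ already maps into $U$, it in fact maps $W \times [0,1]$ into $V \cap U = W$. Thus the restriction $F|_{W \times [0,1]} : W \times [0,1] \to W$ is a Lipschitz map which, by properties (1) and (2), equals the identity on $W$ at time $0$ and the constant map to $v_0$ at time $1$. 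This is exactly a Lipschitz contraction of the neighborhood $W$ of $v_0$ to $v_0$.

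Since $v_0$ was arbitrary, $V$ is locally Lipschitz contractible, and Lemma~\ref{lem:singularcochainsquasiisomorphism}(3) then gives the desired quasi-isomorphism $\epsilon : \RU_V[0] \to \CU^\bullet_{\Lip, V}$. There is no real obstacle here: all the geometric content lives in Shartser's theorem, and the corollary is essentially its translation into sheaf-theoretic language, so the only thing to be careful about is that the contracting homotopy supplied by Theorem~\ref{thm:localcontractabilityofsemialgebraicsets} does take values inside $V$ (and not merely inside the ambient $U$) after restriction, which is precisely what property (3) guarantees.
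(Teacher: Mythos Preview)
Your proposal is correct and follows exactly the paper's approach: the paper's proof is the single line ``Follows directly from Lemma~\ref{lem:singularcochainsquasiisomorphism} and Theorem~\ref{thm:localcontractabilityofsemialgebraicsets},'' and you have simply spelled out the (straightforward) verification that the homotopy from Theorem~\ref{thm:localcontractabilityofsemialgebraicsets} restricts to a Lipschitz contraction of $W = V \cap U$ inside $V$, so that Lemma~\ref{lem:singularcochainsquasiisomorphism}(3) applies.
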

\begin{proof}
Follows directly from Lemma~\ref{lem:singularcochainsquasiisomorphism} and Theorem~\ref{thm:localcontractabilityofsemialgebraicsets}.
\end{proof}

\subsection{The de Rham dg-algebra $\Omega ^\bullet _{A|k}$ of $A$}\label{ssec:algebraicderhamforms}

To a $k$-algebra $A$ one associates the dg-algebra $\Omega ^\bullet _{A|k}$ (\cite[Theorem~$3.2$]{KunzKahlerDifferentials}) with $\Omega ^0 _{A|k} = A$.
It has the following universal property: for any dg-algebra $E$ and any algebra homomorphism $f:A\to E^0$ there exists a unique morphism of dg-algebras $F:\Omega ^\bullet _{A|k}\to E$ such that $F|_A = f$:
\begin{equation*}
\begin{tikzcd}
A\arrow[r]\arrow[rd,"f"]&\Omega ^\bullet _{A|k}\arrow[d,dashed,"F"]\\
&E.
\end{tikzcd}
\end{equation*}
The elements of $\Omega ^n _{A|k}$ are called algebraic $n$-forms.
The dg-algebra $\Omega ^\bullet _{A|k}$ is covariant in the algebra $A$.

\begin{lemma}\label{lem:kernelofinducedmapofomegas}
Suppose $A$ and $B$ are $k$-algebras and $\varphi:A\to B$ is a surjective homomorphism of algebras.
Then the induced morphism $\Omega _\varphi:\Omega ^\bullet _{A|k}\to\Omega ^\bullet _{B|k}$ is surjective and its kernel is the ideal of $\Omega ^\bullet _{A|k}$ generated by $\Ker \varphi$ and $d(\Ker \varphi)$.
\end{lemma}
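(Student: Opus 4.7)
The plan is to handle surjectivity first by appeal to the explicit generating set of $\Omega^\bullet_{A|k}$, and then identify the kernel via the universal property of the de Rham dg-algebra applied to a quotient.

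For surjectivity, I would start from the standard fact (a direct consequence of the universal property) that $\Omega^\bullet_{A|k}$ is generated as a dg-algebra by $\Omega^0_{A|k}=A$, so every element of $\Omega^n_{B|k}$ can be written as a finite sum of products of the form $b_0\,db_1\cdots db_n$ with $b_i\in B$. Since $\varphi$ is surjective, choose lifts $a_i\in A$ with $\varphi(a_i)=b_i$; then $\Omega_\varphi(a_0\,da_1\cdots da_n)=b_0\,db_1\cdots db_n$ because $\Omega_\varphi$ is a dg-morphism restricting to $\varphi$ on $A$. This gives surjectivity in each degree.

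Now let $I=\Ker\varphi$ and let $J\subset\Omega^\bullet_{A|k}$ be the (graded) ideal generated by $I\cup d(I)$. A short check using the graded Leibniz rule and $d^2=0$ shows $d(x\cdot i)=dx\cdot i\pm x\cdot di\in J$ and $d(x\cdot di)=dx\cdot di\in J$ for $i\in I$, so $J$ is in fact a dg-ideal. The inclusion $J\subset\Ker\Omega_\varphi$ is immediate: $\Omega_\varphi$ kills $I$ and, being a chain map, also $d(I)$.

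For the reverse inclusion, I would invoke the universal property. The quotient $\Omega^\bullet_{A|k}/J$ is a dg-algebra; since $I\subset J$, the composition $A\to\Omega^\bullet_{A|k}\to\Omega^\bullet_{A|k}/J$ factors through $\varphi$, producing an algebra homomorphism $B=A/I\to(\Omega^\bullet_{A|k}/J)^0$. The universal property of $\Omega^\bullet_{B|k}$ then yields a unique dg-morphism $\widetilde{\Omega}:\Omega^\bullet_{B|k}\to\Omega^\bullet_{A|k}/J$ extending it. Both $\widetilde{\Omega}\circ\Omega_\varphi$ and the canonical projection $\Omega^\bullet_{A|k}\to\Omega^\bullet_{A|k}/J$ are dg-morphisms agreeing on $A$, so by the uniqueness clause of the universal property of $\Omega^\bullet_{A|k}$ they coincide. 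Hence
\begin{equation*}
\Ker\Omega_\varphi\subset\Ker\bigl(\Omega^\bullet_{A|k}\to\Omega^\bullet_{A|k}/J\bigr)=J,
\end{equation*}
completing the proof. The only non-formal step is verifying that $J$ is a dg-ideal; once that routine Leibniz-rule computation is done, everything else is a formal manipulation of universal properties, so I do not expect a genuine obstacle.
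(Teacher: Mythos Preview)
Your proof is correct and follows essentially the same approach as the paper: both form the dg-ideal $J$ generated by $\Ker\varphi$ and $d(\Ker\varphi)$, produce a dg-morphism $\Omega^\bullet_{B|k}\to\Omega^\bullet_{A|k}/J$ via the universal property, and compare with the projection using the universal property of $\Omega^\bullet_{A|k}$. The only minor difference is organizational: you establish surjectivity directly from the generators $b_0\,db_1\cdots db_n$ and then show $\Ker\Omega_\varphi\subset J$ by the one-sided identity $\widetilde{\Omega}\circ\Omega_\varphi=\pi$, whereas the paper constructs both maps between $\Omega^\bullet_{B|k}$ and $\Omega^\bullet_{A|k}/J$ and verifies they are mutually inverse, obtaining surjectivity as a consequence.
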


\begin{proof}
Set $T:=\Ker \varphi$, then it is enough to consider the case $B=A/T$ and $\varphi :A\to B$ being the canonical projection.
Take the ideal $I$ of $\Omega ^\bullet _{A|k}$ generated by $T$ and $dT$.
$I$ is a dg-ideal, hence, $\Omega ^\bullet _{A|k} /I$ is a dg-algebra and the canonical projection $\pi :\Omega ^\bullet _{A|k} \to \Omega ^\bullet _{A|k}/I$ is a morphism of dg-algebras.
By the universal property there exists a unique morphism of dg-algebras $\mathcal{L}:\Omega ^\bullet _{B|k}\to \Omega ^\bullet _{A|k}/I$ such that $\mathcal{L}|_{B}$ is the identity map.
On the other hand, the kernel of $\Omega _\varphi$ contains $T$ and $dT$.
Therefore, the morphism $\Omega _\varphi$ decomposes as the composition of dg-algebra morphisms $\Omega ^\bullet _{A|k}\xrightarrow{\pi}\Omega ^\bullet _{A|k}/I\xrightarrow{\mathcal{M}} \Omega ^\bullet _{B|k}$.
The composition
$$\Omega ^\bullet _{B|k}\xrightarrow{\mathcal{L}} \Omega ^\bullet _{A|k}/I\xrightarrow{\mathcal{M}} \Omega ^\bullet _{B|k}$$
is clearly the identity map by the universal property of $\Omega ^\bullet _{B|k}$.
Consider the diagram
\begin{equation*}
\begin{tikzcd}
\Omega ^\bullet _{A|k}\arrow[rr,"\pi",bend left =20]\arrow[r,"\Omega _f"']\arrow[rd,"\pi"']&\Omega ^\bullet _{B|k}\arrow[r,"\mathcal{L}"']&\Omega ^\bullet _{A|k}/I\\
&\Omega ^\bullet _{A|k}/I\arrow[u,"\mathcal{M}"']\arrow[ru,"="'].&
\end{tikzcd}
\end{equation*}
The projection map $\pi$ is surjective and the left triangle is clearly commutative.
The upper triangle is commutative by the universal property of $\Omega ^\bullet _{A|k}$.
Hence, the right triangle is commutative.
Therefore, the morphisms $\mathcal{L}$ and $\mathcal{M}$ are mutually inverse.
As the morphism $\pi$ is surjective and its kernel is generated by $T$ and $dT$, we have $\Omega _f$ is surjective and its kernel is generated by $T$ and $dT$.
\end{proof}

\begin{lemma}\label{lem:kahlerformspreservecolimits}
Suppose a $k$-algebra $A$ is the filtered colimit of $k$-algebras $A_i$.
Then $\Omega ^n _{A|k} \cong \colim \Omega ^n _{A_i|k}$.
\end{lemma}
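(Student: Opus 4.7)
The plan is to exhibit $\Omega^\bullet_{A|k}$ as the filtered colimit, in the category of graded-commutative dg-$k$-algebras, of the system $\{\Omega^\bullet_{A_i|k}\}$, and to deduce the degreewise statement from the general fact that filtered colimits in that category are computed degreewise as vector spaces. Write $C := \colim \Omega^\bullet_{A_i|k}$ for this dg-algebra colimit. Because the indexing category is filtered, $C^n = \colim \Omega^n_{A_i|k}$ as a $k$-vector space, and the multiplication and differential on $C$ are induced componentwise from those on the $\Omega^\bullet_{A_i|k}$. In particular $C^0 = \colim A_i = A$.

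Next, I would construct morphisms between $C$ and $\Omega^\bullet_{A|k}$ in both directions. The structure morphisms $\iota_i : A_i \to A$ induce, by functoriality, compatible morphisms of dg-algebras $\Omega_{\iota_i} : \Omega^\bullet_{A_i|k} \to \Omega^\bullet_{A|k}$, which by the universal property of the colimit assemble into a morphism of dg-algebras
\[
\Psi : C \longrightarrow \Omega^\bullet_{A|k}.
\]
Conversely, the identity algebra homomorphism $A \to A = C^0$ extends, by the universal property of $\Omega^\bullet_{A|k}$ recalled above, to a unique morphism of dg-algebras
\[
\Phi : \Omega^\bullet_{A|k} \longrightarrow C.
\]

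Now verify $\Psi$ and $\Phi$ are mutually inverse. The composition $\Psi \circ \Phi : \Omega^\bullet_{A|k} \to \Omega^\bullet_{A|k}$ restricts in degree $0$ to the identity on $A$, so by the uniqueness clause in the universal property of $\Omega^\bullet_{A|k}$ it equals the identity. For $\Phi \circ \Psi : C \to C$, precompose with the canonical morphism $\kappa_i : \Omega^\bullet_{A_i|k} \to C$ to get a dg-algebra morphism whose degree-zero part is $A_i \to A = C^0$, i.e.\ the degree-zero part of $\kappa_i$; by the universal property of $\Omega^\bullet_{A_i|k}$ this composition equals $\kappa_i$. Since the $\kappa_i$ jointly cover $C$ (as it is the filtered colimit), $\Phi \circ \Psi = \id_C$. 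Passing to degree $n$, we obtain the desired isomorphism $\Omega^n_{A|k} \cong \colim \Omega^n_{A_i|k}$.

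The only nonformal ingredient is the claim that a filtered colimit of graded-commutative dg-algebras is computed degreewise in vector spaces, i.e.\ that the underlying graded-vector-space colimit inherits a well-defined multiplication and differential making it the colimit in dg-algebras; I expect this to be the main (but routine) thing to argue, relying on filteredness to represent any finite collection of elements inside a single $\Omega^\bullet_{A_i|k}$ on which multiplication and $d$ are already defined and compatible with transition maps.
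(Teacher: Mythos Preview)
Your argument is correct. The paper itself does not prove this lemma; it simply cites \cite[Proposition~4.1]{KunzKahlerDifferentials}. Your proof is the standard direct verification that the functor $A \mapsto \Omega^\bullet_{A|k}$, being left adjoint to the ``degree-zero'' functor from graded-commutative dg-$k$-algebras to commutative $k$-algebras, preserves colimits, specialized to the filtered case so that the colimit can be read off degreewise. The one point you flag as nonformal---that filtered colimits of graded-commutative dg-algebras are computed degreewise in $k$-vector spaces---is indeed routine and is exactly what is needed; your sketch (represent any finite collection of elements in a single $\Omega^\bullet_{A_i|k}$ using filteredness) is the right justification.
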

The proof can be found in \cite[Proposition~$4.1$]{KunzKahlerDifferentials}.

We can generalize the notion of algebraic forms to the case of sheaves.
For a topological space $X$ and a sheaf of $k$-algebras $\FC$ on $X$ consider the following presheaf $\Omega ^n _{\FC|k}$ of $\FC$-modules:
$$U\mapsto \Omega ^n _{\FC(U)|k}.$$
For every open $U$ we obtain the complex $\Omega ^\bullet _{\FC(U)|k}$ and, hence, we have the complex of presheaves $\Omega ^\bullet _{\FC |k}$.
The associated sheaves form the complex of sheaves $\OmegaU ^\bullet _{\FC |k}$.

For sets $W^\prime\subset W$ open or closed we fix the notations for the restriction map $\res _{W,W^\prime} :\Omega ^\bullet _{\FC (W)|k} \to \Omega ^\bullet _{\FC (W^\prime)|k}$ and $\res _{W,W^\prime} :\OmegaU ^\bullet _{\FC|k} (W)\to \OmegaU ^\bullet _{\FC|k} (W^\prime)$ induced by the restriction map $\res _{W,W^\prime} :\FC (W)\to \FC (W^\prime)$.

Consider the morphism of complexes of presheaves $\R _X [0]\to \Omega ^\bullet _{\FC |k}$, $1\mapsto 1$, where $\R _X$ is the constant presheaf.
The sheafification functor gives rise to a morphism of complexes of sheaves, the coaugmentation, $\epsilon :\RU _X [0]\to \OmegaU ^\bullet _{\FC |k}$.
\begin{lemma}\label{lem:sheavesomegaaresoft}
For a soft sheaf of algebras $\FC$ on $X$ the sheaves $\OmegaU ^n _{\FC |k}$ are soft.
\end{lemma}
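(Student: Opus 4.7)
The plan is to reduce the claim to the classical fact that every sheaf of modules over a soft sheaf of (unital, commutative) rings is itself soft; this is a standard result in sheaf theory (see, for instance, \cite{bredon2012sheaf} or \cite{godement1958topologie}). First, I would verify that $\OmegaU^n_{\FC|k}$ carries a natural structure of a sheaf of modules over $\FC$. By the very definition of the de Rham dg-algebra we have $\Omega^0_{\FC(U)|k} = \FC(U)$, and each $\Omega^n_{\FC(U)|k}$ is an $\FC(U)$-module; this structure is compatible with the restriction maps because the de Rham construction is covariant in the algebra. Hence $\Omega^n_{\FC|k}$ is a presheaf of modules over the presheaf $\FC$. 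Since $\FC$ is already a sheaf, sheafification endows $\OmegaU^n_{\FC|k}$ with the structure of a sheaf of $\FC$-modules (and moreover $\OmegaU^0_{\FC|k}\cong \FC$).

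With this module structure in hand, the cited theorem applies directly: since $\FC$ is soft by hypothesis, the $\FC$-module sheaf $\OmegaU^n_{\FC|k}$ is soft, which is exactly the conclusion we want. No further input specific to de Rham forms is needed beyond the module structure.

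The only subtle point, and the place where a self-contained proof would require real work, is the extension step contained inside the classical result. Concretely, given a section $\omega\in\OmegaU^n_{\FC|k}(Z)$ over a closed set $Z\subset X$, one would represent $\omega$ locally, on a cover of an open neighborhood of $Z$, by finite sums of elementary forms $f\,da_1\cdots da_n$ with $f,a_i\in\FC$, extend each ingredient to $X$ using softness of $\FC$, and glue the resulting global forms using a partition of unity subordinate to the cover and with coefficients in $\FC$, whose existence is equivalent to the softness of the sheaf of rings $\FC$. This gluing, rather than the construction of local lifts, would be the main obstacle in a direct argument; invoking the cited abstract theorem packages it away cleanly.
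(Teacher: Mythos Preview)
Your proposal is correct and follows exactly the same approach as the paper: observe that $\OmegaU^n_{\FC|k}$ is naturally a sheaf of $\FC$-modules and then invoke the standard result (Godement, Theorem~II.3.7.1) that a sheaf of modules over a soft sheaf of rings is soft. The paper's proof is the two-line version of what you wrote; your additional remarks about the module structure and the partition-of-unity mechanism behind the cited theorem are accurate but not needed.
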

\begin{proof}
The associated sheaf $\OmegaU ^n _{\FC |k}$ has the natural structure of an $\FC$-module.
So it is a sheaf of modules over a soft sheaf and, therefore, is soft by \cite[Theorem~II.$3.7.1$]{godement1958topologie}.
\end{proof}

\section{The map $\Lambda _\FC :\HH ^* (X,\kU _X[0] )\to \HS ^* (\Omega ^\bullet _{\FC (X)|k})$}\label{sec:theoppositemaplambda}
\subsection{The global sections of $\OmegaU ^n _{\FC |k}$}\label{ssec:theglobalsectionsofomegasheaves}
In this subsection we take $\FC$ to be a soft sheaf of $k$-algebras on a compact Hausdorff space $X$.

\begin{lemma}\label{lem:supportsdontintersecthanceproductiszero}
Suppose that $S,F\subset X$ are closed sets such that $S\cap F = \emptyset$.
Then there exists a section $g\in \FC (X)$ such that $\res _{X,S} (g) = 0$ and $\res _{X,F} (g) = 1$.
\end{lemma}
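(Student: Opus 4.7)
The plan is to use softness to extend a locally defined section that is $0$ near $S$ and $1$ near $F$, constructed with the help of normality of $X$.

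First, since $X$ is compact Hausdorff, hence normal, I would separate $S$ and $F$ by disjoint open neighborhoods: pick open sets $U\supset S$ and $V\supset F$ with $U\cap V=\emptyset$. Now the open set $U\cup V$ is the disjoint union of $U$ and $V$, so the sheaf axiom for the cover $\{U,V\}$ of $U\cup V$ has a vacuous gluing condition. The sections $0\in\FC(U)$ and $1\in\FC(V)$ therefore glue to a unique section $s\in\FC(U\cup V)$ with $\res_{U\cup V,U}(s)=0$ and $\res_{U\cup V,V}(s)=1$.

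Next, the closed set $S\cup F$ is contained in the open set $U\cup V$, so $s$ defines an element $\bar s\in\FC(S\cup F)$ under the colimit map $\FC(U\cup V)\to \FC(S\cup F)$. Since $\FC$ is soft on the compact Hausdorff space $X$, the restriction $\res_{X,S\cup F}:\FC(X)\to\FC(S\cup F)$ is surjective, so there exists $g\in\FC(X)$ with $\res_{X,S\cup F}(g)=\bar s$. Finally, $\res_{X,S}(g)$ equals the image of $\bar s$ in $\FC(S)$, which is represented by $s|_U=0\in\FC(U)$ in the colimit defining $\FC(S)$, hence is $0$; the analogous argument using $s|_V=1$ gives $\res_{X,F}(g)=1$.

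I do not expect any serious obstacle: the only subtlety is being careful that the colimit-defined restriction maps to the closed sets $S$ and $F$ really do send $\bar s$ to $0$ and $1$, which follows because equality in a filtered colimit is witnessed at a finite stage, and here it is witnessed already by the chosen neighborhoods $U$ and $V$.
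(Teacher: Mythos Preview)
Your proof is correct and is essentially the standard argument; the paper does not give its own proof but simply cites Godement \cite[Theorem~II.$3.7.2$]{godement1958topologie}, where this same normality-plus-softness argument appears.
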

The proof can be found in \cite[Theorem~II.$3.7.2$]{godement1958topologie}.

\begin{lemma}\label{lem:restrictionofformonopensetzero}
Suppose that $U\subset X$ is an open set.
Take $\omega \in \Omega ^n _{\FC (X)|k}$ such that $\omega |_U =0$ in $\Omega ^n _{\FC (U)|k}$.
Take also a section $\varphi\in \FC (X)$ such that $\supp \varphi\subset U$.
Then $\varphi\omega = 0$.
\end{lemma}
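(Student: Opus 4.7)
The plan is to use softness of $\FC$, combined with Lemmas~\ref{lem:kahlerformspreservecolimits} and~\ref{lem:kernelofinducedmapofomegas}, to lift the relation $\omega|_U=0$ from $\FC(U)$ back to $\FC(X)$ modulo error terms that $\varphi$ will annihilate.

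Write $\omega=\sum_i a_0^{(i)}\,da_1^{(i)}\cdots da_n^{(i)}$ with $a_j^{(i)}\in\FC(X)$. Applying Lemma~\ref{lem:supportsdontintersecthanceproductiszero} to the disjoint closed sets $X\setminus U$ and $\supp\varphi$, I obtain $g\in\FC(X)$ equal to $1$ on a neighborhood of $\supp\varphi$ and $0$ on a neighborhood of $X\setminus U$; put $K:=\supp g$, so $\supp\varphi\subset K\subset U$. By Lemma~\ref{lem:kahlerformspreservecolimits}, $\Omega^n_{\FC(U)|k}$ is the filtered colimit of the $\Omega^n_{B|k}$ over finitely generated $k$-subalgebras $B\subset\FC(U)$, so $\omega|_U=0$ means that $\omega$ vanishes already in $\Omega^n_{B|k}$ for some such $B$ containing every $a_j^{(i)}|_U$. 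Choose generators $b_1,\dots,b_r$ of $B$ including the $a_j^{(i)}|_U$, and use softness to pick lifts $\tilde b_l\in\FC(X)$ agreeing with $b_l$ on a common open neighborhood $W$ of $K$ with $W\subset U$ (taking $\tilde b_l=a_j^{(i)}$ whenever $b_l=a_j^{(i)}|_U$).

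Let $\pi:k[x_1,\dots,x_r]\twoheadrightarrow B$, $x_l\mapsto b_l$, with kernel $I$. By Lemma~\ref{lem:kernelofinducedmapofomegas}, $\Ker\Omega_\pi\subset\Omega^\bullet_{k[\mathbf x]|k}$ is the ideal generated by $I$ and $dI$. Lifting $\omega$ to its polynomial representative $\omega(\mathbf x)\in\Omega^n_{k[\mathbf x]|k}$, the vanishing $\omega|_U=0$ in $\Omega^n_{B|k}$ yields a decomposition
$$\omega(\mathbf x)=\sum_s\alpha_s\,P_s+\sum_t\beta_t\,dQ_t,\qquad P_s,Q_t\in I.$$
Substituting $x_l\mapsto\tilde b_l$ produces the identity
$$\omega=\sum_s\tilde\alpha_s\,\tilde P_s+\sum_t\tilde\beta_t\,d\tilde Q_t\quad\text{in }\Omega^n_{\FC(X)|k},$$
in which the elements $\tilde P_s,\tilde Q_t\in\FC(X)$ satisfy $\tilde P_s|_W=\tilde Q_t|_W=0$, because $P_s,Q_t\in I$ vanish in $B\subset\FC(U)$ and $\tilde b_l|_W=b_l|_W$ for every $l$.

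It remains to verify $\varphi\tilde P_s=\varphi\tilde Q_t=0$ in $\FC(X)$ and $\varphi\,d\tilde Q_t=0$ in $\Omega^1_{\FC(X)|k}$. The first two follow from a stalk-by-stalk check on the open cover $X=W\cup(X\setminus\supp\varphi)$: on $W$ the sections $\tilde P_s,\tilde Q_t$ have zero stalks, while on $X\setminus\supp\varphi$ the section $\varphi$ does. For $\varphi\,d\tilde Q_t$, applying Lemma~\ref{lem:supportsdontintersecthanceproductiszero} again supplies $\tilde g\in\FC(X)$ equal to $1$ near $\supp\varphi$ and $0$ near $X\setminus W$; the same stalk argument gives $\tilde g\tilde Q_t=0$, and Leibniz then yields $\tilde g\,d\tilde Q_t=-\tilde Q_t\,d\tilde g$, so that $\varphi\,d\tilde Q_t=\varphi\tilde g\,d\tilde Q_t=-(\varphi\tilde Q_t)\,d\tilde g=0$. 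Summing gives $\varphi\omega=0$. The main obstacle is that $\Omega^\bullet_{\FC|k}$ is only a presheaf: a gluing argument on $X=U\cup(X\setminus\supp\varphi)$ directly shows just that the sheafification $\sh(\varphi\omega)$ vanishes in $\OmegaU^n_{\FC|k}(X)$, which is strictly weaker than the claim; softness is the tool that upgrades this to an actual equality in $\Omega^n_{\FC(X)|k}$.
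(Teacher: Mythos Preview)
Your argument is correct, but it takes a longer route than the paper's. The paper applies Lemma~\ref{lem:kernelofinducedmapofomegas} directly to the surjection $\res_{X,S}:\FC(X)\twoheadrightarrow\FC(S)$ with $S:=\supp\varphi$: since $S\subset U$ and $\omega|_U=0$, the form $\omega$ lies in $\Ker\res_{X,S}$, which is the ideal generated by $T:=\Ker(\FC(X)\to\FC(S))$ and $dT$. This reduces the claim immediately to the case $\omega=u\eta+dt\wedge\lambda$ with $u,t\in T$, and then exactly the bump-function trick you use at the end (your $\tilde g$; the paper's $g$) finishes it. By contrast, you detour through Lemma~\ref{lem:kahlerformspreservecolimits} and a polynomial presentation of a finitely generated subalgebra $B\subset\FC(U)$, then use softness to transport the polynomial relation back to $\FC(X)$. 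The detour works, but the colimit lemma, the choice of lifts $\tilde b_l$, and the neighborhood $W$ are all avoided by the paper's single observation that $\FC(X)\to\FC(S)$ is already surjective (softness) and that $\omega$ is already in the relevant kernel. Your final paragraph correctly identifies the real content: the presheaf $\Omega^\bullet_{\FC|k}$ does not glue, and softness is precisely what lets you upgrade the local vanishing to an honest equality in $\Omega^n_{\FC(X)|k}$; the paper exploits this more directly.
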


\begin{proof}
Put $S:=\supp \varphi$.
The restriction homomorphism $\res _{X,S} :\FC (X)\to \FC (S)$ is surjective as $\FC$ is soft.
By Lemma~\ref{lem:kernelofinducedmapofomegas} the kernel of $\res _{X,S} :\Omega ^\bullet _{\FC (X)|k}\to \Omega ^\bullet _{\FC (S)|k}$ is the ideal generated by $\Ker \res _{X,S}$ and $d(\Ker \res _{X,S})$.
By assumption $\omega \in \Ker \res _{X,S}$ and, hence, it is enough to prove the statement for $\omega$ of the form
$$\omega = dt\wedge\lambda + u\eta ,$$
where $t,u\in\FC (X)$, $\lambda,\eta \in \Omega ^\bullet _{\FC (X)|k}$ and $\res _{X,S} (t) = \res _{X,S} (u) = 0$.
We have
$$\varphi\omega =(\varphi dt)\wedge\lambda + (\varphi u)\eta .$$
We have $\varphi u = 0$ as the supports of $\varphi$ and $u$ do not intersect.
It remains to prove that $\varphi dt = 0$.
By Lemma~\ref{lem:supportsdontintersecthanceproductiszero} there exists a section $g\in \FC(X)$ such that $\res _{X,S} (g) = 0$ and $\res _{X,\supp t}(g) = 1$.
We have
$$\varphi dt = \varphi d(tg)=(\varphi t)dg+(\varphi g)dt=0$$
as $t=tg$, $\varphi t=0$ and $\varphi g=0$.
\end{proof}

\begin{lemma}\label{lem:theglobalsectionsofomegasheaves}
Suppose $\FC$ is a soft sheaf of algebras on $X$.
Then the sheafification $\sh :\Omega ^n _{\FC (X)|k} = \Omega ^n _{\FC|k} (X)\to \OmegaU ^n _{\FC |k} (X)$ is an isomorphism.
\end{lemma}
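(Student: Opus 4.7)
The plan is to prove injectivity and surjectivity separately, both by reducing to a partition of unity $\{\varphi_i\}$ in $\FC(X)$ subordinate to a finite open cover and applying Lemma~\ref{lem:restrictionofformonopensetzero}. The construction of the partition of unity is the main obstacle, because the abstract algebra $\FC(X)$ need not contain reciprocals, so the familiar normalization $\varphi_i:=\eta_i/\sum_j\eta_j$ is unavailable.

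To build the partition of unity subordinate to a finite open cover $\{U_i\}_{i=1}^m$, I use compactness and normality of $X$ to produce double shrinkings $\overline{W_i}\subset V_i\subset\overline{V_i}\subset U_i$ with $\{W_i\}$ still covering $X$. Lemma~\ref{lem:supportsdontintersecthanceproductiszero} applied to the disjoint closed sets $\overline{W_i}$ and $X\setminus V_i$ supplies $\eta_i\in\FC(X)$ with $\res_{X,\overline{W_i}}(\eta_i)=1$, $\res_{X,X\setminus V_i}(\eta_i)=0$, and in particular $\supp\eta_i\subset\overline{V_i}\subset U_i$. Set $\varphi_i:=\eta_i\prod_{j<i}(1-\eta_j)$; a telescoping identity gives $\sum_i\varphi_i=1-\prod_i(1-\eta_i)$. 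Since every $x\in X$ lies in some $\overline{W_{i_0}}$, the germ $(1-\eta_{i_0})_x$ vanishes, so $\prod_i(1-\eta_i)$ has zero germ at every point and is therefore the zero section of $\FC$. This yields $\sum_i\varphi_i=1$ with $\supp\varphi_i\subset U_i$.

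For injectivity, if $\omega\in\Omega^n_{\FC(X)|k}$ has $\sh(\omega)=0$ then, since sheafification preserves stalks, each point $x$ admits an open neighborhood $U_x$ with $\res_{X,U_x}(\omega)=0$. Extract a finite subcover and the associated partition of unity; Lemma~\ref{lem:restrictionofformonopensetzero} yields $\varphi_i\omega=0$ for each $i$, and summing gives $\omega=\sum_i\varphi_i\omega=0$.

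For surjectivity, take $\sigma\in\OmegaU^n_{\FC|k}(X)$ and choose a finite open cover $\{U_i\}$ with presheaf representatives $\tau_i\in\Omega^n_{\FC(U_i)|k}$ of $\sh^{-1}(\sigma|_{U_i})$. Pick a partition of unity $\{\varphi_i\}$ and, again by Lemma~\ref{lem:supportsdontintersecthanceproductiszero}, bump sections $\psi_i\in\FC(X)$ with $\psi_i=1$ on a neighborhood of $\supp\varphi_i$ and $\supp\psi_i\subset U_i$. Writing $\tau_i=\sum_\alpha f_\alpha\,dg_{1,\alpha}\wedge\cdots\wedge dg_{n,\alpha}$ with $f_\alpha,g_{j,\alpha}\in\FC(U_i)$, extend each $\psi_if_\alpha$ and $\psi_ig_{j,\alpha}$ by zero to global sections $F_\alpha,G_{j,\alpha}\in\FC(X)$ (possible because their supports are contained in the closed set $\supp\psi_i\subset U_i$), and set $\tilde\tau_i:=\varphi_i\sum_\alpha F_\alpha\,dG_{1,\alpha}\wedge\cdots\wedge dG_{n,\alpha}$ together with $\tilde\tau:=\sum_i\tilde\tau_i$. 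On a neighborhood of each $x\in\supp\varphi_i$ one has $\psi_i=1$, so there $F_\alpha$ restricts to $f_\alpha$ and $dG_{j,\alpha}=dg_{j,\alpha}$, forcing the germ of $\tilde\tau_i$ at $x$ to equal that of $\varphi_i\tau_i$; outside $\supp\varphi_i$ both germs vanish. Combining with $\sum\varphi_i=1$, the germ of $\sh(\tilde\tau)$ at every $x$ equals $\sigma_x$, and the sheaf axiom gives $\sh(\tilde\tau)=\sigma$.
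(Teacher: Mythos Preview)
Your proof is correct, and the injectivity half is essentially the same as the paper's. The two arguments diverge in how they handle the auxiliary constructions.

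For the partition of unity, the paper simply invokes Godement's Theorem~II.3.6.1, whereas you build it by hand via the telescoping product $\varphi_i=\eta_i\prod_{j<i}(1-\eta_j)$. Your construction is more self-contained and avoids an external reference, at the cost of a paragraph of work.

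For surjectivity, the paper takes a slicker route: since $\FC$ is soft, $\res_{X,F_i}:\FC(X)\to\FC(F_i)$ is surjective for $F_i=\supp\varphi_i$, so by Lemma~\ref{lem:kernelofinducedmapofomegas} the induced map $\Omega^n_{\FC(X)|k}\to\Omega^n_{\FC(F_i)|k}$ is surjective as well; one then lifts $\res_{U_i,F_i}(\omega_i)$ to a global form $\bar\omega_i$ and sets $\omega=\sum_i\varphi_i\bar\omega_i$. Your approach instead decomposes each $\tau_i$ into monomials $f_\alpha\,dg_{1,\alpha}\wedge\cdots\wedge dg_{n,\alpha}$ and extends the individual coefficient functions by zero after multiplying by a bump $\psi_i$. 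This is more explicit and avoids appealing to Lemma~\ref{lem:kernelofinducedmapofomegas} for the surjectivity step, but it requires an extra layer of bump functions and the monomial bookkeeping. The paper's argument is shorter and more conceptual; yours makes the extension mechanism completely transparent.
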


\begin{proof}
First we prove injectivity.
Take an $n$-form $\omega\in \Omega ^n _{\FC|k} (X)$ such that its image in $\OmegaU ^n _{\FC |k} (X)$ is $0$.
Then there is a finite open cover $(U_i)$ of $X$ such that the restrictions $\omega |_{U_i}$ are $0$ for all $i$.
Choose a partition of unity $(\varphi _i\in \FC (X))$ subordinate to the cover $(U_i)$.
It can always be done by \cite[Theorem~II.$3.6.1$]{godement1958topologie}.
By Lemma~\ref{lem:restrictionofformonopensetzero}, $\varphi _i \omega = 0$ for all $i$.
Now, $\omega = \sum _{i} \varphi _i \omega = 0$.

For surjectivity, take a global section of the sheaf $\OmegaU ^n _{\FC |k}$.
It can be represented by a set of pairs $(U_i, \omega _i)$ where the sets $U_i$ form a finite open cover of $X$ and $\omega _i\in \Omega ^n _{\FC (U_i)|k}$ such that the germs of $\omega _i$ agree at every point.
We seek a form $\omega\in\Omega ^n _{\FC (X)|k}$ such that $\omega _x = (\omega _i)_x$ in $\Omega ^n _{\FC _x|k}$ for each $i$ and $x\in U_i$.
Take a partition of unity $(\varphi _i)$ subordinate to the cover $(U_i)$ and denote $F_i := \supp \varphi _i$.
The restriction map $\res _{X,S}:\Omega ^n _{\FC (X)|k}\to \Omega ^n _{\FC (S)|k}$ is surjective for any closed $S$ by Lemma~\ref{lem:kernelofinducedmapofomegas}.
Hence, we can extend the forms $\res _{X,F_i}(\omega _i)$ to some forms $\bar{\omega} _i\in \Omega ^n _{\FC (X)|k}$. 
Now, put $\omega := \sum _{i} \varphi _i\bar{\omega} _i\in \Omega ^n _{\FC(X)|k}$.
Take $i$ and a point $x\in U_i$.
Introduce $J:=\lbrace j\,|\,x\in F_j \rbrace$.
We have
$$\omega _x = (\sum _{j} \varphi _j \bar{\omega} _j)_x = (\sum _{j\in J} \varphi _j \bar{\omega} _j)_x  = (\sum _{j\in J} \varphi _j \omega _j)_x =(\sum _{j\in J} \varphi _j)_x(\omega _i)_x=(\omega _i)_x.$$
\end{proof}

\subsection{The construction and naturality of $\Lambda _\FC :\HH ^* (X,\kU _X[0] )\to \HS ^* (\Omega ^\bullet _{\FC (X)|k})$}

\begin{definition}\label{def:lambdamap}
For a soft sheaf of algebras $\FC$ on a compact Hausdorff space $X$ we define the map $\Lambda _\FC$ by the following diagram:
\begin{equation*}
\begin{tikzcd}
\HH ^* (X,\kU _X [0] )\arrow[r,"\HH (\epsilon )"]\arrow[ddr,"\Lambda _\FC"',dashed] & \HH ^* (X,\OmegaU ^\bullet _{\FC|k})\\
 & \HS ^* (\OmegaU ^\bullet _{\FC|k} (X)) \arrow[u,"\Upsilon"',"\cong"]\\
 & \HS ^* (\Omega ^\bullet _{\FC (X)|k}) \arrow[u,"\cong","\HS (\sh )"'].
\end{tikzcd}
\end{equation*}
Here the map $\HS (\sh )$ is an isomorphism by Lemma~\ref{lem:theglobalsectionsofomegasheaves} and the map $\Upsilon$ is an isomorphism as the sheaves $\OmegaU ^\bullet _{\FC|k}$ are acyclic by Lemma~\ref{lem:sheavesomegaaresoft}.
\end{definition}

A morphism of ringed spaces $(f,\varphi):(X,\FC )\to (Y,\GC)$ consists of a continuous map $f:X\to Y$ and a morphism of sheaves $\varphi :\GC\to f_*\FC$.

\begin{proposition}\label{prp:lambdaisfunctorial}
The linear map $\Lambda _-$ is natural with respect to morphisms of ringed spaces in the following sense:
for a morphism of ringed spaces $(f,\varphi ):(X,\FC )\to (Y,\GC)$ the following diagram is commutative:
\begin{equation*}
\begin{tikzcd}
\HH ^*(X,\kU _X [0]) \arrow[r,"\Lambda _\FC"] & \HS ^* (\Omega ^\bullet _{\FC (X)|k})\\
\HH ^*(Y,\kU _Y [0]) \arrow[r,"\Lambda _\GC"]\arrow[u,"f^*"] & \HS ^* (\Omega ^\bullet _{\GC (Y)|k})\arrow[u,"\HS (\Omega _{\varphi (Y)})"'].
\end{tikzcd}
\end{equation*}
\end{proposition}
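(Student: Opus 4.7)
The plan is to follow the decomposition of $\Lambda_\FC$ from Definition~\ref{def:lambdamap}, writing it as $\HS(\sh)^{-1}\circ \Upsilon^{-1}\circ \HH(\epsilon)$, and verify naturality of each of the three constituent squares separately. Once I have produced a morphism of complexes of sheaves on $Y$ that lifts $\varphi$ on coefficients, the rest should reduce to Lemma~\ref{lem:functorialityofhypercohomology} together with naturality of sheafification and of coaugmentation.

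First, I would construct a morphism of complexes of sheaves $\tilde\varphi:\OmegaU^\bullet_{\GC|k}\to f_*\OmegaU^\bullet_{\FC|k}$ from $\varphi$. For each open $U\subset Y$ the $k$-algebra homomorphism $\varphi(U):\GC(U)\to \FC(f^{-1}(U))=(f_*\FC)(U)$ induces, by functoriality of $\Omega^\bullet_{-|k}$ (Subsection~\ref{ssec:algebraicderhamforms}), a morphism $\Omega_{\varphi(U)}:\Omega^\bullet_{\GC(U)|k}\to \Omega^\bullet_{\FC(f^{-1}(U))|k}$; these are natural in $U$ and assemble into a morphism of complexes of presheaves on $Y$, whose sheafification is $\tilde\varphi$. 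The key compatibility is that, under the isomorphism of Lemma~\ref{lem:theglobalsectionsofomegasheaves}, the global sections $\tilde\varphi(Y)$ coincide with $\Omega_{\varphi(Y)}:\Omega^\bullet_{\GC(Y)|k}\to \Omega^\bullet_{\FC(X)|k}$; this is built into the construction.

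Next, I would split the square of the proposition into three subsquares following the decomposition above. The $\Upsilon$-square is commutative by Lemma~\ref{lem:functorialityofhypercohomology} applied to the pair $(f,\tilde\varphi)$. The sheafification square is commutative by naturality of $\sh$ applied to the presheaf morphism $\Omega_{\varphi(Y)}$, combined with the identification of Step~1. For the coaugmentation square, I would first verify commutativity at the level of complexes of sheaves on $Y$:
$$\tilde\varphi\circ\epsilon_Y \;=\; (f_*\epsilon_X)\circ\eta \;:\; \kU_Y[0]\to f_*\OmegaU^\bullet_{\FC|k},$$
where $\eta:\kU_Y[0]\to f_*\kU_X[0]$ is canonical; both compositions send $1\mapsto 1$, hence agree. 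Naturality of $f^*$ from Lemma~\ref{lem:functorialityofhypercohomology} in the coefficient morphism then promotes this to commutativity of the hypercohomology square.

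The chief obstacle is organizational rather than mathematical: one must assemble $\tilde\varphi$ from the morphisms $\Omega_{\varphi(U)}$ so that it is simultaneously compatible with restrictions, with the coaugmentations $\epsilon$, and with the global-section isomorphism of Lemma~\ref{lem:theglobalsectionsofomegasheaves}. Once this bookkeeping is in place, no additional input about softness or acyclicity is needed beyond what is already encoded in Definition~\ref{def:lambdamap}, and the three subsquares glue together to give the desired commutative square.
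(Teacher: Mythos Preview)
Your proposal is correct and follows essentially the same approach as the paper: construct the induced morphism $\OmegaU_\varphi:\OmegaU^\bullet_{\GC|k}\to f_*\OmegaU^\bullet_{\FC|k}$, unfold $\Lambda$ via Definition~\ref{def:lambdamap} into the three pieces $\HH(\epsilon)$, $\Upsilon$, $\HS(\sh)$, and check each resulting square separately, invoking Lemma~\ref{lem:functorialityofhypercohomology} for the $\Upsilon$-square. The paper dismisses the coaugmentation and sheafification squares as ``commutative for obvious reasons,'' whereas you spell out those verifications more explicitly, but there is no substantive difference.
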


\begin{proof}
The morphism of sheaves $\varphi :\GC\to f_*\FC$ defines the morphism of complexes of sheaves $\OmegaU _{\varphi} :\OmegaU ^\bullet _{\GC |k}\to f_*\OmegaU ^\bullet _{\FC |k}$.

Consider the diagram
\begin{equation*}
\begin{tikzcd}
\HH ^*(X,\kU _X [0])\arrow[r,"\HH (\epsilon )"]\arrow[rrr,"\Lambda _\FC",bend left=15]& \HH ^* (X,\OmegaU ^\bullet _{\FC|k})&\HS ^* (\OmegaU ^\bullet _{\FC|k} (X)) \arrow[l,"\Upsilon"',"\cong"]& \HS ^* (\Omega ^\bullet _{\FC (X)|k})\arrow[l,"\HS (\sh )"',"\cong"]\\
\HH ^*(Y,\kU _Y [0])\arrow[u,"f^*"]\arrow[r,"\HH (\epsilon )"]\arrow[rrr,"\Lambda _\GC",bend right=15]&\HH ^* (Y,\OmegaU ^\bullet _{\GC|k})\arrow[u,"f^*"] &\HS ^* (\OmegaU ^\bullet _{\GC|k} (Y))\arrow[u,"\HS (\OmegaU _\varphi (Y))"]\arrow[l,"\Upsilon"',"\cong"] & \HS ^* (\Omega ^\bullet _{\GC (Y)|k})\arrow[u,"\HS (\Omega _{\varphi (Y)})"]\arrow[l,"\HS (\sh )"',"\cong"].
\end{tikzcd}
\end{equation*}
The middle square is commutative by Lemma~\ref{lem:functorialityofhypercohomology}.
The other squares are commutative for obvious reasons.
\end{proof}

\section{Splitting for the algebra of smooth functions}\label{sec:splittingforthealgebraofsmoothfunctions}
In this section $M$ is a compact smooth manifold.
We prove that for a smooth manifold $M$ the groups $\HH ^* (M, \RU _M [0])$ canonically split off of $\HS ^* (\Omega ^\bullet _{C^\infty (M)|\R})$.

\subsection{Canonical isomorphism $\Theta$}\label{sssec:thetamorphism}
Denote by $\Omega ^\bullet _M$ the complex of sheaves of smooth differential forms on $M$.
We often denote by $\Omega ^\bullet (M)$ the complex $\Omega ^\bullet _M (M)$.
Consider the morphism of complexes, the coaugmentation, $\epsilon :\RU _M [0]\to \Omega ^\bullet _M$ defined by $\epsilon (1):=1$.
We define $\Theta$ by the following commutative diagram
\begin{equation*}
\begin{tikzcd}
\HH ^* (M,\RU _M[0] )\arrow[r,"\HH (\epsilon )","\cong"']\arrow[rr,bend left=15,"\Theta"]&\HH ^* (M,\Omega _{M} ^\bullet )&\HS ^* (\Omega ^\bullet (M))\arrow[l,"\Upsilon"',"\cong"].
\end{tikzcd}
\end{equation*}
The map $\HH (\epsilon )$ in this case is an isomorphism (as $\epsilon$ is a quasi-isomorphism) and $\Upsilon$ is an isomorphism and so $\Theta$ is an isomorphism.

\subsection{Morphisms $\pi$ and $\underline{\pi}$}\label{sssec:pimorphism}
\begin{enumerate}[leftmargin=*]
\item
Take an open set $U\subset M$; the identity map $C^\infty (U)\to \Omega ^0 (U)$ can be uniquely extended to a morphism of dg-algebras $\pi:\Omega ^\bullet _{C^\infty (U)|\R} \to \Omega ^\bullet (U)$ by the universal property of $\Omega ^\bullet _{C^\infty (U)|\R}$.
The following diagram is commutative:
\begin{equation*}
\begin{tikzcd}
C^\infty (U) \arrow [r]\arrow[rd]&\Omega ^\bullet _{C^\infty (U)|\R}\arrow[d,"\pi"]\\
&\Omega ^\bullet (U).
\end{tikzcd}
\end{equation*}
\item
This way we obtain a morphism of complexes of presheaves $\pi :\Omega ^\bullet _{C^\infty _M|\R} \to \Omega ^\bullet _M$.
\item
By the universal property of sheafification we have the morphism of complexes of sheaves $\underline{\pi}:\OmegaU ^\bullet _{C^\infty _M|\R} \to \Omega ^\bullet _M$.
\end{enumerate}

The following diagrams are commutative:
\begin{equation}\label{diag:propertiesofpi}
\begin{tikzcd}
\RU _M [0] \arrow[r,"\epsilon"]\arrow[rd,"\epsilon"']&\OmegaU ^\bullet _{C^\infty _M|\R}\arrow[d,"\underline{\pi}"]&&\Omega ^\bullet _{C^\infty _M|\R}\arrow[r,"\pi"]\arrow[d,"\sh"']&\Omega ^\bullet _M\\
&\Omega^\bullet _M,&&\OmegaU ^\bullet _{C^\infty _M|\R}\arrow[ru,"\underline{\pi}"'].&
\end{tikzcd}
\end{equation}

\subsection{Main splitting theorem}
\begin{theorem}\label{thm:thecompositionforsmoothfunctions}
The following diagram is commutative:
\begin{equation*}
\begin{tikzcd}
\HH ^* (M,\RU _M[0] ) \arrow[r,"\Lambda _{C^\infty _M}"]\arrow[rr,"\Theta",bend left =20]&\HS ^* (\Omega ^\bullet _{C^\infty (M)|\R})\arrow[r,"\HS (\pi )"]&\HS ^* (\Omega ^\bullet (M))
.\end{tikzcd}
\end{equation*}
\end{theorem}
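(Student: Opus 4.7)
The plan is to prove this by a diagram chase, using the two commutative diagrams in (\ref{diag:propertiesofpi}) together with the naturality of $\Upsilon$ provided by Lemma~\ref{lem:functorialityofhypercohomology}. The key observation is that the morphism of complexes of sheaves $\underline{\pi}:\OmegaU^\bullet_{C^\infty_M|\R}\to\Omega^\bullet_M$ interpolates between the ``soft'' world (where $\Lambda$ lives) and the de Rham world (where $\Theta$ lives). Both $\epsilon$ and $\pi$ factor through $\underline{\pi}$ (the first via $\underline{\pi}\circ\epsilon=\epsilon$ at the sheaf level, the second via $\pi=\underline{\pi}(M)\circ \sh$ on global sections), so $\underline{\pi}$ is exactly the bridge we need.

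Concretely, I would assemble the following diagram, whose cells I will justify one by one:
\begin{equation*}
\begin{tikzcd}[column sep=small]
\HH^*(M,\RU_M[0])\arrow[r,"\HH(\epsilon)"]\arrow[rd,"\HH(\epsilon)"']&\HH^*(M,\OmegaU^\bullet_{C^\infty_M|\R})\arrow[d,"\HH(\underline{\pi})"]&\HS^*(\OmegaU^\bullet_{C^\infty_M|\R}(M))\arrow[l,"\Upsilon"']\arrow[d,"\HS(\underline{\pi}(M))"]&\HS^*(\Omega^\bullet_{C^\infty(M)|\R})\arrow[l,"\HS(\sh)"']\arrow[ld,"\HS(\pi)"]\\
&\HH^*(M,\Omega^\bullet_M)&\HS^*(\Omega^\bullet(M))\arrow[l,"\Upsilon"']&
\end{tikzcd}
\end{equation*}
The top row composed backwards is $\Lambda_{C^\infty_M}$ by Definition~\ref{def:lambdamap}, while the bottom row composed backwards is $\Theta$ by Subsection~\ref{sssec:thetamorphism}, so it suffices to show that all three inner cells commute.

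The left triangle commutes on the nose because $\underline{\pi}\circ\epsilon=\epsilon$ as morphisms of complexes of sheaves (the left square of (\ref{diag:propertiesofpi})), hence $\HH(\underline{\pi})\circ\HH(\epsilon)=\HH(\epsilon)$. The right triangle commutes because, on global sections, the right square of (\ref{diag:propertiesofpi}) reads $\underline{\pi}(M)\circ\sh=\pi:\Omega^\bullet_{C^\infty(M)|\R}\to\Omega^\bullet(M)$; applying $\HS$ gives $\HS(\underline{\pi}(M))\circ\HS(\sh)=\HS(\pi)$. The middle square is the commutativity clause of Lemma~\ref{lem:functorialityofhypercohomology} applied to $f=\id_M$ and $\varphi=\underline{\pi}$.

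There is no real obstacle here beyond bookkeeping; the only subtlety is that $\pi$ is only defined at the level of presheaves (before sheafification) and at the level of the sheafified complex via $\underline{\pi}$, so one must be careful to apply the correct commutative diagram of (\ref{diag:propertiesofpi}) in each cell. Once the three cells are verified, chasing a class $\alpha\in\HH^*(M,\RU_M[0])$ from the upper-left corner both ways around the outside yields $\HS(\pi)\circ\Lambda_{C^\infty_M}(\alpha)=\Theta(\alpha)$, which is the claim.
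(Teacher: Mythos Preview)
Your proof is correct and follows essentially the same route as the paper's: both assemble the same diagram with the three cells justified by the two squares in~(\ref{diag:propertiesofpi}) and by the naturality of $\Upsilon$ (which is exactly your invocation of Lemma~\ref{lem:functorialityofhypercohomology} with $f=\id_M$). The only difference is cosmetic---the paper draws the diagram in a slightly different shape and also records explicitly that the relevant $\Upsilon$ is an isomorphism to read off the conclusion.
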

\begin{proof}
Consider the diagram
\begin{equation*}
\begin{tikzcd}
\HH ^* (M,\RU _M [0] )\arrow[rr,"\HH (\epsilon)",bend left = 15]\arrow[rrd,"\Theta" {xshift=-50}]\arrow[r,"\HH (\epsilon )"]\arrow[ddr,"\Lambda _{C^\infty _M}"',bend right =10] & \HH ^* (M,\OmegaU ^\bullet _{C^\infty _M|\R})\arrow[r,"\HH (\underline{\pi} )"]&\HH ^* (M,\Omega ^\bullet _M)\\
 & \HS ^* (\OmegaU ^\bullet _{C^\infty _M|\R} (M)) \arrow[u,"\Upsilon" {yshift =-4},crossing over]\arrow[r,"\HS (\underline{\pi})"']&\HS ^* (\Omega ^\bullet (M))\arrow[u,"\Upsilon"',"\cong"]\\
 & \HS ^* (\Omega ^\bullet _{C^\infty (M)|\R}) \arrow[u,"\HS (\sh )"']\arrow[ru,"\HS (\pi )"',bend right =10].&
\end{tikzcd}
\end{equation*}
The left quadrangle is commutative by the definition of $\Lambda _{C^\infty _M}$.
The upper triangle and the bottom right triangles are commutative by Diagrams~\ref{diag:propertiesofpi}.
The upper right square is commutative by the naturality of $\Upsilon$.
The upper triangle with $\Theta$ is commutative by the definition of $\Theta$.
As the right arrow $\Upsilon$ is an isomorphism, the whole diagram is commutative.
\end{proof}

\section{Simplicial dg-algebra of flat cochains $\Omega ^\bullet _\flat (\Delta ^-)$}\label{sec:simplicialdgalgebraofflatforms}
\subsection{Flat cochains}

We present the dg-algebra $\Omega _\flat ^\bullet (\Delta ^n)$ of flat cochains on the closed $n$-simplex.
We follow \cite[]{whitney2015geometric}, see also \cite{heinonen2005lectures}.
The dg-algebras $\Omega _\flat ^\bullet (\Delta ^n)$, $n\geq 0$, will form the simplicial dg-algebra $\Omega ^\bullet _\flat (\Delta ^-)$.

For a convex set $V\subset \R ^m$ we denote by $\Aff V$ the affine hull of $V$.
We denote by $\relint V$ the interior of $V$ relative to $\Aff V$.

\begin{definition}\label{def:affinechains}
For a convex set $V\subset \R ^m$ denote by $\CS ^{\text{aff}}_k (V) $ the vector space of affine singular $k$-chains in $V$ with coefficients in $\R$.
The boundary map $\partial :\CS ^{\text{aff}}_k (V)\to\CS ^{\text{aff}}_{k-1} (V)$ is defined in the usual manner:
we denote by $\gamma _i :\Delta ^{k-1}\to\Delta ^k$ the $i$-th face embedding, then for a simplex $\sigma :\Delta ^k\to V$ we denote by $\partial _i\sigma : \Delta ^{k-1}\to V$ the composition $\sigma\circ\gamma _i$ and by $\partial \sigma$ the singular chain $\sum _{i=0}^k (-1)^i\partial _i \sigma$.
The vector spaces $\CS ^{\text{aff}}_k (V)$ together with the boundary map form the chain complex $\CS ^{\text{aff}}_\bullet (V)$.
\end{definition}

An affine map $f:V\to V^\prime$ induces the morphism of complexes
$$\CS ^{\text{aff}} (f):\CS ^{\text{aff}}_\bullet (V)\to \CS ^{\text{aff}}_\bullet (V^\prime ).$$
Hence, the correspondence $V\mapsto \CS ^{\text{aff}}_\bullet (V)$ is covariant with respect to affine maps.

Define the \textit{mass} $|\alpha |$ of an affine $k$-chain $\alpha = \sum \lambda _i \sigma _i$, where $\sigma _i$ are distinct singular simplices, as
$$|\alpha | := \sum |\lambda _i| |\sigma _i|_\mathrm{L}$$
where $|\sigma _i|_\mathrm{L}$ denotes the Lebesgue $k$-measure of $\sigma _i$.
Define the flat seminorm $|\cdot |^\flat$ on $\CS ^{\text{aff}}_k (V)$ as
$$|\alpha |^\flat := \inf _{\beta\in\CS ^{\text{aff}}_{k+1} (V)} \lbrace |\alpha - \partial \beta|+|\beta| \rbrace.$$

\begin{lemma}\label{lem:convexsetsinclusionpreservesseminorm}
The map $\CS ^{\text{aff}}_k (V)\to \CS ^{\text{aff}}_k (V^\prime )$ induced by an inclusion of convex sets $V\hookrightarrow V^\prime$ preserves the flat seminorm.
\end{lemma}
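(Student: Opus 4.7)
My plan is to prove equality of the two flat seminorms by establishing both inequalities separately. The easy direction $|i_*\alpha|^\flat \leq |\alpha|^\flat$ is immediate: the mass of an affine chain is the weighted sum of the Lebesgue $k$-measures of the images of its simplices in $\R^m$, which is intrinsic to the ambient $\R^m$; hence every competitor $\beta \in \CS^{\text{aff}}_{k+1}(V)$ for the left-hand infimum yields $i_*\beta \in \CS^{\text{aff}}_{k+1}(V')$ with $|\alpha - \partial\beta| + |\beta| = |i_*\alpha - \partial(i_*\beta)| + |i_*\beta|$, and one takes infima.

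For the reverse direction $|\alpha|^\flat \leq |i_*\alpha|^\flat$, I would construct a chain map $r_\bullet \colon \CS^{\text{aff}}_\bullet(V') \to \CS^{\text{aff}}_\bullet(V)$ that (i) is the identity on the subcomplex $\CS^{\text{aff}}_\bullet(V)$, and (ii) is mass-nonincreasing. First reduce to the case $V$ closed: the closure $\overline V$ is still convex, and since the mass of an affine simplex is a continuous (in fact polynomial) function of its vertices, any vertex lying in $\overline V \setminus V$ can be pushed slightly into the interior of $V$ along a segment to an interior point, changing $|\alpha - \partial\beta| + |\beta|$ by an arbitrarily small amount. With $V$ closed, let $\pi \colon \R^m \to V$ be the nearest-point projection, which is well-defined, $1$-Lipschitz, and satisfies $\pi|_V = \id_V$. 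Define $r$ on an affine simplex with vertices $v_0, \dots, v_k$ to be the affine simplex with vertices $\pi(v_0), \dots, \pi(v_k)$ (possibly degenerate, contributing zero mass), and extend linearly. The vertex formula for the boundary of an affine simplex makes $r$ a chain map; $\pi|_V = \id_V$ makes it the identity on $\CS^{\text{aff}}_\bullet(V)$. Granting (ii), for any $\beta' \in \CS^{\text{aff}}_{k+1}(V')$ set $\beta := r(\beta') \in \CS^{\text{aff}}_{k+1}(V)$; using $r\alpha = \alpha$ and $r \partial = \partial r$,
\begin{equation*}
|\alpha - \partial\beta| + |\beta| = |r(\alpha - \partial\beta')| + |r\beta'| \leq |\alpha - \partial\beta'| + |\beta'|,
\end{equation*}
and infimizing over $\beta'$ yields the desired inequality.

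The main obstacle is property (ii): for any affine $k$-simplex $\sigma$ with vertices $v_0, \dots, v_k \in \R^m$, the affine simplex $\sigma'$ with vertices $\pi(v_i)$ must satisfy $|\sigma'|_L \leq |\sigma|_L$. This is \emph{not} a formal consequence of the $1$-Lipschitzness of $\pi$ alone: by Kirszbraun's theorem one can extend an arbitrary distance-nonincreasing vertex map to a $1$-Lipschitz self-map of $\R^m$, and explicit choices (e.g., a nearly-degenerate triangle whose pairwise distances all shrink, producing an equilateral triangle of strictly larger area) show that a general $1$-Lipschitz vertex map can enlarge simplex volume. The argument must exploit the stronger \emph{firm nonexpansivity} of the nearest-point projection, $\langle x - y, \pi(x) - \pi(y) \rangle \geq |\pi(x) - \pi(y)|^2$, equivalently that $\pi$ is the gradient of a $C^{1,1}$ convex function, so that $D\pi$ is a symmetric positive semidefinite operator with operator norm at most $1$ almost everywhere. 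A workable route is to interpolate $\sigma_t$ by vertices $v_i^t := (1-t)v_i + t\pi(v_i)$ between $\sigma_0 = \sigma$ and $\sigma_1 = \sigma'$ and to verify $\tfrac{d}{dt}|\sigma_t|_L \leq 0$ via a direct calculation on the Gram determinant of the edge vectors $v_i^t - v_0^t$, with the firm nonexpansivity providing the sign at each point; this is the technical core that I expect to require the most care.
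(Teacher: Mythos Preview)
The paper does not prove this lemma; it cites Whitney's book, Lemma~V.2b. Your easy inequality and the reduction to closed $V$ are fine, but the central claim~(ii) --- that applying the nearest-point projection $\pi$ to the vertices of an affine $k$-simplex never increases its $k$-volume --- is \emph{false}, so the strategy cannot close.

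Here is a counterexample in the plane. Let $V=\{(x,y):x\le 0,\ y\le 0\}$, so that $\pi(x,y)=(\min(x,0),\min(y,0))$. For $N>1$ take the triangle with vertices $A=(1,-N)$, $B=(-N,1)$, $C=(0,0)$. Its area is $\tfrac12\lvert 1\cdot 1-N\cdot N\rvert=\tfrac12(N^{2}-1)$, while the triangle on $\pi(A)=(0,-N)$, $\pi(B)=(-N,0)$, $\pi(C)=(0,0)$ has area $\tfrac12 N^{2}$, strictly larger. Along your interpolation $v_i^{t}=(1-t)v_i+t\,\pi(v_i)$ (with $v_0=C$) one computes
\[
\det\bigl[v_1^{t}-v_0^{t},\ v_2^{t}-v_0^{t}\bigr]=(1-t)^{2}-N^{2},
\]
whose absolute value \emph{increases} on $[0,1]$; thus $\tfrac{d}{dt}|\sigma_t|_{\mathrm L}>0$ throughout, and the Gram-determinant monotonicity you hoped to establish fails already here. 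Firm nonexpansiveness does control each edge length individually (your $k=1$ calculation is correct), but it does not prevent the projected edge vectors from becoming more nearly orthogonal, which is precisely what drives the area up. So no vertexwise retraction $r$ with properties (i) and (ii) exists.

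Whitney's argument is of a different nature: rather than a single mass-nonincreasing chain map, one subdivides the competitor $\beta'$ and deforms the small pieces into $V$ with an error controlled by the mesh size, recovering equality of the flat seminorms only in the limit. If you want a self-contained proof, that limiting mechanism is what you should reproduce; the one-shot projection shortcut is unavailable.
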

\begin{proof}
Follows from \cite[Lemma~V.2b]{whitney2015geometric}.
\end{proof}

\begin{lemma}\label{lem:theimageisdense}
Let $V$ be a convex set.
The map $\CS ^{\text{aff}}_k (\relint V)\to \CS ^{\text{aff}}_k (V)$ induced by the inclusion $\relint V\hookrightarrow V$ has image dense with respect to the flat seminorm.
\end{lemma}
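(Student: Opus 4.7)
The plan is to push each affine chain slightly toward a fixed relative-interior point of $V$ and to control the resulting flat-seminorm displacement via the classical prism construction.

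First I would fix a point $p \in \relint V$; if $\relint V = \emptyset$ then $V$ is a single point and the lemma is trivial. For an affine simplex $\sigma : \Delta^k \to V$ and $t \in [0,1]$ I set $\sigma_t(x) := (1-t)\sigma(x) + tp$, and extend linearly to a map $\alpha \mapsto \alpha_t$ on $\CS^{\text{aff}}_k(V)$. By the standard line-segment principle for convex sets (a convex combination with positive weight on a point of $\relint V$ lies in $\relint V$), $\sigma_t(\Delta^k) \subset \relint V$ for every $t > 0$, so $\alpha_t$ lies in the image of $\CS^{\text{aff}}_k(\relint V) \to \CS^{\text{aff}}_k(V)$. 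The task therefore reduces to showing $|\alpha - \alpha_t|^\flat \to 0$ as $t \to 0^+$.

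For this I would apply the classical prism operator to the affine homotopy $H_\sigma : \Delta^k \times [0,1] \to V$ given by $H_\sigma(x,s) := (1-ts)\sigma(x) + tsp$. Triangulating $\Delta^k \times [0,1]$ in the standard way as a sum of $(k+1)$-simplices and pushing forward by $H_\sigma$ (which is affine on each piece) yields an affine $(k+1)$-chain $P_t(\sigma) \in \CS^{\text{aff}}_{k+1}(V)$ satisfying
$$\partial P_t(\sigma) \;=\; \sigma_t - \sigma - P_t(\partial \sigma),$$
where $P_t(\partial\sigma)$ is built analogously from the faces of $\sigma$. Extending $P_t$ linearly, $\alpha_t - \alpha - \partial P_t(\alpha) = P_t(\partial\alpha)$, and the definition of the flat seminorm gives
$$|\alpha - \alpha_t|^\flat \;\leq\; |P_t(\alpha)| + |P_t(\partial \alpha)|.$$

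It then remains to bound the two masses on the right. Each affine $(k+1)$-simplex appearing in $P_t(\sigma)$ has image contained in the convex hull of $\sigma(\Delta^k)$ with $\{p\}$, and since the ``height'' parameter $ts$ ranges over $[0,t]$, its Lebesgue $(k+1)$-measure is bounded by $C_\sigma \cdot t$ for a constant $C_\sigma$ depending only on $\sigma$ and $p$; this follows from a direct determinant computation on each piece of the prism decomposition. Applying the same estimate to the finitely many $(k-1)$-faces of $\sigma$ yields $|P_t(\partial \sigma)| = O(t)$. Summing over the finitely many simplices of $\alpha$ gives $|\alpha - \alpha_t|^\flat \to 0$, proving density. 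The main technical point is precisely this mass estimate $|P_t(\sigma)| = O(t)$: it is elementary but is the one step that requires genuine bookkeeping on the prism decomposition.
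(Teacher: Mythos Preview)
Your approach is precisely what the paper has in mind: its proof is the single sentence ``any singular affine simplex in $V$ can be approximated by one in $\relint V$'' together with a reference to Whitney, and your contraction $\sigma_t=(1-t)\sigma+tp$ followed by a prism estimate is the natural way to make that sentence precise.

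One small technical correction: the homotopy $H_\sigma(x,s)=(1-ts)\sigma(x)+tsp$ contains the bilinear term $-ts\,\sigma(x)$ and is therefore \emph{not} affine on the pieces of the standard prism triangulation of $\Delta^k\times[0,1]$. What you want instead is, on each such piece, the unique \emph{affine} $(k{+}1)$-simplex determined by the vertex images $\sigma(v_i)$ (bottom) and $(1-t)\sigma(v_i)+tp$ (top). For this affine prism chain the boundary identity $\partial P_t(\sigma)=\sigma_t-\sigma-P_t(\partial\sigma)$ is the usual combinatorial one, and the $O(t)$ volume bound you state is immediate since each such simplex has $k{+}2$ vertices lying in two parallel $k$-flats at distance $O(t)$. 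With that adjustment your argument is complete. (Also, a nonempty convex set always has nonempty relative interior, so the degenerate case is simply $V=\emptyset$.)
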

\begin{proof}
Any singular affine simplex in $V$ can be approximated by one in $\relint V$.
Cf. \cite[VIII.1(h)]{whitney2015geometric}.
\end{proof}

\begin{definition}\label{def:flatcochains}
If $V$ is a convex set we define $\Omega _\flat ^k (V)$ as the vector space of linear functionals $\CS ^{\text{aff}}_k (V)\to\R$ bounded with respect to the seminorm $|\cdot |^\flat$.
We call the elements of $\Omega _\flat ^k (V)$ \textit{flat cochains} on $V$.
We define the differential $dX$ of a cochain $X$ by the formula $\left< dX,\alpha \right>:=\left< X,\partial \alpha \right>$.
We obtain the complex $\Omega _\flat ^\bullet (V)$.
This definition is equivalent to the one given by Whitney in \cite[VIII.1(b) and VII.10]{whitney2015geometric}.
\end{definition}

The complex $\CS ^{\text{aff}}_\bullet (V)$ was covariant in $V$ with respect to affine maps, so the complex $\Omega _\flat ^\bullet (V)$ is contravariant in $V$ with respect to affine maps.

We refer the reader to \cite[IX.$14$]{whitney2015geometric} where Whitney defines the graded-commutative multiplication of flat cochains in open sets.
The multiplication is natural with respect to affine maps \cite[X.$11$]{whitney2015geometric}.
For an open convex set $V$ the complex $\Omega _\flat ^\bullet (V)$ becomes a dg-algebra, which is contravariant with respect to affine maps.
Hence, the multiplication is well defined in relatively open convex sets.
Next we wish to define the product of two flat cochains on a closed convex set $V$.
For this we need the following lemma.
\begin{lemma}\label{lem:extendingthecochaintotheclosure}
For a closed convex set $V\subset \R ^m$ the inclusion $\relint V\hookrightarrow V$ induces an isomorphism $\rho :\Omega ^\bullet _\flat (V)\to \Omega ^\bullet _\flat (\relint V)$.
\end{lemma}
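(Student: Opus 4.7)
The plan is to show that $\rho$ is a bijection at each cochain degree and that it is compatible with the differential, using only the two preceding lemmas and the definition of the flat seminorm. The key observation is that $\rho$ is the restriction of a bounded linear functional along a norm-preserving map with dense image, so it is forced to be an isometric isomorphism.

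For injectivity, suppose $X\in \Omega_\flat^k(V)$ satisfies $\rho(X)=0$. By Definition~\ref{def:flatcochains} there is a constant $C$ with $|X(\alpha)|\leq C|\alpha|^\flat$ for all $\alpha\in\CS^{\text{aff}}_k(V)$, so $X$ is continuous in the flat seminorm. By Lemma~\ref{lem:theimageisdense} the image of $\CS^{\text{aff}}_k(\relint V)\to\CS^{\text{aff}}_k(V)$ is dense with respect to $|\cdot|^\flat$, and $X$ vanishes on it; hence $X=0$.

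For surjectivity, take $Y\in \Omega_\flat^k(\relint V)$ with $|Y(\beta)|\leq C|\beta|^\flat$. Given $\alpha\in\CS^{\text{aff}}_k(V)$, use Lemma~\ref{lem:theimageisdense} to pick a sequence $\alpha_n\in\CS^{\text{aff}}_k(\relint V)$ with $|\alpha-\alpha_n|^\flat\to 0$ in $V$. By Lemma~\ref{lem:convexsetsinclusionpreservesseminorm} the flat seminorm of $\alpha_n-\alpha_m$ computed in $\relint V$ coincides with its value in $V$, so $\{\alpha_n\}$ is Cauchy in the seminorm of $\relint V$ and therefore $\{Y(\alpha_n)\}$ is Cauchy in $\R$. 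Set $\tilde Y(\alpha):=\lim_n Y(\alpha_n)$; the standard extension-by-continuity argument shows that this limit is independent of the approximating sequence, is linear in $\alpha$, and satisfies $|\tilde Y(\alpha)|\leq C|\alpha|^\flat$ (using $|\alpha_n|^\flat\to|\alpha|^\flat$). Thus $\tilde Y\in\Omega_\flat^k(V)$ and $\rho(\tilde Y)=Y$.

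Compatibility with $d$ is immediate: the boundary $\partial$ sends $\CS^{\text{aff}}_\bullet(\relint V)$ into itself, so for any $X\in\Omega_\flat^\bullet(V)$ and $\alpha\in\CS^{\text{aff}}_{k+1}(\relint V)$ we have $\rho(dX)(\alpha)=X(\partial\alpha)=\rho(X)(\partial\alpha)=d(\rho X)(\alpha)$. The only point requiring care is the interplay between the two lemmas in the surjectivity step: one must transfer Cauchy-ness from the ambient seminorm on $V$ back to the seminorm on $\relint V$ so that the boundedness estimate for $Y$ may be applied, which is exactly what the isometry in Lemma~\ref{lem:convexsetsinclusionpreservesseminorm} provides.
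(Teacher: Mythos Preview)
Your argument is correct and is precisely the standard ``isometric embedding with dense image induces an isomorphism on bounded duals'' argument that the paper's one-line proof (``Follows from Lemmas~\ref{lem:convexsetsinclusionpreservesseminorm} and~\ref{lem:theimageisdense}'') leaves to the reader. You have simply spelled out the details the paper omits; no different idea is involved.
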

\begin{proof}
Follows from Lemmas~\ref{lem:convexsetsinclusionpreservesseminorm} and ~\ref{lem:theimageisdense}.
\end{proof}

For a closed convex set $V\subset\R ^m$ we introduce the multiplication in $\Omega ^\bullet _\flat (V)$ in the way that the isomorphism of complexes $\rho :\Omega ^\bullet _\flat (V) \to \Omega ^\bullet _\flat (\relint V)$ from Lemma~\ref{lem:extendingthecochaintotheclosure} becomes an isomorphism of dg-algebras.
This multiplication and its naturality are implicit in \cite[VII.$12$]{whitney2015geometric}.
In Proposition~\ref{prp:propertiesofmultiplication} we show that the multiplication is natural with respect to affine maps.

Take $V$ a closed convex set.
The inclusion $V\hookrightarrow \Aff V$ induces the map $\pi :\Omega ^\bullet _\flat (\Aff V)\to \Omega ^\bullet _\flat (V)$.

\begin{lemma}\label{lem:piissurjective}
The map $\pi :\Omega ^\bullet _\flat (\Aff V)\to \Omega ^\bullet _\flat (V)$ is surjective.
\end{lemma}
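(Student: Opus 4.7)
The plan is to realize $\pi$ as a restriction of linear functionals and then invoke the Hahn--Banach theorem. First I would unpack the definitions: the inclusion $V\hookrightarrow \Aff V$ induces an inclusion of chain complexes $\iota :\CS ^{\text{aff}}_\bullet (V)\hookrightarrow \CS ^{\text{aff}}_\bullet (\Aff V)$, and by Definition~\ref{def:flatcochains} the map $\pi$ is the corresponding restriction on dual spaces, so that for $Y\in \Omega ^k _\flat (\Aff V)$ and an affine $k$-chain $\alpha$ in $V$ we have $\pi (Y)(\alpha )=Y(\iota (\alpha ))$.

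Next I would combine two ingredients. The first is Lemma~\ref{lem:convexsetsinclusionpreservesseminorm}, which asserts that $\iota$ identifies $\CS ^{\text{aff}}_k (V)$ with a linear subspace of $\CS ^{\text{aff}}_k (\Aff V)$ on which the flat seminorm is preserved. The second is the real Hahn--Banach theorem for seminorms: a linear functional on a subspace, bounded above in absolute value by a seminorm on the ambient space, admits a linear extension satisfying the same bound. Given $X\in \Omega ^k _\flat (V)$, Definition~\ref{def:flatcochains} supplies a constant $C\geq 0$ with $|X(\alpha )|\leq C|\alpha |^\flat$ for every $\alpha \in \CS ^{\text{aff}}_k (V)$; by Lemma~\ref{lem:convexsetsinclusionpreservesseminorm} this is exactly the restriction to the subspace $\iota (\CS ^{\text{aff}}_k (V))$ of the seminorm $C|\cdot |^\flat$ on $\CS ^{\text{aff}}_k (\Aff V)$. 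Hahn--Banach then produces a linear extension $\tilde X\colon \CS ^{\text{aff}}_k (\Aff V)\to \R$ with $|\tilde X(\beta )|\leq C|\beta |^\flat$ for all $\beta$, i.e.\ an element $\tilde X\in \Omega ^k _\flat (\Aff V)$ with $\pi (\tilde X)=X$.

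The only point requiring slight care is that Hahn--Banach is being applied with a seminorm rather than a norm, so $|\cdot |^\flat$ may have a nontrivial null space; but the classical real Hahn--Banach theorem for sublinear functionals covers this case verbatim, and any $X\in \Omega ^k _\flat (V)$ automatically vanishes on the null space of the seminorm, so the compatibility condition is satisfied. I expect this to be the main (and essentially only) subtlety. Observe that no differential or multiplicative structure enters the argument: the claim concerns only the underlying graded vector spaces, so there is no need to verify that $\tilde X$ is compatible with $d$ or with cup products.
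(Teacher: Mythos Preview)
Your argument is correct and is exactly the paper's primary proof: the paper simply writes ``Follows from Lemma~\ref{lem:convexsetsinclusionpreservesseminorm} by the Hahn--Banach theorem,'' which is precisely the restriction-plus-extension argument you spell out (your remark on the seminorm case is a helpful elaboration the paper leaves implicit). The paper also notes an alternative route via \cite[VIII.1(h)]{whitney2015geometric} together with Lemma~\ref{lem:extendingthecochaintotheclosure}, but your approach matches the main one.
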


\begin{proof}
Follows from Lemma~\ref{lem:convexsetsinclusionpreservesseminorm} by the Hahn-Banach theorem.
Alternatively, see \cite[VIII.1(h)]{whitney2015geometric} and apply Lemma~\ref{lem:extendingthecochaintotheclosure}.
\end{proof}

\begin{lemma}\label{lem:piisdgmorphism}
The map $\pi :\Omega ^\bullet _\flat (\Aff V)\to \Omega ^\bullet _\flat (V)$ is a morphism of dg-algebras.
\end{lemma}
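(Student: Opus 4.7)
The strategy is to factor the affine inclusion $\relint V \hookrightarrow \Aff V$ through $V$ and use the isomorphism of Lemma~\ref{lem:extendingthecochaintotheclosure} to transfer the dg-algebra structure. Note first that $\Aff V$, being an affine subspace of $\R^m$, is relatively open in itself, so the multiplication on $\Omega^\bullet_\flat(\Aff V)$ is the one given for open convex sets in \cite[IX.14]{whitney2015geometric}. Similarly, $\relint V$ is relatively open in $\Aff V$, so $\Omega^\bullet_\flat(\relint V)$ is a dg-algebra in that sense. The inclusion $j:\relint V \hookrightarrow \Aff V$ is an affine map between (relatively) open convex sets, so by the naturality of the multiplication recorded in \cite[X.11]{whitney2015geometric}, the induced map $j^*:\Omega^\bullet_\flat(\Aff V) \to \Omega^\bullet_\flat(\relint V)$ is a morphism of dg-algebras.

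Next, factor $j$ as $\relint V \hookrightarrow V \hookrightarrow \Aff V$. Applying the contravariant functor $\Omega^\bullet_\flat$ gives the factorization $j^* = \rho \circ \pi$, where $\rho:\Omega^\bullet_\flat(V)\to \Omega^\bullet_\flat(\relint V)$ is the map from Lemma~\ref{lem:extendingthecochaintotheclosure}. The multiplication on $\Omega^\bullet_\flat(V)$ was \emph{defined} precisely so that $\rho$ is an isomorphism of dg-algebras.

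To finish the argument, for $a,b\in\Omega^\bullet_\flat(\Aff V)$, using that both $j^* = \rho\circ\pi$ and $\rho$ are dg-algebra morphisms, I would compute
\[
\rho(\pi(ab)) \;=\; j^*(ab) \;=\; j^*(a)\cdot j^*(b) \;=\; \rho(\pi(a))\cdot \rho(\pi(b)) \;=\; \rho\bigl(\pi(a)\cdot \pi(b)\bigr),
\]
and cancel $\rho$ using that it is a bijection. Compatibility of $\pi$ with the differential is automatic, since the differential on $\Omega^\bullet_\flat$ is defined by duality with $\partial$ on affine chains, which is manifestly natural in affine maps. The only real point to be careful about is keeping track of which dg-algebra structure is being used on each of the three complexes $\Omega^\bullet_\flat(\Aff V)$, $\Omega^\bullet_\flat(V)$, $\Omega^\bullet_\flat(\relint V)$; once that is clear, the conclusion is a formal cancellation against the isomorphism $\rho$.
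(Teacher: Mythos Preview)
Your proof is correct and follows essentially the same approach as the paper: factor the inclusion $\relint V\hookrightarrow \Aff V$ through $V$, observe that the induced map $j^*=\rho\circ\pi$ is a dg-algebra morphism because it is induced by an affine map between relatively open convex sets, and then cancel the dg-algebra isomorphism $\rho$. The paper states this more tersely, but the argument is the same.
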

\begin{proof}
Consider the following diagram
\begin{equation*}
\begin{tikzcd}
\Omega ^\bullet _\flat (\Aff V) \arrow[r,"\pi"] & \Omega ^\bullet _\flat (V) \arrow[r,"\rho"] & \Omega ^\bullet _\flat (\relint V)
.\end{tikzcd}
\end{equation*}
The composition $\rho\circ\pi$ is induced by the inclusion $\relint V\hookrightarrow \Aff V$ and, hence, is a morphism of dg-algebras.
Since $\rho$ is an isomorphism of dg-algebras, $\pi$ is a morphism of dg-algebras.
\end{proof}

\begin{proposition}\label{prp:propertiesofmultiplication}
Consider $f:V\to V^\prime$ an affine map of closed convex sets.
Then the induced morphism $\Omega _\flat (f):\Omega ^\bullet _\flat (V^\prime )\to \Omega ^\bullet _\flat (V)$
preserves multiplication.
\end{proposition}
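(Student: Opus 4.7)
The plan is to reduce multiplicativity of $\Omega_\flat(f)$ to the already-established naturality of Whitney's multiplication on relatively open convex sets by passing through affine hulls. First, I extend $f:V\to V'$ uniquely to an affine map $\tilde f :\Aff V\to \Aff V'$. Since an affine subspace is its own relative interior, both $\Aff V$ and $\Aff V'$ are relatively open convex sets, so Whitney's multiplication makes $\Omega^\bullet_\flat(\Aff V)$ and $\Omega^\bullet_\flat(\Aff V')$ into dg-algebras, and the induced map $\Omega_\flat(\tilde f)$ is a morphism of dg-algebras by the naturality cited in \cite[X.$11$]{whitney2015geometric}.

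Next, contravariance of $\Omega^\bullet_\flat$ under affine maps, together with $\tilde f|_V=f$, yields a commutative square
\begin{equation*}
\begin{tikzcd}
\Omega^\bullet_\flat(\Aff V') \arrow[r, "\pi"] \arrow[d, "\Omega_\flat(\tilde f)"'] & \Omega^\bullet_\flat(V') \arrow[d, "\Omega_\flat(f)"] \\
\Omega^\bullet_\flat(\Aff V) \arrow[r, "\pi"] & \Omega^\bullet_\flat(V),
\end{tikzcd}
\end{equation*}
in which, by Lemmas~\ref{lem:piissurjective} and \ref{lem:piisdgmorphism}, the horizontal arrows are surjective morphisms of dg-algebras. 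Given $x,y\in\Omega^\bullet_\flat(V')$, I lift them to $\tilde x,\tilde y\in \Omega^\bullet_\flat(\Aff V')$ via the top $\pi$; a short diagram chase, using that both $\pi$'s and $\Omega_\flat(\tilde f)$ are multiplicative, then gives $\Omega_\flat(f)(xy)=\Omega_\flat(f)(x)\,\Omega_\flat(f)(y)$.

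The one point that I anticipate as the main subtlety — really just bookkeeping — is the use of Whitney's multiplication on the ``full'' affine hulls: Whitney's construction is stated for open subsets of an ambient $\R^m$, whereas $\Aff V$ may be a proper affine subspace. This is harmless because the flat seminorm and the product of flat cochains are intrinsic to the affine space and do not depend on an embedding into a larger Euclidean space, so $\Aff V$ can be treated as its own ambient $\R^k$ and Whitney's open-set multiplication together with its naturality apply verbatim.
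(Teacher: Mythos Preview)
Your argument is correct and follows essentially the same route as the paper: extend $f$ to $\Aff f:\Aff V\to\Aff V'$, use that $\Omega_\flat(\Aff f)$ is multiplicative because it is induced by an affine map of relatively open sets, and then deduce multiplicativity of $\Omega_\flat(f)$ from the commutative square with the surjective dg-morphisms $\pi$ (Lemmas~\ref{lem:piissurjective} and~\ref{lem:piisdgmorphism}). Your closing remark about treating $\Aff V$ as its own ambient Euclidean space is a fair clarification the paper leaves implicit.
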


\begin{proof}
Consider the following commutative diagram:
\begin{equation*}
\begin{tikzcd}[column sep=large]
\Omega ^\bullet _\flat (\Aff V^\prime) \arrow[d,"\pi"']\arrow[r,"\Omega _\flat (\Aff f)"] & \Omega ^\bullet _\flat (\Aff V)\arrow[d,"\pi"]\\
\Omega ^\bullet _\flat (V^\prime )\arrow[r,"\Omega _\flat (f)"] & \Omega ^\bullet _\flat (V)
.\end{tikzcd}
\end{equation*}
Here $\Aff f:\Aff V\to\Aff V^\prime$ is the affine extension of $f$.
The left vertical arrow is surjective by Lemma~\ref{lem:piissurjective}.
The vertical arrows are morphisms of dg-algebras by Lemma~\ref{lem:piisdgmorphism}.
The map $\Omega _\flat (\Aff f)$ is a morphism of dg-algebras as it is induced by an affine map of relatively open sets.
Therefore, $\Omega _\flat (f)$ is a morphism of dg-algebras.
\end{proof}

An order-preserving map $[n]\to [l]$ induces an affine map $\Delta ^n\to \Delta ^l$.
Hence, we obtain the simplicial dg-algebra $\Omega ^\bullet _\flat (\Delta ^-)$ defined as $[n]\mapsto \Omega ^\bullet _\flat (\Delta ^n)$.
In particular, there are face maps $\partial _i :\Omega ^\bullet _\flat (\Delta ^n)\to \Omega ^\bullet _\flat (\Delta ^{k-1})$ induced by the face embeddings $\gamma _i :\Delta ^{k-1}\to \Delta ^n$.

\subsection{The attributes of the simplicial dg-algebra $\Omega ^\bullet _\flat (\Delta ^-)$}\label{ssec:propertiesomegaflat}
\mbox{}
(1)
We define the linear maps $\JC _n:\Omega _\flat ^n (\Delta ^n)\to \R$ as
$$\JC _n(\omega) :=\left< \omega ,\Id _{\Delta ^n} \right>.$$

\begin{proposition}[Stokes' formula]\label{prp:stokesformulaforflatcochains}
For every $\eta\in\Omega _\flat ^{n-1} (\Delta ^{n})$ the following formula holds
$$\JC _n (d\eta) = \sum _{i=0} ^n (-1)^i\JC _{n-1} (\partial _i \eta ).$$
\end{proposition}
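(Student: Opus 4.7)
The plan is to prove this purely by unwinding the definitions; Stokes' formula here is essentially tautological once the dual pairing is spelled out correctly, and there is no real obstacle to overcome.

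First I would apply the definition of the differential of a flat cochain (Definition~\ref{def:flatcochains}), namely $\langle dX,\alpha\rangle=\langle X,\partial\alpha\rangle$, to the chain $\alpha=\Id_{\Delta^n}\in\CS^{\text{aff}}_n(\Delta^n)$. This gives
\begin{equation*}
\JC_n(d\eta)=\langle d\eta,\Id_{\Delta^n}\rangle=\langle\eta,\partial\Id_{\Delta^n}\rangle.
\end{equation*}

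Next I would compute the boundary $\partial\Id_{\Delta^n}$ from Definition~\ref{def:affinechains}: by construction $\partial_i\Id_{\Delta^n}=\Id_{\Delta^n}\circ\gamma_i=\gamma_i$, viewed as an affine singular $(n-1)$-simplex in $\Delta^n$, so
\begin{equation*}
\partial\Id_{\Delta^n}=\sum_{i=0}^n(-1)^i\gamma_i.
\end{equation*}
Substituting into the previous display and using linearity of the pairing yields $\JC_n(d\eta)=\sum_{i=0}^n(-1)^i\langle\eta,\gamma_i\rangle$.

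Finally I would identify each term $\langle\eta,\gamma_i\rangle$ with $\JC_{n-1}(\partial_i\eta)$. By construction the face map $\partial_i:\Omega_\flat^\bullet(\Delta^n)\to\Omega_\flat^\bullet(\Delta^{n-1})$ is the pullback $\Omega_\flat(\gamma_i)$ induced by the affine map $\gamma_i:\Delta^{n-1}\to\Delta^n$, hence dual to $\CS^{\text{aff}}(\gamma_i)$. Therefore
\begin{equation*}
\JC_{n-1}(\partial_i\eta)=\langle\partial_i\eta,\Id_{\Delta^{n-1}}\rangle=\langle\eta,\gamma_i\circ\Id_{\Delta^{n-1}}\rangle=\langle\eta,\gamma_i\rangle,
\end{equation*}
and summing with alternating signs gives the claim. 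The only step that deserves a moment's care is verifying that the dualization of the face embedding is compatible with the definition of $\partial_i$ given just before the statement; everything else is immediate from the definitions.
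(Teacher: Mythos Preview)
Your proof is correct and follows exactly the same chain of equalities as the paper's own proof: apply the definition of $\JC_n$, use $\langle d\eta,\alpha\rangle=\langle\eta,\partial\alpha\rangle$, expand $\partial\,\Id_{\Delta^n}=\sum(-1)^i\gamma_i$, and then recognize $\langle\eta,\gamma_i\rangle=\langle\partial_i\eta,\Id_{\Delta^{n-1}}\rangle=\JC_{n-1}(\partial_i\eta)$. There is nothing to add.
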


\begin{proof}
We have $$\JC _n (d\eta ) = \left< d\eta ,\Id _{\Delta ^n}\right> = \left< \eta ,\partial \,\Id _{\Delta ^n}\right> =
\sum _{i=0} ^n (-1)^i\left< \eta ,\gamma _i\right>=$$ $$=\sum _{i=0} ^n (-1)^i\left< \partial _i\eta ,\Id _{\Delta ^{n-1}} \right> = \sum _{i=0} ^n (-1)^i\JC _{n-1} (\partial _i\eta ).$$
\end{proof}

(2)
The Lipschitz functions on $\Delta ^n$ form the algebra $\Lip (\Delta ^n)$.
The correspondence $[n]\mapsto \Lip (\Delta ^n)$ gives rise to the simplicial algebra $\Lip (\Delta ^-)$.
Every Lipschitz function $f\in \Lip (\Delta ^n)$ defines a flat $0$-cochain $\zeta (f)$ on $\Delta ^n$ in the following way.
Take an affine simplex $\sigma :\lbrace v_0\rbrace =\Delta ^0\to \Delta ^n$ and set $$\left<\zeta (f),\sigma \right> := f(\sigma (v_0)).$$
It is easy to check that $0$-cochain $\zeta (f)$ is flat (see \cite[Theorem~VII.4B]{whitney2015geometric}.
We obtain the linear map $\zeta :\Lip (\Delta ^n)\to \Omega ^\bullet _\flat (\Delta ^n)$ with image in $\Omega ^0 _\flat (\Delta ^n)$.
The composition $\rho\circ\zeta :\Lip (\Delta ^n)\to \Omega ^\bullet _\flat (\Delta ^n)\xrightarrow{\cong}\Omega ^\bullet _\flat (\relint\Delta ^n)$ is a homomorphism of algebras (by the definition of multiplication on open sets \cite[IX.$14$]{whitney2015geometric}).
Hence, $\zeta$ is a homomorphism of algebras.
The map $\zeta$ clearly preserves the simplicial structure and we obtain the morphism of simplicial algebras
$$\zeta :\Lip (\Delta ^-)\to \Omega ^0 _\flat (\Delta ^-).$$

(3)
Consider the simplicial dg-algebra $\Omega ^\bullet (\Delta ^-)$ of smooth differential forms.
Every smooth form $\theta\in\Omega ^k (\Delta ^n)$ gives rise to a flat cochain $\nabla (\theta )\in \Omega ^k _\flat (\Delta ^n)$ by
$$\left< \nabla (\theta ),\sigma \right>:=\int _{\Delta ^k} \sigma ^*\theta$$
for affine $\sigma :\Delta ^k \to \Delta ^n$.
The flatness can be easily checked (see \cite[V.14 and Theorem~V.10A]{whitney2015geometric}).
This correspondence gives rise to a simplicial linear map
$$\nabla :\Omega ^\bullet (\Delta ^-)\to \Omega ^\bullet _\flat (\Delta ^-).$$
By Stokes' formula for smooth forms the differential is preserved.
The composition $$\Omega ^\bullet  (\Delta ^n)\xrightarrow{\nabla}  \Omega ^\bullet _\flat (\Delta ^n)\xrightarrow{\rho} \Omega ^\bullet _\flat (\relint \Delta ^n)$$ preserves multiplication by the definition of multiplication of flat cochains \cite[X.$14$]{whitney2015geometric}.
Hence, by the definition of multiplication in $\Omega ^\bullet _\flat (\Delta ^n)$ the map $\nabla$ preserves multiplication.
Therefore, the map $$\nabla :\Omega ^\bullet (\Delta ^-)\to \Omega ^\bullet _\flat (\Delta ^-)$$ is a morphism of simplicial dg-algebras.

(4)
We define the linear maps $\IC _n :\Omega ^n(\Delta ^n)\to \R$ as
$$\IC _n (\omega ):= \int _{\Delta ^n} \omega.$$
The diagram
\begin{equation*}
\begin{tikzcd}
\Omega ^n(\Delta ^n) \arrow[r,"\IC _n"]\arrow[d,"\nabla"] & \R\\
\Omega ^n _\flat (\Delta ^n)\arrow[ur,"\JC _n",swap]&
\end{tikzcd}
\end{equation*}
is commutative by construction.

(5)
The smooth functions form the simplicial algebra $C^\infty (\Delta ^-)$.
It embeds via $i:C^\infty (\Delta ^-)\hookrightarrow \Lip (\Delta ^-)$ in the simplicial algebra $\Lip (\Delta ^-)$.
The following diagram of morphisms of simplicial algebras is commutative:
\begin{equation*}
\begin{tikzcd}
C^\infty (\Delta ^-)\arrow[r,hook]\arrow[d,hook,"i"'] & \Omega ^\bullet (\Delta ^-)\arrow[d,"\nabla"]\\
\Lip (\Delta ^-)\arrow[r,"\zeta "]&\Omega ^\bullet _\flat(\Delta ^-)
.\end{tikzcd}
\end{equation*}

\section{The map $\Psi _A :\HS ^* (\Omega ^\bullet _{A|\R})\to \HH ^* (X,\RU _{X} [0])$}\label{sec:theconstructionofpsi}
\subsection{The pullback of an algebraic de Rham form}\label{ssec:thepullbackofanalgebraicderhamform}
Take $B$ a finitely generated $\R$-algebra.
We construct a morphism of dg-algebras $\mu (\sigma ):\Omega ^\bullet _{B|\R} \to \Omega _\flat ^\bullet (\Delta ^n)$ for every singular Lipschitz simplex $\sigma :\Delta ^n\to \spec B$.
For such $\sigma$ we define an algebra homomorphism $\theta (\sigma ):B\to \Lip (\Delta ^n)$ as $b\mapsto \hat{b} \circ\sigma$.
By the universal property of $\Omega ^\bullet _{B|\R}$ there exists a unique morphism of dg-algebras $\mu (\sigma ):\Omega _{B|\R} ^\bullet \to \Omega _\flat ^\bullet (\Delta ^n)$ making the following diagram commute:
\begin{equation*}
\begin{tikzcd}
\Omega ^\bullet _{B|\R} \arrow[r,"\mu (\sigma )",dashed] & \Omega _\flat ^\bullet (\Delta ^n)\\
B\arrow[u]\arrow[r,"\theta (\sigma )"]& \Lip (\Delta ^n)\arrow[u,"\zeta"]
.\end{tikzcd}
\end{equation*}

\begin{lemma}\label{lem:simplicialmu}
The following diagram is commutative:
\begin{equation*}
\begin{tikzcd}
\Omega ^\bullet _{B|\R} \arrow[d,"\mu (\sigma)"']\arrow[rd,"\mu (\partial _i\sigma)"] &\\
\Omega ^\bullet _\flat (\Delta ^n)\arrow[r,"\partial _i"] & \Omega ^\bullet _\flat (\Delta ^{n-1})
.\end{tikzcd}
\end{equation*}
\end{lemma}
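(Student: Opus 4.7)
The plan is to invoke the universal property of $\Omega^\bullet _{B|\R}$: since both $\partial _i\circ \mu (\sigma )$ and $\mu (\partial _i\sigma )$ are morphisms of dg-algebras $\Omega ^\bullet _{B|\R}\to \Omega ^\bullet _\flat (\Delta ^{n-1})$ (the first because $\partial _i$ is a morphism of dg-algebras by Proposition~\ref{prp:propertiesofmultiplication} applied to the affine map $\gamma _i$, the second by construction), it will suffice to verify that they agree as algebra homomorphisms on $B\subset \Omega ^0 _{B|\R}$.

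I would first unwind $\mu (\partial _i\sigma)|_B$. By the defining triangle it equals $\zeta\circ \theta (\partial _i\sigma)$. For any $b\in B$, using $\partial _i\sigma = \sigma\circ\gamma _i$ and the formula $\theta (\tau )(b)=\hat{b}\circ\tau$, I compute
\begin{equation*}
\theta (\partial _i\sigma )(b) = \hat{b}\circ\sigma \circ \gamma _i = \theta (\sigma )(b)\circ \gamma _i,
\end{equation*}
so that $\theta (\partial _i\sigma )= \Lip (\gamma _i)\circ\theta (\sigma )$, where $\Lip(\gamma _i):\Lip(\Delta ^n)\to \Lip(\Delta ^{n-1})$ denotes the $i$-th face map of the simplicial algebra $\Lip (\Delta ^-)$.

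Next I would use the simpliciality of $\zeta :\Lip(\Delta ^-)\to \Omega ^0 _\flat (\Delta ^-)$, recorded in item (2) of Subsection~\ref{ssec:propertiesomegaflat}, which yields $\zeta\circ \Lip (\gamma _i) = \partial _i\circ \zeta$. Chaining the identities,
\begin{equation*}
\mu (\partial _i\sigma )|_B = \zeta\circ \theta (\partial _i\sigma ) = \zeta\circ \Lip (\gamma _i)\circ\theta (\sigma ) = \partial _i\circ \zeta\circ\theta (\sigma ) = \partial _i\circ \mu (\sigma )|_B.
\end{equation*}
By uniqueness in the universal property of $\Omega ^\bullet _{B|\R}$, the two dg-algebra morphisms must coincide on all of $\Omega ^\bullet _{B|\R}$.

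There is no genuine obstacle; the only subtlety is to keep track of the fact that $\partial _i$ on $\Omega ^\bullet _\flat (\Delta ^-)$ is induced by the affine map $\gamma _i$ and is therefore a morphism of dg-algebras (so that the composite $\partial _i\circ\mu (\sigma)$ is legitimately a dg-algebra morphism). Once that is in place, the argument is formal: $\theta$ is tautologically natural in $\sigma$, $\zeta$ is simplicial, and the universal property propagates the degree-$0$ identity to the full de Rham complex.
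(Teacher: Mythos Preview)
Your proof is correct and follows the same strategy as the paper: verify the triangle on degree~$0$ using the identity $\theta(\partial_i\sigma)=\partial_i\circ\theta(\sigma)$ together with the simpliciality of $\zeta$, then invoke the universal property of $\Omega^\bullet_{B|\R}$. The paper packages these steps into a single commutative diagram, but the content is identical.
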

\begin{proof}
Consider the following diagram:

\begin{equation*}
\begin{tikzcd}
\Omega ^\bullet _{B|\R}\arrow[r,"\mu (\sigma)"']\arrow[rr,"\mu (\partial _i\sigma)",bend left =20] & \Omega ^\bullet _\flat (\Delta ^n)\arrow[r,"\partial _i"'] & \Omega _\flat ^\bullet (\Delta ^{n-1})\\
B \arrow[u]\arrow[r,"\theta (\sigma)"]\arrow[rr,"\theta(\partial _i \sigma)"',bend right =20] & \Lip (\Delta ^n)\arrow[u,"\zeta"]\arrow[r,"\partial _i"] & \Lip (\Delta ^{n-1})\arrow[u,"\zeta"']
.\end{tikzcd}
\end{equation*}
The right square obviously commutes.
By the definition of $\mu$ the left square and the outer contour commute.
The bottom triangle commutes by the definition of $\theta$.
Hence, by the universal property of $\Omega ^\bullet _{B|\R}$ the upper triangle also commutes.
\end{proof}

The map $\mu (\sigma )$ is natural in algebra, namely:
\begin{lemma}\label{lem:pullbackofanalgebraicformisfunctorial}
Suppose $\varphi:B^\prime\to B$ is a homomorphism of finitely generated $\R$-algebras.
Consider a commutative diagram
\begin{equation*}
\begin{tikzcd}
\Delta ^n \arrow[r,"\sigma"]\arrow[rd,"\sigma ^\prime"'] & \spec B \arrow[d, "\spec \varphi"]\\
& \spec B^\prime
\end{tikzcd}
\end{equation*}
with $\sigma$ and $\sigma ^\prime$ Lipschitz.
Then the following diagram commutes:
\begin{equation*}
\begin{tikzcd}
\Omega ^\bullet _{B|\R} \arrow[r,"\mu (\sigma)"] & \Omega _\flat ^\bullet (\Delta ^n)\\
\Omega ^\bullet _{B^\prime|\R}\arrow[u,"\Omega _\varphi"] \arrow[ru,"\mu (\sigma ^\prime)"']. & 
\end{tikzcd}
\end{equation*}
\end{lemma}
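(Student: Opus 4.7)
The plan is to use the universal property of $\Omega^\bullet_{B'|\R}$ from Subsection~\ref{ssec:algebraicderhamforms}. Both $\mu(\sigma)\circ\Omega_\varphi$ and $\mu(\sigma')$ are morphisms of dg-algebras from $\Omega^\bullet_{B'|\R}$ to $\Omega^\bullet_\flat(\Delta^n)$, so by the universal property they coincide as soon as their restrictions to $B'$ agree.

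First I would compute both restrictions explicitly using the defining diagram of $\mu$. On $B'$, the map $\mu(\sigma')$ factors as $\zeta\circ\theta(\sigma')$. On the other hand, $\Omega_\varphi$ restricts on $B'$ to $\varphi:B'\to B$, so $(\mu(\sigma)\circ\Omega_\varphi)|_{B'}=\zeta\circ\theta(\sigma)\circ\varphi$. It therefore suffices to prove the identity
\begin{equation*}
\theta(\sigma)\circ\varphi=\theta(\sigma')\qquad\text{as maps }B'\to\Lip(\Delta^n).
\end{equation*}

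For $c\in B'$, the right-hand side equals $\hat{c}\circ\sigma'$ by definition of $\theta$, while the left-hand side equals $\widehat{\varphi(c)}\circ\sigma$. By the naturality formula $\widehat{\varphi(c)}=\hat{c}\circ(\spec\varphi)$ recorded in Subsection~\ref{ssec:algebraicsets} and the assumed commutativity $\sigma'=(\spec\varphi)\circ\sigma$, we get
\begin{equation*}
\widehat{\varphi(c)}\circ\sigma=\hat{c}\circ(\spec\varphi)\circ\sigma=\hat{c}\circ\sigma',
\end{equation*}
which is the desired identity.

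There is no real obstacle here: the argument is a purely formal diagram chase, with the only nontrivial input being the already-established compatibility of the hat construction with $\spec$. Once the problem is reduced via the universal property to an equality in degree zero, everything is immediate.
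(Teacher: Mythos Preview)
Your proof is correct and follows essentially the same approach as the paper: reduce via the universal property of $\Omega^\bullet_{B'|\R}$ to checking equality on $B'$, and then verify $\theta(\sigma)\circ\varphi=\theta(\sigma')$ using the compatibility $\widehat{\varphi(c)}=\hat{c}\circ(\spec\varphi)$ together with $\sigma'=(\spec\varphi)\circ\sigma$. The paper presents this as a single commutative diagram chase, but the logical content is identical to what you wrote.
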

\begin{proof}
Consider the following diagram:
\begin{equation*}
\begin{tikzcd}
\Omega ^\bullet _{B^\prime|\R}\arrow[r,"\Omega _\varphi"']\arrow[rr,"\mu (\sigma ^\prime)",bend left =20] & \Omega ^\bullet _{B|\R}\arrow[r,"\mu (\sigma )"'] & \Omega _\flat ^\bullet (\Delta ^n)\\
B^\prime \arrow[u]\arrow[r,"\varphi"]\arrow[rr,"\theta (\sigma ^\prime)"',bend right =20] & B\arrow[u]\arrow[r,"\theta (\sigma )"] & \Lip (\Delta ^n)\arrow[u,"\zeta"']
.\end{tikzcd}
\end{equation*}
The left square obviously commutes.
By the definition of $\mu$ the right square and the outer contour commute.
The bottom triangle commutes: for $c\in B^\prime$
$$\theta (\sigma)(\varphi(c))=\widehat{\varphi(c)}\circ\sigma=\hat{c}\circ (\spec \varphi)\circ \sigma = \hat{c}\circ \sigma ^\prime = \theta(\sigma ^\prime)(c).$$
Hence, by the universal property of $\Omega ^\bullet _{B^\prime|\R}$ the upper triangle also commutes.
\end{proof}

\subsection{The map $\xi _B:\Omega ^\bullet _{B|\R}\to \CS ^\bullet _{\Lip} (\spec B)$}\label{ssec:comparisonmapforfinitelygeneratedsubalgebra}
Let $B$ be a finitely generated $\R$-algebra.
To an algebraic $n$-form $\omega\in\Omega^n _{B|\R}$ we associate a Lipschitz singular cochain $\xi _B (\omega )$.
On a Lipschitz singular simplex $\sigma :\Delta ^n \to \spec B$ we define $\xi _B (\omega )$ as
\begin{equation*}
\left< \xi _B (\omega ),\sigma \right> := \JC _n (\mu (\sigma ) (\omega ))\in\R.
\end{equation*}

\begin{proposition}\label{prp:morphismfromomega}
The above gives a morphism of complexes
$$\xi _B:\Omega^\bullet _{B|\R} \to \CS ^\bullet _{\Lip} (\spec B).$$
\end{proposition}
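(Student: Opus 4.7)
The plan is to verify directly that $\xi_B$ commutes with the differential, as the fact that $\xi_B(\omega)$ is a linear functional on Lipschitz singular chains is automatic from linearly extending the defining formula from simplices. The key ingredients will be the three facts already established: (a) $\mu(\sigma)$ is a morphism of dg-algebras, (b) the simplicial compatibility $\partial_i \circ \mu(\sigma) = \mu(\partial_i \sigma)$ from Lemma~\ref{lem:simplicialmu}, and (c) Stokes' formula for the evaluations $\JC_n$ from Proposition~\ref{prp:stokesformulaforflatcochains}.

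Concretely, given $\omega \in \Omega^n_{B|\R}$ and a Lipschitz singular $(n+1)$-simplex $\sigma: \Delta^{n+1} \to \spec B$, I would compute both sides of the desired identity $\xi_B(d\omega) = d(\xi_B(\omega))$ evaluated on $\sigma$. The right-hand side unwinds as
\begin{equation*}
\left< d(\xi_B(\omega)), \sigma \right> = \left< \xi_B(\omega), \partial \sigma \right> = \sum_{i=0}^{n+1} (-1)^i \JC_n(\mu(\partial_i \sigma)(\omega)).
\end{equation*}
Applying Lemma~\ref{lem:simplicialmu} to rewrite $\mu(\partial_i \sigma)(\omega) = \partial_i(\mu(\sigma)(\omega))$, this sum becomes
\begin{equation*}
\sum_{i=0}^{n+1} (-1)^i \JC_n\bigl(\partial_i(\mu(\sigma)(\omega))\bigr),
\end{equation*}
which by Stokes' formula equals $\JC_{n+1}\bigl(d(\mu(\sigma)(\omega))\bigr)$. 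Since $\mu(\sigma)$ is a morphism of dg-algebras, $d \circ \mu(\sigma) = \mu(\sigma) \circ d$, so this further equals $\JC_{n+1}(\mu(\sigma)(d\omega)) = \left< \xi_B(d\omega), \sigma \right>$, which is the left-hand side.

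I do not anticipate any serious obstacle here; the statement is essentially a tautological consequence of how $\xi_B$ was assembled from $\mu$, $\JC_\bullet$, and the simplicial face maps. The mildest point of care is tracking signs and indices in Stokes' formula against those in the cochain differential on $\CS^\bullet_{\Lip}(\spec B)$, but the conventions already fixed in Definition~\ref{def:affinechains} and Proposition~\ref{prp:stokesformulaforflatcochains} align with the standard singular cochain differential, so no additional sign bookkeeping is needed.
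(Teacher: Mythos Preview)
Your proof is correct and follows essentially the same approach as the paper: both verify $\xi_B(d\omega) = d(\xi_B\omega)$ on a simplex by chaining together the dg-property of $\mu(\sigma)$, Stokes' formula (Proposition~\ref{prp:stokesformulaforflatcochains}), and the simplicial compatibility of Lemma~\ref{lem:simplicialmu}. The only cosmetic difference is that the paper takes an $(n-1)$-form against an $n$-simplex and runs the chain of equalities in the opposite direction.
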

\begin{proof}
The only part that needs to be checked is that the map preserves $d$.
Take a Lipschitz singular simplex $\sigma :\Delta ^n\to \spec B$ and $\eta\in\Omega ^{n-1} _{B|\R}$, then
$$\left< \xi _B (d\eta ),\sigma \right>=\JC _n (\mu (\sigma ) (d\eta )) \overset{(2)}{=} \JC _n (d(\mu (\sigma ) (\eta ))) \overset{(3)}{=} \sum _{i=0} ^n (-1)^i \JC _{n-1} (\partial _i (\mu (\sigma ) (\eta )))\overset{(4)}{=}$$
$$\overset{(4)}{=}\sum _{i=0} ^n (-1)^i \JC _{n-1} (\mu (\partial _i \sigma ) (\eta ))= \sum _{i=0} ^n (-1)^i \left< \xi _B (\eta ),\partial _i\sigma \right> = \left< \xi _B (\omega ),\partial\sigma \right> = \left< d\xi _B (\omega ),\sigma \right>.$$
The second equality follows from the fact that $\mu (\sigma )$ is a morphism of complexes.
The third equality follows from Proposition~\ref{prp:stokesformulaforflatcochains}.
The fourth equality follows from Lemma~\ref{lem:simplicialmu}.
\end{proof}

Lemma~\ref{lem:pullbackofanalgebraicformisfunctorial} allows us to prove the naturality of $\xi$ in algebras:
\begin{lemma}\label{lem:functorialityinalgebrapsi}
Suppose $\varphi :B^\prime\to B$ is a homomorphism of finitely generated $\R$-algebras.
Then the following diagram is commutative
\begin{equation*}
\begin{tikzcd}[column sep = large]
\Omega ^\bullet _{B|\R} \arrow[r,"\xi _B"]&  \CS ^\bullet _{\Lip} (\spec B)\\
\Omega ^\bullet _{B^\prime|\R} \arrow[u,"\Omega _\varphi "]\arrow[r,"\xi _{B^\prime}"] & \CS ^\bullet _{\Lip} (\spec B^\prime)\arrow[u,"\CS _{\Lip} (\spec \varphi )"']
.\end{tikzcd}
\end{equation*}
\end{lemma}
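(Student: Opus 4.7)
My plan is to verify commutativity by evaluating both compositions on an arbitrary Lipschitz singular simplex $\sigma :\Delta ^n\to \spec B$ and on an arbitrary form $\omega\in\Omega ^n _{B^\prime|\R}$, and then reducing to Lemma~\ref{lem:pullbackofanalgebraicformisfunctorial}. Unfolding the definition of the induced map on Lipschitz cochains, the right-going-then-up composition evaluates as
$$\bigl\langle \CS _{\Lip}(\spec \varphi)(\xi _{B^\prime}(\omega)),\sigma\bigr\rangle = \bigl\langle \xi _{B^\prime}(\omega),(\spec \varphi)\circ\sigma\bigr\rangle = \JC _n\bigl(\mu((\spec \varphi)\circ\sigma)(\omega)\bigr),$$
while the up-then-right composition evaluates as
$$\bigl\langle \xi _B(\Omega _\varphi(\omega)),\sigma\bigr\rangle = \JC _n\bigl(\mu (\sigma)(\Omega _\varphi(\omega))\bigr).$$
So it suffices to establish the identity $\mu (\sigma)\circ\Omega _\varphi = \mu\bigl((\spec \varphi)\circ\sigma\bigr)$ of morphisms $\Omega ^\bullet _{B^\prime|\R}\to\Omega ^\bullet _\flat(\Delta ^n)$.

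Setting $\sigma ^\prime := (\spec \varphi)\circ\sigma$, this identity is precisely the conclusion of Lemma~\ref{lem:pullbackofanalgebraicformisfunctorial}, provided $\sigma ^\prime$ is itself a Lipschitz simplex with values in $\spec B^\prime$. The only point to verify is therefore the Lipschitz property of $\sigma ^\prime$. By the construction in Subsection~\ref{ssec:algebraicsets}, one may choose distinguished embeddings $i:\spec B\hookrightarrow \R ^m$ and $j:\spec B^\prime\hookrightarrow \R ^l$ through which $\spec \varphi$ is the restriction of a polynomial map $p:\R ^m\to\R ^l$. Since $p$ is $C^\infty$, it is Lipschitz on any compact subset of $\R ^m$, in particular on a neighborhood of the image of $\sigma$, and hence $\spec \varphi$ is Lipschitz on the image of $\sigma$. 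Consequently $\sigma ^\prime$, being the composition of a Lipschitz map with a locally Lipschitz one on a compact source, is Lipschitz, and the hypothesis of Lemma~\ref{lem:pullbackofanalgebraicformisfunctorial} is met.

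There is no real obstacle: the lemma is a formal consequence of the already-established naturality of $\mu(\sigma)$ in the algebra, together with the straightforward observation that composition with the affine-polynomial map $\spec \varphi$ preserves the Lipschitz class of a simplex. The main thing to take care of in writing up is to make sure the diagram in Lemma~\ref{lem:pullbackofanalgebraicformisfunctorial} is applied with the correct triangle $\sigma ^\prime = (\spec \varphi)\circ\sigma$, which commutes by definition, and to invoke Proposition~\ref{prp:stokesformulaforflatcochains} only implicitly via the fact that both sides are computed by the same functional $\JC _n$.
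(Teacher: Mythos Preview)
Your proposal is correct and follows exactly the approach the paper intends: the paper simply states that Lemma~\ref{lem:pullbackofanalgebraicformisfunctorial} allows one to prove this naturality and gives no further details, while you have spelled out the straightforward unfolding, including the check that $(\spec\varphi)\circ\sigma$ is Lipschitz so that the hypothesis of that lemma is met. The aside about Proposition~\ref{prp:stokesformulaforflatcochains} is unnecessary---Stokes' formula plays no role here---but the argument itself is exactly right.
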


\subsection{The map $\Phi _B:\HS ^* (\Omega ^\bullet _{B|\R}) \to\HH ^* (\spec B,\RU _{\spec B}[0])$}\label{ssec:thecomparisonmap}
\mbox{}

We define the homomorphism $\Phi _B$ as the vertical map making the following diagram commutative:
\begin{equation*}
\begin{tikzcd}
\HH ^* (\spec B,\RU _{\spec B} [0]) \arrow[r, "\HH (\epsilon )","\cong"'] & \HH ^* ({\spec B}, \CU ^\bullet _{\Lip ,{\spec B}})\\
\HS ^* (\Omega ^\bullet _{B|\R})\arrow[r,"\HS (\xi _B)"]\arrow[u,"\Phi _B",dashed] & \HS ^* (\CS ^\bullet _{\Lip} ({\spec B}))\arrow[u,"\cong","\Upsilon\circ\HS (\sh )"']
.\end{tikzcd}
\end{equation*}
Here $\HH (\epsilon )$ is an isomorphism by Corollary~\ref{cor:lipschitzcochainsresolution}.

\begin{lemma}\label{lem:phiisfunctorialinalgebra}
Let $\varphi :B^\prime\to B$ be a homomorphism of finitely generated $\R$-algebras.
Then the following diagram is commutative:
\begin{equation*}
\begin{tikzcd}
\HS ^* (\Omega ^\bullet _{B|\R})\arrow[r,"\Phi _{B}"] & \HH ^* (\spec B,\RU _{\spec B} [0]) \\
\HS ^* (\Omega ^\bullet _{B^\prime|\R})\arrow[u,"\HS (\Omega _\varphi )"]\arrow[r,"\Phi _{B^\prime}"'] & \HH ^* (\spec B^\prime,\RU _{\spec B^\prime} [0])\arrow[u, "(\spec \varphi )^*"']
.\end{tikzcd}
\end{equation*}
\end{lemma}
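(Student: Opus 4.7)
The plan is to unfold the definition of $\Phi_B$ from Subsection~\ref{ssec:thecomparisonmap}, namely $\Phi_B = \HH(\epsilon)^{-1}\circ\Upsilon\circ\HS(\sh)\circ\HS(\xi_B)$, and similarly for $\Phi_{B'}$, and then verify that the resulting ladder diagram with four horizontal rungs (corresponding to $\HS(\xi_-)$, $\HS(\sh)$, $\Upsilon$, and $\HH(\epsilon)$) is commutative. Once this is done, chaining the four commutative squares and inverting the isomorphism $\HH(\epsilon)$ yields the desired identity $\Phi_B\circ\HS(\Omega_\varphi) = (\spec\varphi)^*\circ\Phi_{B'}$.

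The top rung is obtained by applying $\HS$ to the commutative square of Lemma~\ref{lem:functorialityinalgebrapsi}; this is the only substantive input and is where the algebraic content of $\xi$, via the construction $\mu(\sigma)$, actually enters. The rung involving $\HS(\sh)$ commutes because sheafification is functorial: the morphism of presheaves of complexes $\CS_\Lip(\spec\varphi):\CS^\bullet_{\Lip,\spec B'}\to (\spec\varphi)_*\CS^\bullet_{\Lip,\spec B}$ (well defined since $\spec\varphi$ is polynomial, hence Lipschitz when restricted to compact images of singular simplices) induces a morphism between the sheafifications, making the square of global sections commute.

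The rung with $\Upsilon$ is a direct instance of Lemma~\ref{lem:functorialityofhypercohomology} applied to the morphism of complexes of sheaves $\CU^\bullet_{\Lip,\spec B'}\to (\spec\varphi)_*\CU^\bullet_{\Lip,\spec B}$ just constructed. Finally, the rung involving $\HH(\epsilon)$ commutes because the coaugmentation $\epsilon:\RU_{-}[0]\to \CU^\bullet_{\Lip,-}$ is defined by $1\mapsto 1$ in both columns, so the corresponding square of complexes of sheaves commutes tautologically, and Lemma~\ref{lem:functorialityofhypercohomology} applies once more to yield commutativity in hypercohomology. I do not anticipate any real obstacle here: the argument is a naturality bookkeeping exercise on top of Lemma~\ref{lem:functorialityinalgebrapsi}, which already encodes the nontrivial content.
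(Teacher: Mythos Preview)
Your proposal is correct and follows the same route as the paper: the paper's proof is the one-liner ``It follows directly from Lemma~\ref{lem:functorialityinalgebrapsi} and naturality of $\Upsilon\circ\HS(\sh)$,'' and what you have written simply unpacks that naturality (plus the tautological compatibility with $\HH(\epsilon)$) rung by rung. Your extra remark that $\spec\varphi$ is polynomial, hence takes Lipschitz simplices to Lipschitz simplices, is a useful point the paper leaves implicit.
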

\begin{proof}
It follows directly from Lemma~\ref{lem:functorialityinalgebrapsi} and naturality of $\Upsilon\circ\HS (\sh)$.
\end{proof}

\begin{remark}
There are other ways to define a map, analogous to $\Phi _B$.
One way is to consider semi-algebraic cochains, instead of Lipschitz ones, see \cite[]{hardt2011real}.
Another way would be to use the \textit{filtered de Rham complex}, see \cite[Proposition~$7.24$]{peters2008mixed}.
Our construction of $\Phi _B$ allows us to relate it to the classical de Rham complex $\Omega ^\bullet (M)$ in the case $B\subset C^\infty (M)$, see Lemma~\ref{lem:phiandpidiagram}.
\end{remark}

\subsection{The map $\Psi _A :\HS ^* (\Omega ^\bullet _{A|\R})\to \HH ^* (X,\RU _X [0])$}

For a topological space $X$ and a subalgebra $A\subset C(X)$ write $A$ as the filtered colimit of its finitely generated subalgebras: $A=\colim B$.
The functors $\Omega$ and $\HS$ preserve filtered colimits.
For an inclusion of finitely generated subalgebras $i:B^\prime\hookrightarrow B$ of $C(X)$ consider the diagram
\begin{equation*}
\begin{tikzcd}
\HS ^* (\Omega ^\bullet _{B|\R}) \arrow[r,"\Phi _B"]&\HH ^* (\spec B,\RU _{\spec B} [0])\arrow[r,"\Gamma ^* _B"]&\HH ^* (X,\RU _{X} [0])\\
\HS ^* (\Omega ^\bullet _{B^\prime|\R})\arrow[u,"\HS (\Omega _i)"] \arrow[r,"\Phi _{B^\prime}"]&\HH ^* (\spec B^\prime,\RU _{\spec B^\prime} [0])\arrow[u,"(\spec i)^*"]\arrow[ru,"\Gamma ^* _{B^\prime}"'].&
\end{tikzcd}
\end{equation*}
The left square is commutative by Lemma~\ref{lem:phiisfunctorialinalgebra} and the right triangle is commutative by the naturality of $\Gamma _B$ (see Subsection~\ref{ssec:algebraicsets}).
We pass to the colimit of $\Gamma ^* _B \circ \Phi _B$ over all finitely generated subalgebras $B\subset A$ and obtain the map
$$\Psi _A :\HS ^* (\Omega ^\bullet _{A|\R}) \to \HH ^* (X,\RU _{X} [0]).$$

\begin{proposition}\label{prp:functorialityofpsiinspaces}
Take $f:X\to Y$ a continuous map of topological spaces.
Suppose that $A\subset C(X)$ and $A^\prime\subset C(Y)$ are subalgebras and $\varphi :A^\prime\to A$ is a homomorphism such that the following diagram is commutative:
\begin{equation*}
\begin{tikzcd}
C(X)&A\arrow[l,hook']\\
C(Y)\arrow[u,"f^*"]&A^\prime \arrow[l,hook']\arrow[u,"\varphi"'].
\end{tikzcd}
\end{equation*}
Then the following diagram is commutative:
\begin{equation*}
\begin{tikzcd}
\HS ^* (\Omega ^\bullet _{A|\R}) \arrow[r,"\Psi _{A}"] & \HH ^* (X,\RU _{X} [0])\\
\HS ^* (\Omega ^\bullet _{A^\prime|\R}) \arrow[u,"\HS(\Omega _{\varphi})"]\arrow[r,"\Psi _{A^\prime}"] & \HH ^* (Y,\RU _{Y} [0])\arrow[u,"f^*"']
.\end{tikzcd}
\end{equation*}
\end{proposition}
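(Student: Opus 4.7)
The plan is to reduce the statement to the finitely generated level, where the two already-established naturality squares for $\Phi$ and for $\Gamma$ can be pasted together, and then pass to the filtered colimit.

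First, given a finitely generated subalgebra $B'\subset A'$, the image $\varphi(B')\subset A$ is again finitely generated, so it sits inside a finitely generated subalgebra $B\subset A$, and $\varphi$ restricts to $\bar\varphi\colon B'\to B$. The commutativity of the square $i_A\circ\varphi=f^*\circ i_{A'}$ restricts to the analogous compatibility for $\bar\varphi$, so the diagram at the end of Subsection~\ref{ssec:algebraicsets} yields the equality of continuous maps $\spec\bar\varphi\circ\Gamma_{B'}=\Gamma_{B'}\circ f$ on $X$; more precisely, $(\spec\bar\varphi)\circ\Gamma_B\circ\mathrm{id}_X=\Gamma_{B'}\circ f$.

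Next I would consider, for each such pair $(B',B)$, the composite rectangle
\[
\begin{tikzcd}
\HS^*(\Omega^\bullet_{B|\R})\arrow[r,"\Phi_B"] & \HH^*(\spec B,\RU_{\spec B}[0])\arrow[r,"\Gamma^*_B"] & \HH^*(X,\RU_X[0])\\
\HS^*(\Omega^\bullet_{B'|\R})\arrow[u,"\HS(\Omega_{\bar\varphi})"]\arrow[r,"\Phi_{B'}"] & \HH^*(\spec B',\RU_{\spec B'}[0])\arrow[u,"(\spec\bar\varphi)^*"]\arrow[r,"\Gamma^*_{B'}"] & \HH^*(Y,\RU_Y[0])\arrow[u,"f^*"].
\end{tikzcd}
\]
The left square commutes by Lemma~\ref{lem:phiisfunctorialinalgebra} applied to $\bar\varphi$. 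The right square commutes by contravariant functoriality of $\HH^*(-,\RU[0])$ applied to the identity $\Gamma_B\circ f=(\spec\bar\varphi)\circ\Gamma_{B'}$ established above. Hence the outer rectangle is commutative for every choice of $B'\subset A'$ and corresponding $B\subset A$.

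Finally, I would pass to the filtered colimit over the directed set of finitely generated subalgebras $B'\subset A'$. The assignment $B'\mapsto B=\R[\varphi(B')]$ preserves inclusions, so the rectangles above are functorially compatible in $B'$. Since $\Omega^\bullet_{-|\R}$ commutes with filtered colimits (Lemma~\ref{lem:kahlerformspreservecolimits}) and taking cohomology of a filtered colimit of complexes does so as well, the colimit of the left vertical arrows is exactly $\HS(\Omega_\varphi)$, and by definition the colimit of the horizontal composites $\Gamma^*_{B'}\circ\Phi_{B'}$ and $\Gamma^*_B\circ\Phi_B$ are $\Psi_{A'}$ and $\Psi_A$ respectively. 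Commutativity of each rectangle therefore passes to commutativity of the desired square for $\Psi_A$ and $\Psi_{A'}$. The only real obstacle in the argument is this last colimit step: one must check carefully that the diagrams assemble compatibly as $B'$ varies, so that the colimit of left verticals genuinely is $\HS(\Omega_\varphi)$; this is routine bookkeeping once one observes that $B'\mapsto B$ is monotone and that any element of $\Omega^\bullet_{A'|\R}$ has a representative in some $\Omega^\bullet_{B'|\R}$.
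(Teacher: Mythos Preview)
Your argument is correct and follows the same route as the paper's proof: the paper says only ``It follows directly from Lemma~\ref{lem:phiisfunctorialinalgebra} and naturality of $\Gamma$,'' and you have simply unpacked this into the rectangle with left square from Lemma~\ref{lem:phiisfunctorialinalgebra} and right square from the compatibility $(\spec\bar\varphi)\circ\Gamma_B=\Gamma_{B'}\circ f$ established at the end of Subsection~\ref{ssec:algebraicsets}, followed by a colimit. Two minor slips to fix: in your justification of the right square you wrote $\Gamma_B\circ f=(\spec\bar\varphi)\circ\Gamma_{B'}$, which does not typecheck; the correct identity is the one you stated earlier, $(\spec\bar\varphi)\circ\Gamma_B=\Gamma_{B'}\circ f$. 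Also, since $\varphi$ is an algebra homomorphism, $\varphi(B')$ is already a finitely generated subalgebra of $A$, so you may simply take $B=\varphi(B')$ rather than enlarging it.
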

\begin{proof}
It follows directly from Lemma~\ref{lem:phiisfunctorialinalgebra} and naturality of $\Gamma$.
\end{proof}

\section{Identifying the map $\Psi _{C^\infty (M)}$}\label{sec:theclassicalderhamcomparisonmap}

\subsection{The map $\xi ^\sm _B:\Omega ^\bullet _{B|\R}\to \CS ^\bullet _{\sm} (\spec B)$}\label{ssec:comparisonmapforfinitelygeneratedsubalgebrasmooth}
This subsection mirrors the Subsections~\ref{ssec:thepullbackofanalgebraicderhamform} and ~\ref{ssec:comparisonmapforfinitelygeneratedsubalgebra}, so the proofs will be omitted.

Take $B$ a finitely generated $\R$-algebra.
We construct a morphism of dg-algebras $\mu ^\sm (\sigma ):\Omega ^\bullet _{B|\R} \to \Omega ^\bullet (\Delta ^n)$ for every smooth singular simplex $\sigma :\Delta ^n\to \spec B$.
For such $\sigma$ we define an algebra homomorphism $\theta ^\sm (\sigma ):B\to C^\infty (\Delta ^n)$ as $b\mapsto \hat{b} \circ\sigma$.
By the universal property of $\Omega ^\bullet _{B|\R}$ there exists a unique morphism of dg-algebras $\mu ^\sm (\sigma ):\Omega _{B|\R} ^\bullet \to \Omega ^\bullet (\Delta ^n)$ making the following diagram commute:
\begin{equation*}
\begin{tikzcd}
\Omega ^\bullet _{B|\R} \arrow[r,"\mu ^\sm (\sigma )",dashed] & \Omega ^\bullet (\Delta ^n)\\
B\arrow[u]\arrow[r,"\theta ^\sm (\sigma )"]&  C^\infty (\Delta ^n)\arrow[u,hook]
.\end{tikzcd}
\end{equation*}

\begin{lemma}[cf. Lemma~\ref{lem:simplicialmu}]\label{lem:simplicialmusm}
The following diagram is commutative:
\begin{equation*}
\begin{tikzcd}
\Omega ^\bullet _{B|\R} \arrow[d,"\mu ^\sm (\sigma)"']\arrow[rd,"\mu ^\sm (\partial _i\sigma)"] &\\
\Omega ^\bullet (\Delta ^n)\arrow[r,"\partial _i"] & \Omega ^\bullet (\Delta ^{n-1})
.\end{tikzcd}
\end{equation*}
\end{lemma}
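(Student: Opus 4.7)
The plan is to mirror the proof of Lemma~\ref{lem:simplicialmu} word for word, replacing the flat-cochain apparatus by its smooth counterpart. Specifically, the role of the simplicial algebra map $\zeta : \Lip(\Delta^-) \to \Omega^0_\flat(\Delta^-)$ will be played by the canonical inclusion $C^\infty(\Delta^-) \hookrightarrow \Omega^\bullet(\Delta^-)$, which is itself a morphism of simplicial dg-algebras (viewing $C^\infty$ in degree $0$).

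Concretely, I would draw the following diagram and argue as before:
\begin{equation*}
\begin{tikzcd}
\Omega ^\bullet _{B|\R}\arrow[r,"\mu ^\sm(\sigma)"']\arrow[rr,"\mu ^\sm(\partial _i\sigma)",bend left =20] & \Omega ^\bullet (\Delta ^n)\arrow[r,"\partial _i"'] & \Omega ^\bullet (\Delta ^{n-1})\\
B \arrow[u]\arrow[r,"\theta ^\sm(\sigma)"]\arrow[rr,"\theta ^\sm(\partial _i \sigma)"',bend right =20] & C^\infty (\Delta ^n)\arrow[u,hook]\arrow[r,"\partial _i"] & C^\infty (\Delta ^{n-1})\arrow[u,hook']
.\end{tikzcd}
\end{equation*}
The bottom right square commutes because for $b \in B$ one has $\theta^\sm(\partial_i\sigma)(b) = \hat{b}\circ(\sigma\circ\gamma_i) = (\hat{b}\circ\sigma)\circ\gamma_i = \partial_i(\theta^\sm(\sigma)(b))$, i.e.\ the bottom triangle is the definition of $\theta^\sm$ on face maps. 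The right square commutes because the inclusion $C^\infty(\Delta^-) \hookrightarrow \Omega^\bullet(\Delta^-)$ is simplicial (pullback of smooth functions along affine maps agrees with pullback of smooth forms in degree $0$). The outer contour and the left square commute by the defining property of $\mu^\sm$.

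Chasing the diagram, both $\partial_i \circ \mu^\sm(\sigma)$ and $\mu^\sm(\partial_i\sigma)$ are morphisms of dg-algebras $\Omega^\bullet_{B|\R} \to \Omega^\bullet(\Delta^{n-1})$ whose restriction to $B \subset \Omega^0_{B|\R}$ equals the algebra homomorphism $\theta^\sm(\partial_i\sigma)$ (composed with the inclusion into $\Omega^0(\Delta^{n-1})$). By the universal property of $\Omega^\bullet_{B|\R}$, such an extension is unique, so the two morphisms coincide, which is exactly the commutativity of the upper triangle. There is no real obstacle: the argument is a routine transcription of the Lipschitz case, and the only thing one uses beyond formal nonsense is that the inclusion $C^\infty \hookrightarrow \Omega^\bullet$ commutes with the simplicial structure, which is immediate.
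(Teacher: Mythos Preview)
Your proof is correct and is precisely the transcription the paper intends: the paper omits the proof of this lemma, explicitly stating that the subsection mirrors Subsections~\ref{ssec:thepullbackofanalgebraicderhamform} and~\ref{ssec:comparisonmapforfinitelygeneratedsubalgebra}, so your diagram and the appeal to the universal property of $\Omega^\bullet_{B|\R}$ are exactly what is expected.
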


The map $\mu ^\sm (\sigma )$ is natural in algebra, namely:
\begin{lemma}[cf. Lemma~\ref{lem:pullbackofanalgebraicformisfunctorial}]\label{lem:pullbackofanalgebraicformisfunctorialsmooth}
Suppose $\varphi:B^\prime\to B$ is a homomorphism of finitely generated $\R$-algebras.
Consider a commutative diagram
\begin{equation*}
\begin{tikzcd}
\Delta ^n \arrow[r,"\sigma"]\arrow[rd,"\sigma ^\prime"'] & \spec B \arrow[d, "\spec \varphi"]\\
& \spec B^\prime
\end{tikzcd}
\end{equation*}
with $\sigma$ and $\sigma ^\prime$ smooth.
Then the following diagram commutes:
\begin{equation*}
\begin{tikzcd}
\Omega ^\bullet _{B|\R} \arrow[r,"\mu ^\sm (\sigma)"] & \Omega _\flat ^\bullet (\Delta ^n)\\
\Omega ^\bullet _{B^\prime|\R}\arrow[u,"\Omega _\varphi"] \arrow[ru,"\mu ^\sm(\sigma ^\prime)"']. & 
\end{tikzcd}
\end{equation*}

\end{lemma}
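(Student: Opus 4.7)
The plan is to mirror the proof of Lemma~\ref{lem:pullbackofanalgebraicformisfunctorial} verbatim, replacing the Lipschitz-cochain setting with the smooth one; since $\mu^\sm$ was constructed using the same universal property as $\mu$ and the naturality of $\hat{(\cdot)}$ used in the Lipschitz case was a purely algebraic fact (it held in $C(\spec B)$, hence also in $C^\infty(\Delta^n)$), the same argument goes through.

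Concretely, I would assemble the following diagram and check commutativity of each face separately:
\begin{equation*}
\begin{tikzcd}
\Omega ^\bullet _{B^\prime|\R}\arrow[r,"\Omega _\varphi"']\arrow[rr,"\mu ^\sm (\sigma ^\prime)",bend left =20] & \Omega ^\bullet _{B|\R}\arrow[r,"\mu ^\sm (\sigma )"'] & \Omega ^\bullet (\Delta ^n)\\
B^\prime \arrow[u]\arrow[r,"\varphi"]\arrow[rr,"\theta ^\sm (\sigma ^\prime)"',bend right =20] & B\arrow[u]\arrow[r,"\theta ^\sm (\sigma )"] & C^\infty (\Delta ^n)\arrow[u,hook]
.\end{tikzcd}
\end{equation*}
The left square commutes because $\Omega_\varphi$ restricts to $\varphi$ in degree $0$. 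The right square and outer contour commute by the definitions of $\mu^\sm(\sigma)$ and $\mu^\sm(\sigma')$ respectively, together with Lemma~\ref{lem:simplicialmusm} being irrelevant here (we only need the degree-$0$ compatibility with $\theta^\sm$).

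The only genuine computation is the commutativity of the bottom triangle, which reduces to the identity
\[\theta^\sm(\sigma)(\varphi(c))=\widehat{\varphi(c)}\circ\sigma=\hat{c}\circ(\spec\varphi)\circ\sigma=\hat{c}\circ\sigma^\prime=\theta^\sm(\sigma^\prime)(c)\]
for every $c\in B^\prime$, using the relation $\widehat{\varphi(c)}=\hat{c}\circ(\spec\varphi)$ recalled in Subsection~\ref{ssec:algebraicsets} together with the hypothesis $\sigma^\prime=(\spec\varphi)\circ\sigma$.

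Given all the outer and lower faces commute, the two composites $\mu^\sm(\sigma)\circ\Omega_\varphi$ and $\mu^\sm(\sigma^\prime)$ from $\Omega^\bullet_{B^\prime|\R}$ to $\Omega^\bullet(\Delta^n)$ agree after precomposition with the canonical map $B^\prime\to\Omega^\bullet_{B^\prime|\R}$. Since both are morphisms of dg-algebras, the universal property of $\Omega^\bullet_{B^\prime|\R}$ forces them to be equal, yielding commutativity of the upper triangle. There is no real obstacle; the only point to be careful about is that the rightmost vertical map is now the inclusion $C^\infty(\Delta^n)\hookrightarrow\Omega^\bullet(\Delta^n)$ rather than $\zeta$, but this causes no issue since smoothness of $\sigma$ guarantees $\hat{b}\circ\sigma\in C^\infty(\Delta^n)$.
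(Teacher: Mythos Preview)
Your proof is correct and is exactly the smooth analogue of the proof of Lemma~\ref{lem:pullbackofanalgebraicformisfunctorial}, which is precisely what the paper intends: the paper omits this proof, stating that the subsection mirrors the Lipschitz case. Note that the target in the statement should read $\Omega^\bullet(\Delta^n)$ rather than $\Omega_\flat^\bullet(\Delta^n)$ (a typo in the paper), as you implicitly corrected in your diagram.
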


Next we construct the map $\xi ^\sm _B:\Omega ^\bullet _{B|\R}\to \CS ^\bullet _{\sm} (\spec B)$ for a finitely generated $\R$-algebra $B$.
For an algebraic $n$-form $\omega\in\Omega ^n _{B|\R}$ and a smooth simplex $\sigma :\Delta ^n \to \spec B$ we set
\begin{equation*}
\left< \xi ^\sm _B (\omega ),\sigma \right> := \IC _n (\mu ^\sm (\sigma ) (\omega ))\in\R
\end{equation*}
(the map $\IC _n$ was defined in Paragraph~\ref{ssec:propertiesomegaflat}(4)).

\begin{proposition}[cf. Proposition~\ref{prp:morphismfromomega}]\label{prp:morphismfromomegasmooth}
The above gives a morphism of complexes
$$\xi ^\sm _B:\Omega^\bullet _{B|\R} \to \CS ^\bullet _{\sm} (\spec B).$$
\end{proposition}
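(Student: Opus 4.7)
The proposition is the smooth analogue of Proposition~\ref{prp:morphismfromomega}, so the plan is to mirror that proof essentially verbatim, substituting each Lipschitz/flat-cochain ingredient with its smooth counterpart. The only non-trivial content is that $\xi^\sm_B$ intertwines the differentials; linearity in $\omega$ is immediate from the definition, since $\mu^\sm(\sigma)$ is an algebra (hence linear) map and $\IC_n$ is a linear functional.

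First I would fix a smooth singular simplex $\sigma : \Delta^n \to \spec B$ and an algebraic form $\eta \in \Omega^{n-1}_{B|\R}$ and compute $\langle \xi^\sm_B(d\eta),\sigma\rangle$ in terms of the integral $\IC_n(\mu^\sm(\sigma)(d\eta))$. Since $\mu^\sm(\sigma)$ is a morphism of dg-algebras by construction (universal property), it commutes with $d$, giving $\IC_n(d(\mu^\sm(\sigma)(\eta)))$. Then I would invoke the classical Stokes' formula for smooth differential forms on the simplex $\Delta^n$, which is the smooth analogue of Proposition~\ref{prp:stokesformulaforflatcochains} and reads
\begin{equation*}
\IC_n(d\theta) = \int_{\Delta^n} d\theta = \sum_{i=0}^n (-1)^i \int_{\Delta^{n-1}} \gamma_i^*\theta = \sum_{i=0}^n (-1)^i \IC_{n-1}(\partial_i\theta)
\end{equation*}
for any smooth form $\theta \in \Omega^{n-1}(\Delta^n)$.

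Applying this with $\theta = \mu^\sm(\sigma)(\eta)$ and then replacing $\partial_i \mu^\sm(\sigma)(\eta)$ by $\mu^\sm(\partial_i\sigma)(\eta)$ via Lemma~\ref{lem:simplicialmusm}, I obtain $\sum_i (-1)^i \IC_{n-1}(\mu^\sm(\partial_i\sigma)(\eta)) = \sum_i (-1)^i \langle \xi^\sm_B(\eta), \partial_i\sigma\rangle$, which by the definition of the coboundary on singular cochains is exactly $\langle d\xi^\sm_B(\eta),\sigma\rangle$. Chaining the equalities yields $\xi^\sm_B(d\eta) = d\xi^\sm_B(\eta)$, as required.

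I do not anticipate any real obstacle: the three ingredients (the dg-property of $\mu^\sm(\sigma)$, the classical Stokes theorem for smooth forms, and the simplicial compatibility of Lemma~\ref{lem:simplicialmusm}) are all already in hand, and they are arranged in precisely the same pattern as in the proof of Proposition~\ref{prp:morphismfromomega}. The one bookkeeping point worth noting is that smooth singular chains on $\spec B$ are well-defined because $\spec B$ is a predifferentiable space (Subsection~\ref{ssec:algebraicsets}) and the compositions $\theta^\sm(\sigma)$ land in $C^\infty(\Delta^n)$, making $\mu^\sm(\sigma)$ genuinely valued in the smooth de Rham dg-algebra where Stokes' theorem applies.
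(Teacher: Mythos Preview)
Your proposal is correct and follows exactly the approach the paper intends: the paper omits the proof, stating that the subsection ``mirrors the Subsections~\ref{ssec:thepullbackofanalgebraicderhamform} and~\ref{ssec:comparisonmapforfinitelygeneratedsubalgebra},'' and your argument is precisely the smooth transcription of the proof of Proposition~\ref{prp:morphismfromomega}, replacing $\mu$, $\JC_n$, Proposition~\ref{prp:stokesformulaforflatcochains}, and Lemma~\ref{lem:simplicialmu} by $\mu^\sm$, $\IC_n$, the classical Stokes formula, and Lemma~\ref{lem:simplicialmusm} respectively.
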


Lemma~\ref{lem:pullbackofanalgebraicformisfunctorialsmooth} allows us to prove the naturality of $\xi ^\sm$ in algebras:
\begin{lemma}[cf. Lemma~\ref{lem:functorialityinalgebrapsi}]\label{lem:functorialityinalgebrapsismooth}

Suppose $\varphi :B^\prime\to B$ is a homomorphism of finitely generated $\R$-algebras.
Then the following diagram is commutative:
\begin{equation*}
\begin{tikzcd}[column sep = large]
\Omega ^\bullet _{B|\R} \arrow[r,"\xi ^\sm _B"]&  \CS ^\bullet _{\sm} (\spec B)\\
\Omega ^\bullet _{B^\prime|\R} \arrow[u,"\Omega _\varphi "]\arrow[r,"\xi ^\sm _{B^\prime}"] & \CS ^\bullet _{\sm} (\spec B^\prime)\arrow[u,"\CS _{\sm} (\spec \varphi )"']
.\end{tikzcd}
\end{equation*}
\end{lemma}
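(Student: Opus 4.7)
The plan is to verify commutativity by direct computation on a cochain basis, mirroring the proof of Lemma~\ref{lem:functorialityinalgebrapsi} step for step.

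First I would fix $\omega'\in\Omega^n_{B'|\R}$ and a smooth singular simplex $\sigma:\Delta^n\to\spec B$, and set $\sigma':=(\spec\varphi)\circ\sigma:\Delta^n\to\spec B'$. A preliminary point to dispatch is that $\sigma'$ really is smooth, so that it makes sense to evaluate $\xi^\sm_{B'}(\omega')$ on it: by Subsection~\ref{ssec:algebraicsets}, once generators of $B$ and $B'$ are chosen, the map $\spec\varphi$ is represented by a polynomial map between the ambient Euclidean spaces under distinguished embeddings, hence is smooth, so $\sigma'=(\spec\varphi)\circ\sigma$ inherits smoothness from $\sigma$.

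Next I would unfold both sides of the diagram on the test pair $(\omega',\sigma)$. By the definitions of $\CS_\sm(\spec\varphi)$ and of $\xi^\sm_{B'}$,
$$\langle \CS_\sm(\spec\varphi)(\xi^\sm_{B'}(\omega')),\sigma\rangle = \langle \xi^\sm_{B'}(\omega'),\sigma'\rangle = \IC_n\bigl(\mu^\sm(\sigma')(\omega')\bigr),$$
while by the definition of $\xi^\sm_B$,
$$\langle \xi^\sm_B(\Omega_\varphi(\omega')),\sigma\rangle = \IC_n\bigl(\mu^\sm(\sigma)(\Omega_\varphi(\omega'))\bigr).$$
The two are equal because Lemma~\ref{lem:pullbackofanalgebraicformisfunctorialsmooth}, applied to the pair $(\sigma,\sigma')$, says precisely that $\mu^\sm(\sigma)\circ\Omega_\varphi=\mu^\sm(\sigma')$. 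Since $\omega'$ and $\sigma$ were arbitrary, this yields the claimed equality $\xi^\sm_B\circ\Omega_\varphi=\CS_\sm(\spec\varphi)\circ\xi^\sm_{B'}$ of morphisms of complexes.

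There is no genuine obstacle here: the argument is mechanically identical to the Lipschitz case of Lemma~\ref{lem:functorialityinalgebrapsi}, with the substitutions $\JC_n$ to $\IC_n$, Lipschitz cochains to smooth cochains, and $\Omega^\bullet_\flat(\Delta^n)$ to $\Omega^\bullet(\Delta^n)$, and with Lemma~\ref{lem:pullbackofanalgebraicformisfunctorial} replaced by its smooth analogue Lemma~\ref{lem:pullbackofanalgebraicformisfunctorialsmooth}. All the real work has already been done in setting up $\mu^\sm$ and establishing its naturality in the algebra; the present lemma is just the bookkeeping consequence.
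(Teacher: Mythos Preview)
Your proposal is correct and matches the paper's approach exactly: the paper omits the proof entirely, noting at the start of the subsection that it ``mirrors'' the Lipschitz case, so your unwinding of the definitions and invocation of Lemma~\ref{lem:pullbackofanalgebraicformisfunctorialsmooth} is precisely what is intended. Your extra remark verifying that $\sigma'=(\spec\varphi)\circ\sigma$ is smooth is a helpful detail the paper leaves implicit.
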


\subsection{Comparing $\xi _B$ and $\xi ^\sm _B$}

Take $B$ a finitely generated $\R$-algebra.
\begin{lemma}\label{lem:smoothlipnablaconnection}
Take $\sigma :\Delta ^n\to \spec B$ a smooth simplex.
Then the following diagram is commutative:
\begin{equation*}
\begin{tikzcd}
\Omega ^\bullet _{B|\R} \arrow[rd,"\mu (\sigma )"']\arrow[r,"\mu ^{\sm} (\sigma )"] &\Omega ^\bullet (\Delta ^n) \arrow[d,"\nabla"]\\
 & \Omega _\flat ^\bullet (\Delta ^n).
\end{tikzcd}
\end{equation*}
\end{lemma}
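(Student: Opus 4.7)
The plan is to invoke the universal property of $\Omega^\bullet_{B|\R}$. Both composites $\nabla\circ\mu^\sm(\sigma)$ and $\mu(\sigma)$ are morphisms of dg-algebras from $\Omega^\bullet_{B|\R}$ to $\Omega^\bullet_\flat(\Delta^n)$: the former as a composition of two dg-algebra morphisms (by Paragraph~\ref{ssec:propertiesomegaflat}(3) and by the construction of $\mu^\sm(\sigma)$ in Subsection~\ref{ssec:comparisonmapforfinitelygeneratedsubalgebrasmooth}); the latter by the construction of $\mu(\sigma)$ in Subsection~\ref{ssec:thepullbackofanalgebraicderhamform}. By the universal property it therefore suffices to check agreement on the generating subspace $B=\Omega^0_{B|\R}$.

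Fix $b\in B$. By definition of $\theta^\sm$ and $\theta$, both equal $\hat{b}\circ\sigma$, but viewed respectively as an element of $C^\infty(\Delta^n)$ and as an element of $\Lip(\Delta^n)$ (smooth simplices are in particular Lipschitz, so this makes sense). Thus
\begin{equation*}
\nabla\bigl(\mu^\sm(\sigma)(b)\bigr)=\nabla\bigl(\hat{b}\circ\sigma\bigr)\quad\text{and}\quad \mu(\sigma)(b)=\zeta\bigl(\hat{b}\circ\sigma\bigr),
\end{equation*}
where on the left $\hat{b}\circ\sigma$ is regarded as a smooth $0$-form on $\Delta^n$, and on the right as a Lipschitz function. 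The required equality $\nabla(\hat{b}\circ\sigma)=\zeta(i(\hat{b}\circ\sigma))$, with $i:C^\infty(\Delta^n)\hookrightarrow\Lip(\Delta^n)$ the inclusion, is exactly the commutativity of the square recorded in Paragraph~\ref{ssec:propertiesomegaflat}(5) applied to the function $\hat{b}\circ\sigma$.

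There is no real obstacle: the entire content of the lemma is already packaged in the naturality properties (1)--(5) of Subsection~\ref{ssec:propertiesomegaflat}. The only thing to be careful about is making sure both composites are genuine dg-algebra maps, so that the universal property can be invoked and only degree $0$ needs checking; once this is observed the claim reduces to a one-line identity on functions.
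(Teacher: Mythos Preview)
Your proof is correct and follows essentially the same approach as the paper: both arguments invoke the universal property of $\Omega^\bullet_{B|\R}$ to reduce to checking equality on $B$ in degree~$0$, then appeal to the commutative square of Paragraph~\ref{ssec:propertiesomegaflat}(5) to conclude. The paper phrases this as a diagram chase with the auxiliary square involving $\theta(\sigma)$, $\theta^\sm(\sigma)$, $\zeta$, and the inclusion $C^\infty(\Delta^n)\hookrightarrow\Lip(\Delta^n)$, while you unwind the same content directly; the logical substance is identical.
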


\begin{proof}
Consider the following diagram:
\begin{equation*}
\begin{tikzcd}
\Omega ^\bullet _{B|\R}\arrow[r,"\mu ^{\sm} (\sigma )"']\arrow[rr,"\mu (\sigma)",bend left =20] & \Omega ^\bullet (\Delta ^n)\arrow[r,"\nabla"'] & \Omega _\flat ^\bullet (\Delta ^n)\\
B\arrow[u]\arrow[r,"\theta ^\sm (\sigma )"]\arrow[rr,"\theta (\sigma )"',bend right = 20] & C^\infty (\Delta ^n)\arrow[u]\arrow[r,hook] & \Lip (\Delta ^n)\arrow[u,"\zeta"']
.\end{tikzcd}
\end{equation*}
The left square and the outer contour commute by the definitions of $\mu$ and $\mu ^\sm$.
The right square commutes by Paragraph~\ref{ssec:propertiesomegaflat}(5).
The bottom triangle commutes by the definitions of $\theta$ and $\theta ^\sm$.
Hence, by the universal property of $\Omega ^\bullet _{B|\R}$ the upper triangle also commutes.
\end{proof}

\begin{lemma}
The following diagram is commutative:
\begin{equation}\label{diag:lipandsmooth}
\begin{tikzcd}
\Omega ^\bullet _{B|\R} \arrow[rd,"\xi _B"']\arrow[r,"\xi ^\sm _B"]&\CS ^\bullet _{\sm}(\spec B)\\
&\CS ^\bullet _\Lip(\spec B)\arrow[u,"\res"']. 
\end{tikzcd}
\end{equation}
Here $\res$ is the restriction of Lipschitz cochains to smooth chains.
\end{lemma}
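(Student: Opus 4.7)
The plan is to verify the equality $\xi _B = \res \circ \xi ^\sm _B$ (equivalently, that $\xi ^\sm _B$ agrees with the restriction of $\xi _B$ to smooth chains) by evaluating both sides on a smooth singular simplex. Since the two cochains in question live in $\CS ^\bullet _\sm (\spec B)$ and $\CS ^\bullet _\Lip (\spec B)$ respectively, and every smooth simplex is in particular Lipschitz, it suffices to check the identity $\langle \xi ^\sm _B (\omega ),\sigma \rangle = \langle \xi _B (\omega ),\sigma \rangle$ for an arbitrary algebraic form $\omega \in \Omega ^n _{B|\R}$ and smooth simplex $\sigma :\Delta ^n \to \spec B$.

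Unwinding the definitions given in Subsections~\ref{ssec:comparisonmapforfinitelygeneratedsubalgebra} and \ref{ssec:comparisonmapforfinitelygeneratedsubalgebrasmooth}, the left-hand side is $\IC _n (\mu ^\sm (\sigma )(\omega ))$ and the right-hand side is $\JC _n (\mu (\sigma )(\omega ))$. So the proof reduces to a short diagram chase combining two facts already at our disposal.

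The two inputs are: first, Lemma~\ref{lem:smoothlipnablaconnection}, which identifies $\mu (\sigma )(\omega ) = \nabla (\mu ^\sm (\sigma )(\omega ))$ in $\Omega ^\bullet _\flat (\Delta ^n)$ when $\sigma$ is smooth; and second, the commutative triangle from Paragraph~\ref{ssec:propertiesomegaflat}(4), namely $\JC _n \circ \nabla = \IC _n$ on $\Omega ^n (\Delta ^n)$. Chaining these yields
\[
\JC _n (\mu (\sigma )(\omega )) = \JC _n (\nabla (\mu ^\sm (\sigma )(\omega ))) = \IC _n (\mu ^\sm (\sigma )(\omega )),
\]
which is exactly the required equality.

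There is no genuine obstacle here: all the technical content (the definition of $\nabla$, its compatibility with the simplicial and multiplicative structure, and Stokes' formula in the flat setting) has already been absorbed into the statements of Lemma~\ref{lem:smoothlipnablaconnection} and Paragraph~\ref{ssec:propertiesomegaflat}(4). The proof is therefore just the one-line calculation above, and its role in the paper is to record the bridge that will let us transport results about $\xi _B$ (and hence $\Phi _B$, defined via Lipschitz cochains) to the smooth setting needed for $C^\infty (M)$.
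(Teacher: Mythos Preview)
Your proof is correct and follows exactly the paper's approach: both invoke Lemma~\ref{lem:smoothlipnablaconnection} for $\mu(\sigma) = \nabla\circ\mu^\sm(\sigma)$ and Paragraph~\ref{ssec:propertiesomegaflat}(4) for $\JC_n\circ\nabla = \IC_n$, then read off the result from the definitions of $\xi_B$ and $\xi^\sm_B$. One small slip: in your opening sentence you wrote $\xi_B = \res\circ\xi^\sm_B$, but the diagram asserts $\xi^\sm_B = \res\circ\xi_B$ (as your parenthetical and the rest of the argument correctly reflect).
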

\begin{proof}

Consider the diagram:
\begin{equation}
\begin{tikzcd}
\Omega ^\bullet _{B|\R} \arrow[r,"\mu ^{\sm}(\sigma )"]\arrow[rd,"\mu  (\sigma )",swap]& \Omega ^\bullet (\Delta ^n)\arrow[d,"\nabla",swap]\arrow[r,"\IC _n"]&\R\\
&\Omega ^\bullet _\flat (\Delta ^n)\arrow[ur,"\JC _n",swap].& 
\end{tikzcd}
\end{equation}
The left triangle is commutative by Lemma~\ref{lem:smoothlipnablaconnection}
The right triangle is commutative by Paragraph~\ref{ssec:propertiesomegaflat}(4).
Hence, the statement follows by the definitions of $\xi$ and $\xi ^\sm$. 
\end{proof}

\subsection{Morphisms $\varkappa$ and $\underline{\varkappa}$}\label{sssec:kappamorphism}
Let $M$ be a smooth manifold.
Take an open set $U\subset M$, a smooth simplex $\sigma :\Delta ^n\to U$ and a smooth $n$-form $\omega\in\Omega ^n (U)$.
We define $\left< \varkappa (\omega),\sigma \right>:=\IC _n( \sigma ^* (\omega ) )$.
By Stokes' formula we obtain the morphism of complexes $$\varkappa :\Omega ^\bullet (U)\to \CS ^\bullet _{\sm} (U).$$
This way we obtain a morphism of complexes of presheaves $\varkappa :\Omega ^\bullet _{M} \to \CS ^\bullet _{\sm ,M}$.
Taking the composition with the sheafification map we get the morphism of complexes of sheaves
$$\underline{\varkappa} :\Omega ^\bullet _{M} \to \CU ^\bullet _{\sm ,M}.$$

The following diagram is commutative:
\begin{equation}\label{diag:propertiesofkappa}
\begin{tikzcd}
\RU _M [0] \arrow[r,"\epsilon"]\arrow[rd,"\epsilon"']&\Omega^\bullet _M\arrow[d,"\underline{\varkappa}"]\\
&\CU ^\bullet _{\sm ,M}.
\end{tikzcd}
\end{equation}
The following diagram is also commutative:
\begin{equation}\label{diag:kappacommutative}
\begin{tikzcd}
\HS ^* (\CS ^\bullet _{\sm} (M))\arrow[r,"\HS (\sh )"]&\HS ^* (\CU ^\bullet _{\sm} (M))\arrow[r,"\Upsilon"]&\HH ^* (M,\CU ^\bullet _{\sm ,M})\\
&\HS ^* (\Omega ^\bullet (M))\arrow[lu,"\HS (\varkappa )"]\arrow[u,"\HS (\underline{\varkappa}(M) )"']\arrow[r,"\Upsilon"]&\HH ^* (M,\Omega ^\bullet _M)\arrow[u,"\HH(\underline{\varkappa})"']
\end{tikzcd}
\end{equation}

\subsection{Identifying the map $\Psi _B$}\label{ssec:auxiliarylemmas}
In this subsection we take $M$ a smooth manifold.

\begin{lemma}\label{lem:identmapfirstlemma}
For the inclusion of a finitely generated subalgebra $i:B\hookrightarrow C^\infty (M)$ and a smooth singular simplex $\sigma :\Delta ^n\to M$ the following diagram commutes:
\begin{equation*}
\begin{tikzcd}
\Omega ^\bullet _{B|\R}\arrow[rrd,"\mu ^{\sm} (\Gamma _B \circ\sigma )"]\arrow[d,"\Omega _i"']&&\\
\Omega ^\bullet _{C^\infty (M)|\R}\arrow[r,"\pi"']&\Omega ^\bullet (M)\arrow[r,"\sigma ^*"']&\Omega ^\bullet (\Delta ^n)
.\end{tikzcd}
\end{equation*}
\end{lemma}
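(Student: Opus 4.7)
My plan is to reduce the claim to checking agreement on the degree-$0$ part $B\subset \Omega^\bullet_{B|\R}$, and then invoke the universal property of $\Omega^\bullet_{B|\R}$. The key observation is that every map in the diagram is a morphism of dg-algebras: $\Omega_i$ is induced by the algebra homomorphism $i$; $\pi$ was constructed in Subsection~\ref{sssec:pimorphism} via the universal property; $\sigma^*$ is the pullback of smooth forms by a smooth map, which is a dg-algebra morphism; and $\mu^\sm(\Gamma_B\circ \sigma)$ is a morphism of dg-algebras by its very definition in Subsection~\ref{ssec:comparisonmapforfinitelygeneratedsubalgebrasmooth}. Hence the two compositions
\[
\sigma^*\circ \pi \circ \Omega_i,\qquad \mu^\sm(\Gamma_B\circ \sigma)
\]
are both morphisms of dg-algebras $\Omega^\bullet_{B|\R}\to \Omega^\bullet(\Delta^n)$, and by the universal property of $\Omega^\bullet_{B|\R}$ they coincide as soon as they agree on the subalgebra $B$ in degree $0$.

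So the second and only substantial step is to evaluate both sides on an arbitrary element $b\in B$. For the composition through $\Omega_i$, $\pi$ and $\sigma^*$: in degree $0$ the map $\Omega_i$ sends $b$ to $i(b)=b\in C^\infty(M)$; then $\pi$ is the identity in degree $0$, so we still have $b$; finally $\sigma^*(b)=b\circ \sigma\in C^\infty(\Delta^n)$. For the other composition, by the construction of $\mu^\sm$ its restriction to $B$ is $\theta^\sm(\Gamma_B\circ \sigma)$, which sends $b$ to $\hat{b}\circ (\Gamma_B\circ \sigma)$. Applying the identity~\eqref{eq:hatgamma}, which gives $\hat{b}\circ \Gamma_B=b$ in $C(M)$ (and therefore in $C^\infty(M)$ under the smooth version of $\Gamma_B$ from Subsection~\ref{ssec:algebraicsets}), we get $(\hat{b}\circ \Gamma_B)\circ \sigma=b\circ\sigma$, matching the other side.

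I do not expect any serious obstacle: the entire content is the compatibility~\eqref{eq:hatgamma} between $\Gamma_B$ and the functions $\hat b$, combined with the universal property that was already repeatedly used in Section~\ref{sec:theconstructionofpsi} and in the proof of Lemma~\ref{lem:pullbackofanalgebraicformisfunctorialsmooth}. The one small point to check carefully is that $\Gamma_B:M\to \spec B$ is indeed smooth so that both $\sigma^*$ and $\mu^\sm(\Gamma_B\circ\sigma)$ are defined; this is recorded in Subsection~\ref{ssec:algebraicsets}. Thus the proof will consist of two short paragraphs: one asserting that both compositions are dg-algebra morphisms and reducing the problem to degree $0$, and one computing the common value $b\circ\sigma$ on $b\in B$ using~\eqref{eq:hatgamma}.
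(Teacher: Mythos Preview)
Your proposal is correct and follows essentially the same approach as the paper: both reduce the claim, via the universal property of $\Omega^\bullet_{B|\R}$, to checking that the two dg-algebra morphisms agree on $B$ in degree~$0$, and then verify this using the identity $\hat b\circ\Gamma_B=b$ from~\eqref{eq:hatgamma}. The paper organizes the same computation as a commutative diagram with a bottom row of algebras $B\to C^\infty(M)\to C^\infty(M)\to C^\infty(\Delta^n)$, but the content is identical to your two-paragraph plan.
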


\begin{proof}
Consider the diagram:
\begin{equation*}
\begin{tikzcd}
\Omega ^\bullet _{B|\R}\arrow[rrr,"\mu ^{\sm} (\Gamma _B \circ\sigma )",bend left = 20]\arrow[r,"\Omega _i"']&\Omega ^\bullet _{C^\infty (M)|\R}\arrow[r,"\pi"']&\Omega ^\bullet (M)\arrow[r,"\sigma ^*"']&\Omega ^\bullet (\Delta ^n)\\
B\arrow[u]\arrow[rrr,"\theta ^{\sm} (\Gamma _B \circ\sigma )"',bend right = 20]\arrow[r,hook,"i"]&C^\infty (M)\arrow[r,equal]\arrow[u]&C^\infty (M)\arrow[r,"\sigma ^*"]\arrow[u]&C^\infty (\Delta ^n)\arrow[u]
.\end{tikzcd}
\end{equation*}

The left and right squares clearly commute.
The middle square commutes by the definition of $\pi$.
The outer contour commutes by the definition of $\mu ^\sm$.
For $b\in B$ we have
$$\theta ^{\sm} (\Gamma _B \circ\sigma ) (b) = \hat{b} \circ \Gamma _B \circ\sigma = b\circ \sigma = \sigma ^* (b)$$
by the definition of $\theta ^\sm$ and Equation~\ref{eq:hatgamma}.
Hence, the bottom quadrangle commutes.
By the universal property of $\Omega ^\bullet _{B|\R}$ the upper quadrangle commutes.
\end{proof}

\begin{lemma}\label{lem:integrationcommutativity}
For the inclusion of a finitely generated subalgebra $i:B\hookrightarrow C^\infty (M)$ the following diagram is commutative:
\begin{equation*}
\begin{tikzcd}
\Omega ^\bullet _{B|\R}\arrow[rr,"\xi ^\sm _B"]\arrow[d,"\Omega _i"']&&\CS ^\bullet _\sm (\spec B)\arrow[d,"\CS _\sm (\Gamma _B)"]&\\
\Omega ^\bullet _{C^\infty (M)|\R}\arrow[r,"\pi"]&\Omega ^\bullet (M)\arrow[r,"\varkappa"]&\CS ^\bullet _\sm (M)
\end{tikzcd}
\end{equation*}
\end{lemma}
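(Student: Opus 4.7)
The plan is to verify the commutativity of the diagram by evaluating both compositions on an arbitrary smooth singular simplex $\sigma:\Delta^n\to M$ and an arbitrary algebraic form $\omega\in\Omega^n_{B|\R}$, then reducing the resulting identity to Lemma~\ref{lem:identmapfirstlemma}.

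First I would unwind the upper-right composition. Pair the cochain $\CS_\sm(\Gamma_B)(\xi^\sm_B(\omega))$ against the simplex $\sigma$: by definition of the functoriality of $\CS^\bullet_\sm$ with respect to smooth maps, this pairing equals $\langle\xi^\sm_B(\omega),\Gamma_B\circ\sigma\rangle$. Since $\Gamma_B\circ\sigma:\Delta^n\to\spec B$ is a smooth simplex, the definition of $\xi^\sm_B$ given in Subsection~\ref{ssec:comparisonmapforfinitelygeneratedsubalgebrasmooth} rewrites this as $\IC_n(\mu^\sm(\Gamma_B\circ\sigma)(\omega))$.

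Next I would unwind the lower-left composition. Pair $\varkappa(\pi(\Omega_i(\omega)))$ against $\sigma$; by the definition of $\varkappa$ in Subsection~\ref{sssec:kappamorphism} this equals $\IC_n(\sigma^*(\pi(\Omega_i(\omega))))$. At this point the two expressions to compare are
\begin{equation*}
\IC_n\bigl(\mu^\sm(\Gamma_B\circ\sigma)(\omega)\bigr) \quad\text{and}\quad \IC_n\bigl(\sigma^*\pi\Omega_i(\omega)\bigr).
\end{equation*}

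The equality of the two integrands is exactly the content of Lemma~\ref{lem:identmapfirstlemma}, which asserts that the morphism of dg-algebras $\mu^\sm(\Gamma_B\circ\sigma):\Omega^\bullet_{B|\R}\to\Omega^\bullet(\Delta^n)$ factors as $\sigma^*\circ\pi\circ\Omega_i$. Applying $\IC_n$ to both sides of this equality finishes the proof. There is no real obstacle here: once both paths are unfolded into the language of $\mu^\sm$ and $\IC_n$, the lemma is an immediate corollary of the previous one, and the main task is really just bookkeeping about the definitions of $\xi^\sm_B$, $\varkappa$, and the pullback of cochains along $\Gamma_B$.
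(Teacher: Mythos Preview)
Your proof is correct and follows exactly the same approach as the paper: both unwind the two compositions by pairing against a smooth simplex $\sigma$, reduce to the comparison of $\IC_n(\mu^\sm(\Gamma_B\circ\sigma)(\omega))$ with $\IC_n(\sigma^*\pi\Omega_i(\omega))$, and then invoke Lemma~\ref{lem:identmapfirstlemma}.
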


\begin{proof}
For a form $\omega\in\Omega ^\bullet _{B|\R}$ we have
$$\left< \CS _\sm (\Gamma _B)( \xi ^\sm _B(\omega )),\sigma \right> =\left< \xi ^\sm _B(\omega),\Gamma _B \circ\sigma \right>= \IC _n (\mu ^{\sm} (\Gamma _B \circ\sigma ) (\omega ))\overset{(3)}{=}$$
$$\overset{(3)}{=}\IC _n (\sigma ^*(\pi (\Omega _i (\omega )))) =\left< \varkappa (\pi (\Omega _i (\omega ))),\sigma\right>.$$
The equality (3) follows from Lemma~\ref{lem:identmapfirstlemma}.
\end{proof}

\begin{lemma}\label{lem:phiandpidiagram}
For the inclusion of a finitely generated subalgebra $i:B\hookrightarrow C^\infty (M)$ the following diagram is commutative:
\begin{equation*}\label{diag:psicommutativelemma}
\begin{tikzcd}
\HS ^*(\Omega ^\bullet _{B|\R})\arrow[rr,"\Phi _B"]\arrow[d,"\HS (\Omega _i)"']&&\HH ^* (\spec B,\RU _{\spec B} [0])\arrow[d,"\Gamma _B ^*"]\\
\HS ^*(\Omega ^\bullet _{C^\infty (M)|\R})\arrow[r,"\HS (\pi)"]&\HS ^*(\Omega ^\bullet (M))&\HH ^* (M,\RU _{M} [0])\arrow[l,"\Theta"].
\end{tikzcd}
\end{equation*}
\end{lemma}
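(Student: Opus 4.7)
The plan is to apply $\HS(\varkappa): \HS^*(\Omega^\bullet(M)) \to \HS^*(\CS^\bullet_\sm(M))$ to both composites in the diagram; since $\underline{\varkappa}$ is a quasi-isomorphism (both $\Omega^\bullet_M$ and $\CU^\bullet_{\sm,M}$ resolve $\RU_M[0]$ by Lemma~\ref{lem:singularcochainsquasiisomorphism}(2), and these sheaves are acyclic so $\HS(\varkappa)$ is also an isomorphism), it suffices to verify the resulting equality of maps into $\HS^*(\CS^\bullet_\sm(M))$. For the right-hand side, Lemma~\ref{lem:integrationcommutativity} gives $\varkappa \circ \pi \circ \Omega_i = \CS_\sm(\Gamma_B) \circ \xi^\sm_B$, and Diagram~\ref{diag:lipandsmooth} factors $\xi^\sm_B = \res \circ \xi_B$; thus
\[
\HS(\varkappa) \circ \HS(\pi) \circ \HS(\Omega_i) = \HS(\CS_\sm(\Gamma_B)) \circ \HS(\res) \circ \HS(\xi_B).
\]

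For the left-hand side, I would unwind the definitions. By construction, $\Phi_B$ is $\HS(\xi_B)$ transported through the isomorphism $\HS^*(\CS^\bullet_\Lip(\spec B)) \xrightarrow{\HS(\sh)} \HS^*(\CU^\bullet_{\Lip,\spec B}(\spec B)) \xrightarrow{\Upsilon} \HH^*(\spec B, \CU^\bullet_{\Lip,\spec B}) \xleftarrow{\HH(\epsilon)} \HH^*(\spec B, \RU_{\spec B}[0])$. Using Diagrams~\ref{diag:propertiesofkappa} and~\ref{diag:kappacommutative}, the composition $\HS(\varkappa) \circ \Theta$ coincides with the analogous chain-level isomorphism built on $M$ from $\HS(\sh)$, $\Upsilon$, and $\HH(\epsilon)^{-1}$ applied to smooth singular cochains. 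Cochain pullback along $\Gamma_B$ yields a morphism of complexes of sheaves $\CU^\bullet_{\Lip,\spec B} \to (\Gamma_B)_*\CU^\bullet_{\sm,M}$ compatible with both coaugmentations; feeding this and the constant-sheaf morphism $\RU_{\spec B}\to (\Gamma_B)_*\RU_M$ into Lemma~\ref{lem:functorialityofhypercohomology} shows that, under the two chain-level realizations, $\Gamma_B^*$ corresponds precisely to $\HS(\CS_\sm(\Gamma_B)) \circ \HS(\res)$. Combining these identifications yields
\[
\HS(\varkappa) \circ \Theta \circ \Gamma_B^* \circ \Phi_B = \HS(\CS_\sm(\Gamma_B)) \circ \HS(\res) \circ \HS(\xi_B),
\]
matching the right-hand side.

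The proof is thus a diagram chase whose only substantive ingredient is Lemma~\ref{lem:integrationcommutativity}. The main difficulty is organizational: assembling the naturality of $\Upsilon$, $\HS(\sh)$, and $\HH(\epsilon)$ across three cochain models (smooth differential forms on $M$, smooth singular cochains on $M$, and Lipschitz singular cochains on $\spec B$) related by the single continuous map $\Gamma_B$, and drawing a single coherent diagram whose commutativity is forced by the already-established naturalities.
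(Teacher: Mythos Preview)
Your proposal is correct and follows essentially the same route as the paper: both arguments post-compose with the de Rham comparison on $M$ (you via $\HS(\varkappa)$, the paper via $\Theta^{-1}$, which by Diagram~\ref{diag:bottomrighttriangle} amounts to the same thing), then reduce to the equality $\HS(\CS_\sm(\Gamma_B))\circ\HS(\res)\circ\HS(\xi_B)$ on both sides using Lemma~\ref{lem:integrationcommutativity}, Diagram~\ref{diag:lipandsmooth}, and the naturality of $\Upsilon$, $\HS(\sh)$, and $\HH(\epsilon)$ packaged in Diagram~\ref{diag:rightpentagon}. The paper simply makes the diagram chase explicit in three drawn diagrams where you describe it in prose.
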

\begin{proof}

Consider the diagram:
\begin{equation}\label{diag:rightpentagon}
\begin{tikzcd}
\HS ^* (\CS ^\bullet _{\Lip} (\spec B))\arrow[r,"\Upsilon\circ\HS (\sh )","\cong"']\arrow[d,"\HS (\res )"']&\HH ^* (\spec B,\CU ^\bullet _{\Lip ,\spec B})\arrow[d,"\HH (\res )"']&\\
\HS ^* (\CS ^\bullet _{\sm} (\spec B))\arrow[d,"\HS (\CS _\sm (\Gamma _B))"']\arrow[r,"\Upsilon\circ\HS (\sh )","\cong"']&\HH ^* (\spec B,\CU ^\bullet _{\sm ,\spec B})\arrow[d,"\Gamma _B ^*"']& \HH ^*(\spec B ,\RU _{\spec B}[0])\arrow[lu,"\HH (\epsilon )"',"\cong",bend right = 15]\arrow[l,"\HH (\epsilon )"']\arrow[d,"\Gamma _B ^*"]\\
\HS ^* (\CS ^\bullet _{\sm } (M))\arrow[r,"\Upsilon\circ\HS (\sh )","\cong"']&\HH ^* (M,\CU ^\bullet _{\sm ,M})&\HH ^* (M ,\RU _{M}[0])\arrow[l,"\HH (\epsilon )"',"\cong"]\arrow[d,"\HH (\epsilon)","\cong"']\\
\HS ^* (\Omega ^\bullet (M))\arrow[rr,"\Upsilon"]\arrow[u,"\HS (\varkappa)"] & & \HH ^* (M, \Omega ^\bullet _M)\arrow[lu,"\HH (\underline{\varkappa})"]
\end{tikzcd}
\end{equation}
The diagram without the bottom row commutes for obvious reasons.
The bottom left quadrangle commutes by Diagram~\ref{diag:kappacommutative}.
The bottom right triangle commutes by Diagram~\ref{diag:propertiesofkappa}.
We do not know if the morphism $\HH (\epsilon)$ in the middle row is an isomorphism.

Next, we consider the following diagram:
\begin{equation}\label{diag:bottomrighttriangle}
\begin{tikzcd}[column sep = 0.8in]
\HS ^* (\CS ^\bullet _{\sm} (M))\arrow[r,"\HH (\epsilon ) ^{-1}\circ\Upsilon\circ\HS (\sh )"]&\HH ^* (M, \RU _{M} [0])\arrow[d,"\HH (\epsilon)","\cong"']\\
\HS ^* (\Omega ^\bullet (M))\arrow[u,"\HS (\varkappa)"]\arrow[r,"\Upsilon"']\arrow[ru,"\Theta ^{-1}"]&\HH ^* (M, \Omega ^\bullet _M).
\end{tikzcd}
\end{equation}
By the bottom stage of the previous diagram the outer contour commutes.
The bottom triangle commutes by the definition of $\Theta$, Subsection~\ref{sssec:thetamorphism}.
The vertical map $\HH (\epsilon)$ is an isomorphism.
Therefore, the upper triangle is commutative.

Consider the following diagram:
\begin{equation*}
\begin{tikzcd}[column sep = 0.8in]
\HS ^* (\Omega ^\bullet _{B|\R})\arrow[dd,"\HS (\Omega _i)"]\arrow[rr,"\Phi _B",bend left=15]\arrow[r,"\HS (\xi _B)"]\arrow[rd,"\HS (\xi _B ^\sm)"']&\HS ^* (\CS ^\bullet _{\Lip} (\spec B))\arrow[d,"\HS (\res )"]\arrow[r,"\HH (\epsilon ) ^{-1}\circ\Upsilon\circ\HS (\sh )"']&\HH ^* (\spec B, \RU _{\spec B} [0])\arrow[dd,"\Gamma _B ^*"]\\
&\HS ^* (\CS ^\bullet _{\sm} (\spec B))\arrow[d,"\HS (\CS _\sm (\Gamma _B))"]&\\
\HS ^* (\Omega ^\bullet _{C^\infty (M)|\R})\arrow[rd,"\HS (\pi)"']&\HS ^* (\CS ^\bullet _{\sm} (M))\arrow[r,"\HH (\epsilon ) ^{-1}\circ\Upsilon\circ\HS (\sh )"']&\HH ^* (M, \RU _{M} [0])\\
&\HS ^* (\Omega ^\bullet (M))\arrow[u,"\HS (\varkappa)"]\arrow[ru,"\Theta ^{-1}"']&
.\end{tikzcd}
\end{equation*}
The upper triangle is commutative by the definition of $\Phi _B$.
The left pentagon is commutative by Lemma~\ref{lem:integrationcommutativity}.
The left triangle is commutative by Diagram~\ref{diag:lipandsmooth}.
The right pentagon is commutative by Diagram~\ref{diag:rightpentagon}.
The bottom right triangle commutes by Diagram~\ref{diag:bottomrighttriangle} and $\Theta$ is an isomorphism.
Thus, the whole diagram is commutative.
Therefore, the claim follows.
\end{proof}

\subsection{The calculation of $\Psi _{C^\infty (M)}$}

In the previous section we have constructed a morphism $\Psi _A:\HS ^* (\Omega ^\bullet _{A|\R})\to \HH ^* (X,\RU _{X} [0])$ for every topological space $X$ and subalgebra $A\subset C(X)$.
In case $X=M$ a smooth manifold and $A=C^\infty (M)$ we would like to calculate this map explicitly.

By passing in Lemma~\ref{lem:phiandpidiagram} to colimit over all finitely generated subalgebras $B\subset C^\infty (M)$ we get
\begin{theorem}\label{thm:compositionproposition}
The following diagram is commutative:
\begin{equation*}\label{diag:psicommutativelemma}
\begin{tikzcd}
\HS ^*(\Omega ^\bullet _{C^\infty (M)|\R})\arrow[rr,"\Psi _{C^\infty (M)}",bend left =15]\arrow[r,"\HS (\pi)"']&\HS ^*(\Omega ^\bullet (M))&\HH ^* (M,\RU _{M} [0])\arrow[l,"\Theta"].
\end{tikzcd}
\end{equation*}
\end{theorem}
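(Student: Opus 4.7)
The plan is to deduce Theorem~\ref{thm:compositionproposition} directly from Lemma~\ref{lem:phiandpidiagram} by passing to the filtered colimit over all finitely generated $\R$-subalgebras $B \subset C^\infty(M)$. By definition (Section~\ref{sec:theconstructionofpsi}), $\Psi_{C^\infty(M)}$ is the colimit of the maps $\Gamma_B^* \circ \Phi_B$, and since $\Omega^\bullet$ preserves filtered colimits (Lemma~\ref{lem:kahlerformspreservecolimits}) and taking cohomology does as well, the domain $\HS^*(\Omega^\bullet_{C^\infty(M)|\R})$ is identified with $\colim_B \HS^*(\Omega^\bullet_{B|\R})$, with transition maps $\HS(\Omega_{i_B})$ for $i_B : B \hookrightarrow C^\infty(M)$.

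For each $B$, Lemma~\ref{lem:phiandpidiagram} supplies the identity
$$\Theta \circ (\Gamma_B^* \circ \Phi_B) = \HS(\pi) \circ \HS(\Omega_{i_B}).$$
I would then verify that both sides are compatible with the transition maps $\HS(\Omega_j)$ for inclusions $j : B' \hookrightarrow B$ of finitely generated subalgebras: on the left this follows from Lemma~\ref{lem:phiisfunctorialinalgebra} together with the naturality of $\Gamma$ recorded in Subsection~\ref{ssec:algebraicsets}; on the right it is automatic from the functoriality of $\HS \circ \Omega$. The two compatible families therefore descend to maps out of the colimit, and by the universal property the left-hand side becomes $\Theta \circ \Psi_{C^\infty(M)}$ while the right-hand side becomes $\HS(\pi)$ (independent of $B$), which is precisely the asserted commutativity.

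The substantive work has already been carried out in Lemma~\ref{lem:phiandpidiagram} (and in the auxiliary Lemmas~\ref{lem:identmapfirstlemma}--\ref{lem:integrationcommutativity} identifying $\mu$ with $\mu^\sm$ followed by $\nabla$ on smooth simplices), so there is no genuine obstacle at this stage; the proof is essentially a cofinality argument. The only point meriting explicit verification is the restriction compatibility described above, which ensures the identity descends to the colimit.
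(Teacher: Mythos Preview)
Your proposal is correct and follows the same route as the paper: the paper's proof is literally the single sentence ``By passing in Lemma~\ref{lem:phiandpidiagram} to colimit over all finitely generated subalgebras $B\subset C^\infty (M)$ we get [the theorem].'' Your write-up simply makes explicit the colimit bookkeeping (compatibility with transition maps via Lemma~\ref{lem:phiisfunctorialinalgebra} and naturality of $\Gamma$) that the paper leaves to the reader.
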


\section{The composition $\HH ^* (X,\RU _X[0] )\xrightarrow{\Lambda _{\FC}} \HS ^* (\Omega ^\bullet _{\FC (X)|\R})\xrightarrow{\Psi _{\FC (X)}} \HH ^* (X, \RU _X [0])$}\label{sec:composition}

\begin{lemma}\label{lem:compositionisidentityforpolyhedra}
Take $M$ a compact smooth manifold.
Then the following diagram is commutative:
\begin{equation*}
\begin{tikzcd}
&\HS ^* (\Omega ^\bullet _{C^\infty (M)|\R} )\arrow[dr,"\Psi _{C^\infty (M)}"]&\\
\HH ^* (M,\RU _M [0] )\arrow[ur,"\Lambda _{C^\infty _M}"]\arrow[rr,"\Id"]&&\HH ^* (M,\RU _M [0] )
.\end{tikzcd}
\end{equation*}
\end{lemma}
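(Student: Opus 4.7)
The plan is to combine the two identifications we already have for the smooth‐manifold case: Theorem~\ref{thm:thecompositionforsmoothfunctions} identifies $\HS(\pi)\circ\Lambda_{C^\infty_M}$ with the canonical isomorphism $\Theta$, and Theorem~\ref{thm:compositionproposition} identifies $\Psi_{C^\infty(M)}$ with $\Theta^{-1}\circ\HS(\pi)$. Composing these two should immediately give the identity, so the proof is essentially a diagram‐chase with no new input.

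Concretely, I would first recall from Theorem~\ref{thm:thecompositionforsmoothfunctions} that
\begin{equation*}
\HS(\pi)\circ\Lambda_{C^\infty_M}=\Theta\colon \HH^*(M,\RU_M[0])\to\HS^*(\Omega^\bullet(M)).
\end{equation*}
Next I would recall from Theorem~\ref{thm:compositionproposition} that the triangle
\begin{equation*}
\begin{tikzcd}
\HS^*(\Omega^\bullet_{C^\infty(M)|\R})\arrow[r,"\HS(\pi)"]\arrow[rd,"\Psi_{C^\infty(M)}"']&\HS^*(\Omega^\bullet(M))\\
&\HH^*(M,\RU_M[0])\arrow[u,"\Theta"']
\end{tikzcd}
\end{equation*}
commutes, i.e., $\Theta\circ\Psi_{C^\infty(M)}=\HS(\pi)$.

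Putting these together,
\begin{equation*}
\Theta\circ\Psi_{C^\infty(M)}\circ\Lambda_{C^\infty_M}=\HS(\pi)\circ\Lambda_{C^\infty_M}=\Theta.
\end{equation*}
Since $\Theta$ is an isomorphism (as noted in Subsection~\ref{sssec:thetamorphism}), we may cancel it on the left to conclude $\Psi_{C^\infty(M)}\circ\Lambda_{C^\infty_M}=\Id$, which is precisely the content of the lemma.

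There is no real obstacle here: all the hard work has already been done in Theorems~\ref{thm:thecompositionforsmoothfunctions} and~\ref{thm:compositionproposition}, where the sheaf‐theoretic morphism $\Lambda_{C^\infty_M}$ and the colimit‐of‐spectra construction of $\Psi_{C^\infty(M)}$ were both pinned down in terms of the classical de Rham isomorphism $\Theta$. The only thing to check is that the cancellation step is legitimate, which it is because $\Theta$ is invertible.
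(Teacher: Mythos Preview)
Your proof is correct and follows essentially the same approach as the paper: both combine Theorem~\ref{thm:thecompositionforsmoothfunctions} (giving $\HS(\pi)\circ\Lambda_{C^\infty_M}=\Theta$) with Theorem~\ref{thm:compositionproposition} (giving $\Theta\circ\Psi_{C^\infty(M)}=\HS(\pi)$), then cancel the isomorphism $\Theta$.
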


\begin{proof}
Consider the following diagram:
\begin{equation*}
\begin{tikzcd}
\HS ^* (\Omega ^\bullet _{C^\infty (M)|\R} )\arrow[dr,"\Psi _{C^\infty (M)}"]\arrow[r,"\HS (\pi )"]&\HS ^* (\Omega ^\bullet (M))\\
\HH ^* (M,\RU _M [0] )\arrow[u,"\Lambda _{C^\infty _M}"]\arrow[r,"\Id"]&\HH ^* (M,\RU _M [0] )\arrow[u,"\Theta"']
.\end{tikzcd}
\end{equation*}
The right triangle commutes by Theorem~\ref{thm:compositionproposition}.
The outer contour is commutative by Theorem~\ref{thm:thecompositionforsmoothfunctions} .
Since $\Theta$ is an isomorphism, the left triangle also commutes.
\end{proof}

\begin{corollary}\label{cor:compositionforsmooth}
For a compact smooth manifold $M$ the following diagram is commutative:
\begin{equation*}
\begin{tikzcd}
&\HS ^* (\Omega ^\bullet _{C(M)|\R} )\arrow[dr,"\Psi _{C(M)}"]&\\
\HH ^* (M,\RU _M [0] )\arrow[ur,"\Lambda _{C_M}"]\arrow[rr,"\Id "]&&\HH ^* (M, \RU _M [0])
.\end{tikzcd}
\end{equation*}
\end{corollary}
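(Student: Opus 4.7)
The strategy is to reduce the statement to Lemma~\ref{lem:compositionisidentityforpolyhedra} (which handles the smooth case) using the naturality of both $\Lambda$ and $\Psi$ applied to the sheaf inclusion $i: C^\infty_M \hookrightarrow C_M$. Note that $i$ makes sense here because $C^\infty_M$ is a soft subsheaf of $C_M$ (both sheaves being soft on the compact smooth manifold $M$).

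First I would apply Proposition~\ref{prp:lambdaisfunctorial} to the morphism of ringed spaces $(\Id_M, i):(M, C_M)\to (M, C^\infty_M)$ (note the direction: $i$ is a sheaf morphism $C^\infty_M \to (\Id_M)_* C_M = C_M$). This yields the commutative square
\begin{equation*}
\begin{tikzcd}
\HH^*(M,\RU_M[0])\arrow[r,"\Lambda_{C_M}"]\arrow[d,equal] & \HS^*(\Omega^\bullet_{C(M)|\R})\\
\HH^*(M,\RU_M[0])\arrow[r,"\Lambda_{C^\infty_M}"] & \HS^*(\Omega^\bullet_{C^\infty(M)|\R})\arrow[u,"\HS(\Omega_{i(M)})"'],
\end{tikzcd}
\end{equation*}
so $\Lambda_{C_M}=\HS(\Omega_{i(M)})\circ\Lambda_{C^\infty_M}$.

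Next I would apply Proposition~\ref{prp:functorialityofpsiinspaces} with $X=Y=M$, $f=\Id_M$, $A=C(M)$, $A^\prime=C^\infty(M)$, and $\varphi=i(M):C^\infty(M)\hookrightarrow C(M)$; the required compatibility with the inclusions into $C(M)$ is trivial. This gives the identity $\Psi_{C(M)}\circ\HS(\Omega_{i(M)})=\Psi_{C^\infty(M)}$.

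Composing the two displays, I obtain
\begin{equation*}
\Psi_{C(M)}\circ\Lambda_{C_M}=\Psi_{C(M)}\circ\HS(\Omega_{i(M)})\circ\Lambda_{C^\infty_M}=\Psi_{C^\infty(M)}\circ\Lambda_{C^\infty_M},
\end{equation*}
and the latter equals $\Id$ by Lemma~\ref{lem:compositionisidentityforpolyhedra}. There is no substantive obstacle: the argument is a purely formal concatenation of two naturality diagrams with the previously established smooth-case splitting, the only thing to verify carefully being that the arrows in the naturality statements point the correct way for the sheaf inclusion $C^\infty_M\hookrightarrow C_M$.
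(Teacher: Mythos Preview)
Your proof is correct and is essentially the same as the paper's: both use the sheaf inclusion $C^\infty_M\hookrightarrow C_M$, invoke Proposition~\ref{prp:lambdaisfunctorial} and Proposition~\ref{prp:functorialityofpsiinspaces} for the naturality of $\Lambda$ and $\Psi$, and reduce to Lemma~\ref{lem:compositionisidentityforpolyhedra}. The paper packages these steps into a single three-triangle diagram, while you write out the composition linearly, but the content is identical.
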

\begin{proof}
The inclusion morphism of sheaves $C^\infty _M \hookrightarrow C _M$ allows us to consider the diagram
\begin{equation*}
\begin{tikzcd}
\HS ^* (M,\RU _M [0])\arrow[r,"\Lambda _{C_M}"]\arrow[rd,"\Id"']\arrow[rr,"\Lambda _{C^\infty _M}",bend left =15]&\HS ^* (\Omega ^\bullet _{C(M)|\R} )\arrow[d,"\Psi _{C(M)}"]&\HS ^* (\Omega ^\bullet _{C^\infty (M)|\R})\arrow[l]\arrow[ld,"\Psi _{C^\infty (M)}"]\\
&\HH ^* (M, \RU _M [0]).&
\end{tikzcd}
\end{equation*}
The outer contour is commutative by Lemma~\ref{lem:compositionisidentityforpolyhedra}.
The right triangle is commutative by Proposition~\ref{prp:functorialityofpsiinspaces}.
The upper triangle is commutative by Proposition~\ref{prp:lambdaisfunctorial}.
Hence, the left triangle is commutative.
\end{proof}

\begin{corollary}\label{cor:mainsplittingcor}
For a compact Hausdorff space $X$ the following diagram is commutative:
\begin{equation*}
\begin{tikzcd}
&\HS ^* (\Omega ^\bullet _{C(X)|\R} )\arrow[dr,"\Psi _{C(X)}"]&\\
\HH ^* (X,\RU _X [0] )\arrow[ur,"\Lambda _{C_X}"]\arrow[rr,"\Id"]&&\HH ^* (X,\RU _X [0] )
.\end{tikzcd}
\end{equation*}
\end{corollary}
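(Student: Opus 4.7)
The plan is to reduce the compact Hausdorff case to the compact smooth manifold case of Corollary~\ref{cor:compositionforsmooth} via a Čech-theoretic approximation. Concretely, I want to realize every class $\alpha \in \HH^n(X, \RU_X[0])$ as the pullback of a cohomology class on a compact smooth manifold along a continuous map from $X$, and then invoke the naturality of $\Lambda$ and $\Psi$.

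Fix $\alpha \in \HH^n(X, \RU_X[0])$. Since $X$ is paracompact Hausdorff, sheaf cohomology with values in the constant sheaf coincides with Čech cohomology, so $\alpha$ arises as $\nu^* \tilde\alpha$ for some class $\tilde\alpha \in \HS^n(|N(\mathcal{U})|, \R)$ on the geometric realization of the nerve of a sufficiently fine finite open cover $\mathcal{U}$ of $X$; here $\nu : X \to |N(\mathcal{U})| \subset \R^{|\mathcal{U}|}$ is the nerve map built from a partition of unity subordinate to $\mathcal{U}$. Taking a compact regular neighborhood $M$ of $|N(\mathcal{U})|$ in $\R^{|\mathcal{U}|}$ yields a compact smooth manifold (with boundary) onto which $|N(\mathcal{U})|$ deformation retracts; hence $\tilde\alpha = j^*\beta$ for some $\beta \in \HS^n(M, \R)$, where $j$ is the inclusion. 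Setting $f := j \circ \nu$ produces a continuous map $f : X \to M$ with $\alpha = f^*\beta$.

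The map $f$ induces a morphism of ringed spaces $(f, \varphi) : (X, C_X) \to (M, C_M)$, where $\varphi$ is pullback of continuous functions. Applying Proposition~\ref{prp:lambdaisfunctorial} gives $\Lambda_{C_X}(f^*\beta) = \HS(\Omega_{\varphi(M)})(\Lambda_{C_M}(\beta))$, and applying Proposition~\ref{prp:functorialityofpsiinspaces} to $f$ together with the algebra homomorphism $f^* : C(M) \to C(X)$ gives $\Psi_{C(X)} \circ \HS(\Omega_{f^*}) = f^* \circ \Psi_{C(M)}$. Chaining these identities and invoking Corollary~\ref{cor:compositionforsmooth} on $M$ collapses $\Psi_{C(X)}(\Lambda_{C_X}(\alpha))$ to $f^*\beta = \alpha$.

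The hard part will be the preliminary step: realizing an arbitrary sheaf cohomology class on a compact Hausdorff space as a pullback from a compact smooth manifold. This rests on two standard inputs external to the paper, namely the identification of $\HH^*(X, \RU_X[0])$ with Čech cohomology for paracompact Hausdorff $X$, and the existence of smooth regular neighborhoods of finite polyhedra in Euclidean space; both should be quoted rather than reproved. Once they are in hand, the rest of the argument is a formal diagram chase powered by the naturality of $\Lambda$ and $\Psi$.
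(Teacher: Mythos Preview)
Your proposal is correct and follows essentially the same strategy as the paper: realize an arbitrary class as a pullback along a map $X\to |N(\mathcal{U})|\hookrightarrow M$ from a nerve into a compact smooth manifold neighborhood, then invoke naturality of $\Lambda$ and $\Psi$ together with Corollary~\ref{cor:compositionforsmooth}. The paper quotes Godement~II.5.10 and Hirsch for the two external inputs you identified, so your citations are exactly on target.
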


\begin{proof}
Choose a cohomology class $\lambda \in \HH ^* (X,\RU _X [0])$.
We show that $(\Psi _{C(X)} \circ \Lambda _{C_X} )(\lambda ) = \lambda$.
First, by \cite[II.$5.10$]{godement1958topologie} there is a polyhedron $N$ (the geometric realization of some nerve) and a continuous map $f:X\to N$, such that $\lambda =f^* (\delta )$ for some $\delta\in \HH ^* (N,\RU _N [0] )$.
Second, there exists a compact smooth manifold (with boundary) $M$ such that $N\subset M$ and $N$ is a deformation retract of $M$ (see \cite[Theorem~$1$]{hirsch1962smooth}).
There exists $\gamma \in \HH ^* (M,\RU _M [0])$ such that $\gamma |_N = \delta$.
Consider the composition $g:X\xrightarrow{f}N\hookrightarrow M$.
We have $\lambda = g^* (\gamma )$.
Consider the diagram
\begin{equation*}
\begin{tikzcd}
\HH ^* (X,\RU _X [0]) \arrow[r,"\Lambda _{C_X}"] & \HS ^* (\Omega ^\bullet _{C(X)|\R} ) \arrow[r,"\Psi _{C(X)}"] & \HH ^* (X,\RU _{X} [0])\\
\HH ^* (M,\RU _X [0] ) \arrow[r,"\Lambda _{C_M}"]\arrow[u,"g^*"] & \HS ^* (\Omega ^\bullet _{C(M)|\R} ) \arrow[r,"\Psi _{C(M)}"]\arrow[u,"\HS (\Omega _{g^*})"] & \HH ^* (M, \RU _M [0])\arrow[u,"g^*"]
.\end{tikzcd}
\end{equation*}
This diagram is commutative by Propositions~\ref{prp:lambdaisfunctorial} and ~\ref{prp:functorialityofpsiinspaces}.
By Corollary~\ref{cor:compositionforsmooth} the equality $(\Psi _{C(X)} \circ \Lambda _{C_X} )(\lambda ) = \lambda$ follows.
\end{proof}

\begin{theorem}\label{thm:compositionforsoftsheaf}
For a compact Hausdorff space $X$ and a soft subsheaf of algebras $\FC \hookrightarrow C_X$ the following diagram is commutative
\begin{equation*}
\begin{tikzcd}
&\HS ^* (\Omega ^\bullet _{\FC (X)|\R} )\arrow[dr,"\Psi _{\FC (X)}"]&\\
\HH ^* (X,\RU _X [0] )\arrow[ur,"\Lambda _{\FC}"]\arrow[rr,"\Id"]&&\HH ^* (X,\RU _X [0] )
.\end{tikzcd}
\end{equation*}
\end{theorem}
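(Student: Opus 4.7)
The plan is to reduce the general soft-subsheaf case to the already-established case $\FC = C_X$, Corollary~\ref{cor:mainsplittingcor}, by exploiting the naturality of $\Lambda$ and $\Psi$ with respect to the inclusion $\FC\hookrightarrow C_X$.

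First I would produce, from the inclusion $\FC\hookrightarrow C_X$, a morphism of ringed spaces. Writing $j$ for this inclusion, the pair $(\id_X, j)\colon (X, C_X)\to(X,\FC)$ (with $j\colon \FC\to (\id_X)_* C_X = C_X$) fits the definition preceding Proposition~\ref{prp:lambdaisfunctorial}. Applying that proposition yields a commutative diagram whose content is the identity
\begin{equation*}
\HS(\Omega_{i})\circ \Lambda_{\FC} = \Lambda_{C_X},
\end{equation*}
where $i\colon \FC(X)\hookrightarrow C(X)$ is the inclusion on global sections.

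Second, I would apply Proposition~\ref{prp:functorialityofpsiinspaces} to the same inclusion: take $f=\id_X$, $A=C(X)$, $A'=\FC(X)$, and $\varphi=i$. The hypothesis that $f^*|_{A'}=\varphi$ is just the tautology that $i$ is the inclusion. The conclusion is
\begin{equation*}
\Psi_{\FC(X)} = \Psi_{C(X)}\circ \HS(\Omega_{i}).
\end{equation*}

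Finally I would compose these two equalities with Corollary~\ref{cor:mainsplittingcor}:
\begin{equation*}
\Psi_{\FC(X)}\circ \Lambda_{\FC}
= \Psi_{C(X)}\circ \HS(\Omega_{i})\circ \Lambda_{\FC}
= \Psi_{C(X)}\circ \Lambda_{C_X}
= \Id.
\end{equation*}
There is no real obstacle here: both natural squares are provided ready-made by Propositions~\ref{prp:lambdaisfunctorial} and~\ref{prp:functorialityofpsiinspaces}, and the nontrivial input --- that $\Psi_{C(X)}\circ\Lambda_{C_X}$ is the identity --- is Corollary~\ref{cor:mainsplittingcor}, which absorbed all the geometric work (the polyhedral approximation of $X$ by a nerve and the smoothing of that polyhedron into a manifold). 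The only thing to double-check is the direction of the morphism of ringed spaces used to invoke naturality of $\Lambda$: since the structure-sheaf map in a morphism of ringed spaces goes opposite to the space map, the inclusion $\FC\hookrightarrow C_X$ of sheaves really does correspond to a morphism $(X,C_X)\to (X,\FC)$, which is precisely what is needed to recover $\Lambda_{C_X}$ from $\Lambda_{\FC}$.
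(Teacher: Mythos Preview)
Your proof is correct and matches the paper's argument exactly: the paper's one-line proof invokes naturality of $\Lambda$ and $\Psi$ (Propositions~\ref{prp:lambdaisfunctorial} and~\ref{prp:functorialityofpsiinspaces}) together with Corollary~\ref{cor:mainsplittingcor}, which is precisely the factorization $\Psi_{\FC(X)}\circ\Lambda_{\FC}=\Psi_{C(X)}\circ\HS(\Omega_i)\circ\Lambda_{\FC}=\Psi_{C(X)}\circ\Lambda_{C_X}=\Id$ you wrote out. Your check on the direction of the ringed-space morphism is also correct.
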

\begin{proof}
It immediately follows from naturality of $\Lambda$ and $\Psi$ (Propositions~\ref{prp:lambdaisfunctorial} and \ref{prp:functorialityofpsiinspaces}) and Corollary~\ref{cor:mainsplittingcor}. 
\end{proof}

\section{Piecewise polynomial functions}\label{sec:piecewisepolynomialfunctions}
\subsection{Polyhedra and rectilinear maps}\label{ssec:polyhedraandmaps}
A \textit{polyhedron} $K$ is a finite set of affine simplices in $\R ^m$ such that
\begin{enumerate}
\item
for any $a\in K$ and any face $b\subset a$ we have $b\in K$;
\item
if $a,b\in K$ then $a\cap b$ is either a common face of $a$ and $b$ or empty.
\end{enumerate}
A subset $P$ of $K$ that is also a polyhedron is called a \textit{subpolyhedron} of $K$. 
Define the space $|K|\subset \R ^m$ as the union of all simplices of $K$.

We say that a function $f:|K|\to |K^\prime |$ is a \textit{rectilinear map} if for any $a\in K$ there exists $b\in K^\prime$ such that $f(a)\subset b$ and $f$ maps $a$ to $b$ affinely.
We call two rectilinear maps $f,g:|K|\to |K^\prime |$ \textit{adjacent} if for every simplex $a \in K$ the set $f(a )\cup g(a )$ is contained in a simplex of $K^\prime$.

We say that a polyhedron $P$ is a \textit{minor} of a polyhedron $K$ if $|P|\subset |K|$ and $|P|\hookrightarrow |K|$ is a rectilinear map (in other words, for any $a\in P$ there is $b\in K$ such that $a\subset b$).
We call a minor $P$ of $K$ a \textit{subdivision} if $|P|=|K|$.

We call a polyhedron $S$ a \textit{star} with the center $x\in\R ^m$ if $\lbrace x\rbrace$ is a vertex of $S$ and each maximal by inclusion simplex of $S$ has $\lbrace x\rbrace$ as a vertex.
Take a polyhedron $K$ with $x\in |K|$, then a minor $S$ of $K$ is called a \textit{star neighborhood} of $x$ if $S$ is a star with center $x$ and $x\in \Int _{|K|} |S|$.

For a polyhedron $K$ consider the algebra $\Pol (K)$ of functions $\varphi :|K|\to\R$ such that for each simplex $a\in K$  $\varphi |_a$ is a polynomial.
This algebra was considered, for example, in \cite{billera1989algebra}.
The algebra $\Pol (K)$ is contravariant with respect to rectilinear maps, in particular, if $i:|P|\hookrightarrow |K|$ is an inclusion of a minor we have the restriction homomorphism $\Pol (i):\Pol (K)\to \Pol (P)$.

\begin{lemma}\label{lem:subdivisionlemmaforpolyhedra}
Suppose $K,P_1,\dots,P_l$ are polyhedra such that $P_i$ are minors of $K$.
Then there exist subdivisions $K^\prime$ of $K$ and $P_i ^\prime$ of $P_i$ such that $P_i ^\prime$ is a subpolyhedron of $K^\prime$ for each $i$.
\end{lemma}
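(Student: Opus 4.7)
The approach is by induction on $l$. The case $l=0$ is trivial: take $K'=K$. For the inductive step I would first apply the inductive hypothesis to $K,P_1,\dots,P_{l-1}$, obtaining subdivisions $\tilde K$ of $K$ and $\tilde P_i$ of $P_i$ ($i<l$) with each $\tilde P_i$ a subpolyhedron of $\tilde K$. It then remains to handle $P_l$: I need to produce a subdivision $K'$ of $\tilde K$ together with a subdivision $P_l'$ of $P_l$ that is a subpolyhedron of $K'$. Any subdivision $K'$ of $\tilde K$ automatically refines each $\tilde P_i$ by selecting the simplices of $K'$ contained in $|\tilde P_i|$, so the inductive step reduces to the case $l=1$. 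A small subtlety is that $P_l$ is a minor of $K$ but not necessarily of $\tilde K$; the $l=1$ argument below works assuming only $|P_l|\subset|\tilde K|$, so this is harmless.

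For the case $l=1$ — given a polyhedron $A$ and a polyhedron $B$ with $|B|\subset|A|$, produce subdivisions with $B'$ a subpolyhedron of $A'$ — my plan is to use the arrangement of affine hulls. Let $\mathcal{H}$ be the finite collection of affine subspaces of $\R^m$ consisting of the affine hulls of all simplices of $A$ and of $B$, together with all intersections thereof. The cells of this arrangement, intersected with $|A|$, decompose $|A|$ into finitely many convex polytopes with two key properties: (i) each cell is contained in a single simplex of $A$, and (ii) each cell either lies entirely in $|B|$ or has interior disjoint from $|B|$. Triangulating these convex cells simplicially in a compatible manner, by induction on dimension and extending boundary triangulations inward by coning from an interior point of each cell, produces a simplicial polyhedron $A'$ subdividing $A$. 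Selecting the simplices of $A'$ contained in $|B|$ then gives the desired subdivision $B'$ of $B$, realized as a subpolyhedron of $A'$.

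The main technical burden lies in the triangulation step: one must check that the convex cells can be simplicially triangulated without violating the face-matching condition (axiom (2) of Subsection~\ref{ssec:polyhedraandmaps}) across shared boundaries. The standard way to enforce compatibility is to triangulate dimension by dimension, so that triangulations of lower-dimensional cells have already been fixed when one triangulates a higher-dimensional cell. Once $A'$ has been built, both defining axioms of a polyhedron for $A'$, as well as the inclusion relations needed to make $B'$ a subpolyhedron of $A'$ and $A'$ a subdivision of $A$, follow directly from the properties (i) and (ii) of the convex-cell decomposition. The construction I have outlined is a standard tool in piecewise-linear topology, so I would not expect any additional conceptual obstacle beyond this bookkeeping.
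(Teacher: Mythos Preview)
Your argument is correct. The paper does not give its own proof of this lemma; it simply cites \cite[Addendum~$2.12$]{rourke2012introduction}, where the corresponding statement is proved in the Rourke--Sanderson framework of PL topology. Your proposal is therefore not so much a different approach as a self-contained substitute for that citation.

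A brief comparison: Rourke--Sanderson build their subdivision machinery inductively via stellar subdivisions and extension lemmas, whereas you go directly through the hyperplane arrangement determined by the affine hulls of all simplices involved, then triangulate the resulting convex cells dimension by dimension. Both routes are standard; yours has the advantage of being more geometric and self-contained, at the cost of the bookkeeping you flag (compatible triangulation of the cell complex). Your reduction to $l=1$ is clean, and your observation that the $l=1$ step only needs $|P_l|\subset|\tilde K|$ (not that $P_l$ be a minor of $\tilde K$) is exactly what makes the induction go through. The claim that any subdivision $K'$ of $\tilde K$ restricts to a subdivision of each subpolyhedron $\tilde P_i$ is correct and standard: since each simplex of $\tilde P_i$ is already a simplex of $\tilde K$, it is a union of simplices of $K'$.
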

\begin{proof}
See \cite[Addendum~$2.12$]{rourke2012introduction}.
\end{proof}

\begin{lemma}\label{lem:polyhedralneigbourhoodofaclosedset}
Take $K$ a polyhedron, $U\subset |K|$ a set open in $|K|$ and $D\subset U$ a set closed in $|K|$.
Then there exists a minor $P$ of $K$ such that $|P|\subset U$ and $D\subset \Int _{|K|} |P|$.
\end{lemma}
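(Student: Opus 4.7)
The plan is to produce $P$ as the closed star of $D$ in a sufficiently fine subdivision of $K$.

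First I would note that $|K|\subset\R^m$ is compact, hence so is the closed subset $D$. For each $x\in D$ I choose an open ball $B(x,\varepsilon_x)\subset\R^m$ with $B(x,2\varepsilon_x)\cap |K|\subset U$, which is possible because $D\subset U$ and $U$ is open in $|K|$. Extracting a finite subcover $B(x_1,\varepsilon_1),\dots,B(x_r,\varepsilon_r)$ of $D$ and setting $\varepsilon:=\min_i \varepsilon_i$, I obtain a numerical threshold controlled by $U$ alone.

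Next I would invoke the standard fact that iterated barycentric subdivision produces subdivisions $K'$ of $K$ with mesh (maximum diameter of a simplex) arbitrarily small; choose $K'$ with mesh $<\varepsilon$. Define $P$ to be the set of all simplices of $K'$ that meet $D$, together with all their faces. Since $K'$ is a subdivision of $K$, every simplex of $P$ is contained in some simplex of $K$, so $P$ is a minor of $K$. If $\sigma\in P$ is a maximal simplex then $\sigma$ meets $D$ at some point $y$; picking $i$ with $y\in B(x_i,\varepsilon_i)$, the bound $\operatorname{diam}\sigma<\varepsilon\le\varepsilon_i$ gives $\sigma\subset B(y,\varepsilon_i)\subset B(x_i,2\varepsilon_i)$, hence $\sigma\subset U$. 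Consequently $|P|\subset U$.

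Finally, for the inclusion $D\subset\Int_{|K|}|P|$, I would use the standard simplicial-complex argument: for any $x\in D$ let $\tau\in K'$ be the unique open simplex containing $x$. The open star of $\tau$ in $K'$ is the union of the relative interiors of simplices of $K'$ having $\tau$ as a face, and this open star is an open neighborhood of $x$ in $|K'|=|K|$. Every such simplex $\sigma$ contains $x\in D$ in its closure, so meets $D$, and therefore lies in $P$; this shows the open star of $\tau$ is contained in $|P|$, giving $x\in\Int_{|K|}|P|$.

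The only non-trivial input is the existence of subdivisions of arbitrarily small mesh, which is classical. The main bookkeeping obstacle is keeping the two notions straight, the combinatorial one (``$P$ is a minor of $K$'') and the topological one (``$|P|$ is a neighborhood of $D$''); both reduce cleanly to standard facts once the mesh has been chosen via the Lebesgue-type argument above.
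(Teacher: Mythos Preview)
Your argument is correct. The paper states this lemma without proof (it is a standard PL fact), so there is nothing to compare against; your closed-star-in-a-fine-subdivision approach is exactly the expected proof and all the steps check out against the paper's definitions of \emph{polyhedron}, \emph{minor}, and \emph{subdivision}.
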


\begin{definition}\label{def:piecewisepolynomialfunctions}

Let us define the sheaf of piecewise polynomial functions on a polyhedron.
\begin{enumerate}[leftmargin=*]
\item
For a polyhedron $K$ and a subset $U\subset |K|$ open in $|K|$ we call a function $s :U\to\R$ \textit{piecewise polynomial} if for each point $x\in U$ there exists a minor $K_x$ of $K$, such that $|K_x|\subset U$ with $x\in \Int _{|K|} |K_x|$ and $s|_{|K_x|}\in \Pol (K_x)$.
\item
The set $\PW (U)$ of piecewise polynomial functions on $U$ forms an algebra (use Lemma~\ref{lem:subdivisionlemmaforpolyhedra}).
\item
Take two sets $V\subset U$ open in $|K|$.
It is not hard to see that the restriction of a piecewise polynomial function $s\in \PW (U)$ to $V$ is piecewise polynomial.
Hence, the correspondence $U\mapsto \PW (U)$ defines a sheaf on $|K|$ which we denote by $\PW _K$.
\end{enumerate}
\end{definition}

\begin{proposition}\label{prp:ppolissoft}
For a polyhedron $K$ the sheaf $\PW _K$ is soft.
\end{proposition}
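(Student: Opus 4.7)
The plan is to extend any local representative by multiplying it by a piecewise polynomial cutoff function. Take a closed set $Z\subset |K|$ and an element $s\in \PW_K(Z)$, represented by some $\tilde{s}\in \PW_K(U)$ on an open neighborhood $U$ of $Z$.

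First I would apply Lemma~\ref{lem:polyhedralneigbourhoodofaclosedset} twice to sandwich $Z$ inside two minors $P'$ and $P$ of $K$ with $Z\subset \Int_{|K|}|P'|$, $|P'|\subset \Int_{|K|}|P|$, and $|P|\subset U$. Applying Lemma~\ref{lem:subdivisionlemmaforpolyhedra} to the triple $K,P',P$ yields a subdivision $K^*$ of $K$ in which refinements $P'^*,P^*$ of $P',P$ become subpolyhedra of $K^*$. By iterating barycentric subdivisions I refine $K^*$ further so that every simplex of $K^*$ has Euclidean diameter strictly smaller than the Euclidean distance between $|P'|$ and $|K|\setminus \Int_{|K|}|P|$; the latter is positive by compactness and disjointness.

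Next I would construct the cutoff $\varphi:|K|\to\R$ as the unique function linear on each simplex of $K^*$ with $\varphi(v)=1$ at each vertex $v$ of $K^*$ lying in $|P'|$ and $\varphi(v)=0$ at every other vertex. Then $\varphi\in \Pol(K^*)$, and in fact $\varphi\in \PW_K(|K|)$: for each $x\in |K|$, Lemma~\ref{lem:polyhedralneigbourhoodofaclosedset} applied to the polyhedron $K^*$ produces a minor $K_x$ of $K^*$ that is a polyhedral neighborhood of $x$, and since each simplex of $K_x$ is contained in a simplex of $K^*$ hence of $K$, $K_x$ is also a minor of $K$ with $\varphi|_{|K_x|}\in \Pol(K_x)$. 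Because every simplex of $P'^*$ has all its vertices in $|P'|$, linearity gives $\varphi\equiv 1$ on $|P'|$. Conversely, if $\sigma\in K^*$ is not contained in $|P|$, then $\sigma$ meets $|K|\setminus \Int_{|K|}|P|$, and the diameter bound forces $\sigma\cap |P'|=\emptyset$; so no vertex of $\sigma$ lies in $|P'|$ and $\varphi\equiv 0$ on $\sigma$. In particular, $\varphi$ vanishes on $|K|\setminus |P|$.

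Finally I would define $t:|K|\to\R$ by $t=\varphi\tilde{s}$ on $U$ and $t=0$ on $|K|\setminus |P|$; these agree on the overlap $U\setminus |P|$ since $\varphi$ vanishes there, so $t$ is well-defined. On $U$, the function $t$ is a product of elements of $\PW_K(U)$ (using Lemma~\ref{lem:subdivisionlemmaforpolyhedra} to align minors for $\varphi$ and $\tilde{s}$), hence $t|_U\in \PW_K(U)$; on the open set $|K|\setminus |P|$ it is identically zero, which is trivially piecewise polynomial by Lemma~\ref{lem:polyhedralneigbourhoodofaclosedset} applied at each point. Gluing gives $t\in \PW_K(|K|)$, and because $\varphi\equiv 1$ on the neighborhood $\Int_{|K|}|P'|\supset Z$ we have $t=\tilde{s}$ there, so $t$ restricts to $s\in \PW_K(Z)$, proving surjectivity of $\res_{|K|,Z}$. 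The delicate step is the construction of $\varphi$: both of its defining properties rely on the positive separation between $|P'|$ and $|K|\setminus \Int_{|K|}|P|$ and on making all simplices of $K^*$ smaller than that separation via barycentric subdivision.
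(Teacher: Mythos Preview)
Your proof is correct and follows essentially the same strategy as the paper's: sandwich the closed set between two nested polyhedral neighborhoods, build a piecewise polynomial cutoff that is $1$ on the inner one and $0$ outside the outer one, and multiply. The only difference is in how the cutoff is produced. The paper, after making both $P'$ and $P'_2$ subpolyhedra of a common subdivision $K'$ via Lemma~\ref{lem:subdivisionlemmaforpolyhedra}, simply takes some $t\in\Pol(K')$ with $t\equiv 1$ on $|P'|$ and $t\equiv 0$ on $|K'|\setminus|P'_2|$; the existence of such a simplexwise-affine $t$ is immediate from the subpolyhedron structure, since any simplex of $K'$ meeting $|K'|\setminus|P'_2|$ cannot lie in $P'_2$ and hence has no vertex in $|P'|\subset\Int_{|K|}|P'_2|$. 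You instead enforce the separation metrically by iterated barycentric subdivision, which is a bit more hands-on but equally valid. One small notational wrinkle: after you refine $K^*$ barycentrically you keep writing $K^*$ and $P'^*$ for the refined objects; it would be cleaner to name the final subdivision and note explicitly that barycentric subdivision carries the subpolyhedron $P'^*$ to a subpolyhedron, so that the simplices covering $|P'|$ still have all vertices in $|P'|$.
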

\begin{proof}
Take a subset $D\subset |K|$ closed in $|K|$ and an element of $\PW _K (D)$, which is represented by a set $U\subset |K|$ open in $|K|$ with $D\subset U$ and a section $s\in \PW _K (U)$.
By Lemma~\ref{lem:polyhedralneigbourhoodofaclosedset} there exists a minor $P$ of $K$ such that $D\subset \Int _{|K|} |P|$ and $|P|\subset U$.
Apply Lemma~\ref{lem:polyhedralneigbourhoodofaclosedset} again to obtain a minor $P_2$ of $K$ such that $|P|\subset \Int _{|K|} |P_2|$ and $|P_2|\subset U$.
By Lemma~\ref{lem:subdivisionlemmaforpolyhedra} there exist subdivisions $K^\prime$ of $K$, $P^\prime _2$ of $P_2$ and $P^\prime$ of $P$ such that $P^\prime$ and $P^\prime _2$ are subpolyhedra of $K^\prime$.
Take an element $t\in \Pol (K^\prime)$ such that $t|_{|P^\prime|} \equiv 1$ and $t|_{|K^\prime|-|P^\prime _2|}\equiv 0$.
The function $ts$ is a global section of $\PW _K$ and $ts$ and $s$ coincide in $\PW _K(D)$.
\end{proof}

\subsection{The maps $\Lambda _{\PW _K}$ and $\Psi _{\PW _K (|K|)|\R}$}
Here we prove that for the sheaf of piecewise polynomial functions on $K$ the maps $\Lambda _{\PW _K}$ and $\Psi _{\PW _K (|K|)|\R}$ are in fact isomorphisms.

The following notion can be found in \cite{gersthn1971homotopy}.
Two morphisms of $\R$-algebras $\varphi _0,\varphi _1:A\to B$ are called \textit{simply homotopic} if there exists a homomorphism $H:A\to B\otimes \R [t]$ such that the following diagram is commutative for $\lambda =0,1$:
\begin{equation}\label{diag:homotopicmorphismsofalgebras}
\begin{tikzcd}
A\arrow[rr,"\varphi _\lambda",bend left =25]\arrow[r,"H"']&B\otimes \R[t]\arrow[r,"t\mapsto \lambda",swap]&B
.\end{tikzcd}
\end{equation}

The following is a well known definition of homotopic morphisms of dg-algebras and can be found in \cite[II.$1$]{lehmann1977theorie}.
\begin{definition}\label{def:homotopicmorphismsofalgebras}
Two morphisms of dg-algebras $\varphi _0,\varphi _1:E\to E^\prime$ are called \textit{homotopic} if there exists a morphism $H:E\to  E^\prime\otimes \Omega ^\bullet _{\R [t]|\R}$ such that the following diagram is commutative for $\lambda =0,1$:
\begin{equation*}
\begin{tikzcd}
E\arrow[rr,"\varphi _\lambda",bend left =25]\arrow[r,"H"']&E^\prime\otimes \Omega ^\bullet _{\R [t]|\R}\arrow[r,"t\mapsto \lambda",swap]&E^\prime
.\end{tikzcd}
\end{equation*}
Here $t\mapsto \lambda$ is the dg-algebra morphism that is the identity on $E^\prime$ and sends $t$ to $\lambda$.
\end{definition}
\begin{lemma}\label{lem:homotopicmapsofdgalgebras}
Homotopic morphisms of dg-algebras induce equal maps on the cohomology groups.
\end{lemma}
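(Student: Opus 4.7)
The plan is to reduce the claim to the following special case: the two evaluation morphisms $\ev_0,\ev_1 \colon E^\prime\otimes\Omega^\bullet_{\R[t]|\R}\to E^\prime$, defined as the identity on $E^\prime$ together with $t\mapsto 0$, resp.\ $t\mapsto 1$, induce equal maps on cohomology. Since $\varphi_\lambda=\ev_\lambda\circ H$ by hypothesis, once this is established functoriality immediately gives $\HS(\varphi_0)=\HS(\ev_0)\circ\HS(H)=\HS(\ev_1)\circ\HS(H)=\HS(\varphi_1)$.

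To prove $\HS(\ev_0)=\HS(\ev_1)$ I would construct an explicit chain homotopy between $\ev_0$ and $\ev_1$ by integration over $[0,1]$. In characteristic zero the dg-algebra $\Omega^\bullet_{\R[t]|\R}$ is concentrated in degrees $0$ and $1$ with $\Omega^0_{\R[t]|\R}=\R[t]$, $\Omega^1_{\R[t]|\R}=\R[t]\,dt$, and differential $p(t)\mapsto p^\prime(t)\,dt$. Hence every element of $E^\prime\otimes\Omega^\bullet_{\R[t]|\R}$ of total degree $n$ can be written uniquely as $\alpha+\beta\,dt$ with $\alpha\in E^{\prime,n}\otimes\R[t]$ and $\beta\in E^{\prime,n-1}\otimes\R[t]$. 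Expanding $\beta=\sum_j y_j\, t^j$, I define a map of degree $-1$ by
$$s(\alpha+\beta\,dt):=(-1)^{n+1}\sum_j\frac{y_j}{j+1}.$$
The verification that $ds+sd=\ev_1-\ev_0$ is a bookkeeping exercise using the Leibniz rule on the tensor product of dg-algebras, together with $d(t^i)=i\,t^{i-1}\,dt$ and $d(t^j\,dt)=0$: the contributions of $dy_j$ coming from $ds$ and from $sd$ appear with opposite signs and cancel, while the remaining part of $sd$ reproduces $\sum_{i\geq 1}x_i=\ev_1(\alpha)-\ev_0(\alpha)$, where $\alpha=\sum_i x_i\,t^i$.

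Once $s$ is in place, passing to cohomology delivers $\HS(\ev_0)=\HS(\ev_1)$ and, as noted above, the desired equality $\HS(\varphi_0)=\HS(\varphi_1)$. I do not anticipate any substantial obstacle; the only mild care needed is to track the Koszul signs coming from the tensor product of graded-commutative dg-algebras so that the prefactor in the formula for $s$ has the correct overall sign.
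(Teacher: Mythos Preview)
Your argument is correct: the reduction to showing $\HS(\ev_0)=\HS(\ev_1)$ is sound, and the integration operator $s$ you write down really is a chain homotopy between the two evaluations (the sign bookkeeping checks out). The paper itself gives no proof of this lemma; it simply refers to \cite[Lemma~II.1]{lehmann1977theorie}, where exactly this kind of explicit fiberwise-integration homotopy is constructed. So your proposal supplies the standard argument that the cited reference contains, and is in fact more informative than the paper's one-line citation.
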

\begin{proof}
See \cite[Lemma~II$.1$]{lehmann1977theorie}.
\end{proof}

\begin{proposition}\label{prp:adjecentmapsinducethesamemorphismoncohomologygroups}
Suppose the morphisms $\varphi _0,\varphi _1:A\to B$ of algebras are simply homotopic.
Then the induced morphisms $\HS (\Omega _{\varphi _\lambda}) :\HS ^* (\Omega ^\bullet _{A|\R}) \to \HS ^* (\Omega ^\bullet _{B|\R})$ for $\lambda =0,1$ are equal.
\end{proposition}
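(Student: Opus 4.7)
The plan is to upgrade the simple algebra homotopy $H : A \to B \otimes \R[t]$ to a dg-algebra homotopy between $\Omega_{\varphi_0}$ and $\Omega_{\varphi_1}$ in the sense of Definition~\ref{def:homotopicmorphismsofalgebras}, and then invoke Lemma~\ref{lem:homotopicmapsofdgalgebras}.

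First, I would apply the functor $\Omega^\bullet_{-|\R}$ to $H$ to obtain a morphism of dg-algebras
\[
\Omega_H : \Omega^\bullet_{A|\R} \longrightarrow \Omega^\bullet_{B \otimes \R[t]|\R}.
\]
Next, consider the graded-commutative dg-algebra $E := \Omega^\bullet_{B|\R} \otimes \Omega^\bullet_{\R[t]|\R}$ (ordinary tensor product of dg-algebras). Its degree-zero part is $B \otimes \R[t]$, and the identity map $B \otimes \R[t] \to E^0$ is an algebra homomorphism, so by the universal property of $\Omega^\bullet_{B \otimes \R[t]|\R}$ there is a unique morphism of dg-algebras
\[
\Pi : \Omega^\bullet_{B \otimes \R[t]|\R} \longrightarrow \Omega^\bullet_{B|\R} \otimes \Omega^\bullet_{\R[t]|\R}.
\]
Set $\widetilde{H} := \Pi \circ \Omega_H$; this is the candidate dg-homotopy.

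It remains to check that for $\lambda = 0, 1$ the composition
\[
\Omega^\bullet_{A|\R} \xrightarrow{\widetilde{H}} \Omega^\bullet_{B|\R} \otimes \Omega^\bullet_{\R[t]|\R} \xrightarrow{\id \otimes \mathrm{ev}_\lambda} \Omega^\bullet_{B|\R}
\]
equals $\Omega_{\varphi_\lambda}$, where $\mathrm{ev}_\lambda$ is the dg-morphism $\Omega^\bullet_{\R[t]|\R} \to \R$ sending $f(t) \mapsto f(\lambda)$ and $dt \mapsto 0$. Since both sides are morphisms of dg-algebras out of $\Omega^\bullet_{A|\R}$, by the universal property it suffices to verify the equality in degree $0$, that is, on $A$. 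In degree $0$, however, both $\Omega_H$ and $\Pi$ act as the identifications already built into their construction, so the composition is simply $a \mapsto (\id \otimes \mathrm{ev}_\lambda)(H(a))$, which equals $\varphi_\lambda(a)$ by Diagram~\ref{diag:homotopicmorphismsofalgebras}.

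Thus $\Omega_{\varphi_0}$ and $\Omega_{\varphi_1}$ are homotopic in the sense of Definition~\ref{def:homotopicmorphismsofalgebras}, and Lemma~\ref{lem:homotopicmapsofdgalgebras} yields the claim. The only potentially delicate step is the existence and correctness of the universal-property map $\Pi$; the subtlety is that $\Omega^\bullet_{B|\R} \otimes \Omega^\bullet_{\R[t]|\R}$ must be used with its natural graded-commutative dg-algebra structure (with the Koszul sign rule for both the multiplication and the differential $d \otimes 1 + 1 \otimes d$), but once that is in place the construction is forced by the universal property of $\Omega^\bullet_{B \otimes \R[t]|\R}$ stated in Subsection~\ref{ssec:algebraicderhamforms}.
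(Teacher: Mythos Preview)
Your proof is correct and follows the same strategy as the paper's: construct a dg-homotopy $\Omega^\bullet_{A|\R}\to\Omega^\bullet_{B|\R}\otimes\Omega^\bullet_{\R[t]|\R}$ from the simple homotopy $H$ and invoke Lemma~\ref{lem:homotopicmapsofdgalgebras}. The only difference is in how the passage $\Omega^\bullet_{B\otimes\R[t]|\R}\to\Omega^\bullet_{B|\R}\otimes\Omega^\bullet_{\R[t]|\R}$ is obtained: the paper uses the inverse of the canonical map $u:\Omega^\bullet_{B|\R}\otimes\Omega^\bullet_{\R[t]|\R}\to\Omega^\bullet_{B\otimes\R[t]|\R}$, appealing to \cite[Corollary~4.2]{KunzKahlerDifferentials} for the fact that $u$ is an isomorphism, whereas you produce $\Pi$ directly from the universal property of $\Omega^\bullet_{B\otimes\R[t]|\R}$. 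Your route is marginally more elementary since it needs only the existence of $\Pi$, not that it be an isomorphism; the two maps coincide in any case, as $u\circ\Pi$ is a dg-endomorphism of $\Omega^\bullet_{B\otimes\R[t]|\R}$ that is the identity in degree~$0$.
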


\begin{proof}
We prove that the morphisms of dg-algebras $\Omega _{\varphi _\lambda}: \Omega ^\bullet _{A|\R}\to \Omega ^\bullet _{B|\R}$ are homotopic for $\lambda =0,1$.
By Lemma~\ref{lem:homotopicmapsofdgalgebras} it will imply that the maps on the cohomology are equal.
As $\varphi _\lambda$ are simply homotopic for $\lambda =0,1$, we have a homomorphism $H:A\to B\otimes \R [t]$ such that Diagram~\ref{diag:homotopicmorphismsofalgebras} commutes.
The obvious morphisms $\Omega ^\bullet _{B|\R}\to \Omega ^\bullet_{B\otimes \R[t]|\R}$ and $\Omega ^\bullet _{\R [t]|\R}\to \Omega ^\bullet_{B\otimes \R[t]|\R}$ form the canonical morphism
$u:\Omega ^\bullet_{B|\R}\otimes \Omega ^\bullet _{\R [t]|\R}\to\Omega ^\bullet_{B\otimes \R[t]|\R}$, which is an isomorphism by \cite[Corollary~$4.2$]{KunzKahlerDifferentials}.

Consider the following commutative diagram:
\begin{equation*}
\begin{tikzcd} 
&\Omega ^\bullet _{A|\R}\arrow[ld,"u^{-1}\circ \Omega _H"']\arrow[d,"\Omega _H"]\arrow[rd,"\Omega _{\varphi _\lambda}"]&\\
\Omega ^\bullet_{B|\R}\otimes \Omega ^\bullet _{\R [t]|\R}\arrow[r,"\cong"',"u"]\arrow[rr,"t\mapsto \lambda"',bend right = 15]&\Omega ^\bullet_{B\otimes \R[t]|\R}\arrow[r,"t\mapsto \lambda"]&\Omega ^\bullet _{B|\R}
.\end{tikzcd}
\end{equation*}
We obtain that $\Omega _{\varphi _\lambda}$ are homotopic for $\lambda =0,1$.
\end{proof}

\begin{lemma}\label{lem:conjmapsinducehomotopicmaps}
Suppose $f_0,f_1:|K|\to |K^\prime |$ are two adjacent rectilinear maps of polyhedra.
Then the induced homomorphisms $f_0 ^*,f_1 ^*:\Pol (K^\prime )\to \Pol (K)$ are simply homotopic.
\end{lemma}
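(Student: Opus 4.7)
The plan is to construct the homotopy as the straight-line homotopy between $f_0$ and $f_1$, interpreted polynomially in the parameter. More precisely, for $\varphi\in\Pol(K^\prime)$ I define
\[
H(\varphi)(x,t):=\varphi\bigl((1-t)f_0(x)+t f_1(x)\bigr),
\]
and the whole proof amounts to checking that this formula defines a homomorphism $H:\Pol(K^\prime)\to \Pol(K)\otimes\R[t]$ specializing to $f_\lambda^*$ at $t=\lambda$.

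First I would justify that the right-hand side makes sense and lands in $\Pol(K)\otimes\R[t]$. Fix a simplex $a\in K$; by adjacency there exists $b\in K^\prime$ containing $f_0(a)\cup f_1(a)$. Since $b$ is convex, the linear interpolation $(1-t)f_0(x)+tf_1(x)$ lies in $b$ for all $x\in a$ and $t\in[0,1]$, so $\varphi$ may be evaluated at this point using $\varphi|_b$. On $a$ both $f_0$ and $f_1$ are affine, hence $(x,t)\mapsto (1-t)f_0(x)+tf_1(x)$ is a polynomial map from $a\times\R$ to $\Aff b$, and $\varphi|_b$ extends to a polynomial on $\Aff b$. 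Composing, $H(\varphi)|_{a\times\R}$ is a polynomial in $(x,t)$ whose degree in $t$ is bounded by $\deg(\varphi|_b)$. Since $K$ is finite, the joint degree in $t$ is uniformly bounded, so $H(\varphi)$ is an element of $\Pol(K)\otimes \R[t]$ (viewed as polynomials in $t$ with $\Pol(K)$-coefficients).

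Next I would verify the algebraic properties. That $H$ preserves sums and products is immediate: for $\varphi,\psi\in\Pol(K^\prime)$, $(\varphi\psi)\bigl((1-t)f_0(x)+tf_1(x)\bigr)=\varphi(\cdot)\psi(\cdot)$ pointwise, and similarly for sums. Finally, substituting $t=\lambda\in\{0,1\}$ gives
\[
H(\varphi)|_{t=\lambda}(x)=\varphi(f_\lambda(x))=f_\lambda^*(\varphi)(x),
\]
so Diagram~\ref{diag:homotopicmorphismsofalgebras} commutes with $A=\Pol(K^\prime)$, $B=\Pol(K)$, $\varphi_\lambda=f_\lambda^*$.

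There is no real obstacle; the only subtle point is that different simplices $a\in K$ may force different enclosing simplices $b\in K^\prime$, so one has to check that $H(\varphi)$ is a globally well-defined function on $|K|\times\R$ whose restriction to each $a\in K$ is polynomial in $(x,t)$. This is exactly what the adjacency hypothesis and convexity of each $b$ deliver, and the finiteness of $K$ then packages the data into $\Pol(K)\otimes\R[t]$.
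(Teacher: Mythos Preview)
Your proof is correct and follows essentially the same approach as the paper: both construct the straight-line homotopy $H(\varphi)(x,t)=\varphi\bigl((1-t)f_0(x)+tf_1(x)\bigr)$ and verify it lands in $\Pol(K)\otimes\R[t]$. The paper packages this via an intermediate algebra $T$ of functions on $|K|\times[0,1]$ polynomial on each $a\times[0,1]$ and the isomorphism $\Pol(K)\otimes\R[t]\cong T$, while you argue directly; the content is the same. One small remark: your final paragraph asserts global well-definedness on $|K|\times\R$, but adjacency and convexity only immediately give agreement on overlaps $(a_1\cap a_2)\times[0,1]$; the extension to all of $\R$ follows because two polynomials in $t$ agreeing on $[0,1]$ are equal, which is worth saying explicitly.
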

\begin{proof}
Consider the algebra $T$ of functions $s$ on $|K|\times [0,1]$ such that for each $a\in K$ the restriction $s|_{a\times [0,1]}$ is a polynomial function.
The map $\Pol (K) \otimes \R [t] \to T$ sending $\beta\otimes p(t)$ to the function $(x,t)\mapsto \beta (x)p(t)$ is an isomorphism.
Construct the homomorphism $H:\Pol (K^\prime )\to T$ as $H(\alpha ) (x,t):=\alpha (f_0(x)(1-t)+f_1(x)t)$.
As $f_0$ and $f_1$ are adjacent, $H(\alpha )\in T$.
Also, $H(\alpha )|_{t=\lambda} = \alpha (f_\lambda(x))=f_\lambda ^* (\alpha )$ for $\lambda =0,1$.
The needed homotopy map is the lift of the homomorphism $H$ to $\Pol (K)\otimes \R [t]$.
\end{proof}

For a dg-algebra $E$ we consider the morphism of complexes, the coaugmentation, $\epsilon :\R [0]\to E$ defined by $\epsilon (1)=1$.

\begin{corollary}[Poincar\'e lemma]\label{cor:poincarelemma}
Suppose $S$ is a star with center $x$.
Then the coaugmentation $\epsilon :\R [0]\to \Omega ^\bullet _{\Pol (S)|\R}$ is a quasi-isomorphism.
\end{corollary}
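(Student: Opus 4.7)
The plan is to exploit the star hypothesis to build a rectilinear self-retraction of $|S|$ and then invoke Proposition~\ref{prp:adjecentmapsinducethesamemorphismoncohomologygroups}. I would introduce the constant map $c_x:|S|\to|S|$, $y\mapsto x$, alongside the identity $\Id_{|S|}$. Both are rectilinear (for $c_x$, each simplex of $S$ maps into the vertex simplex $\{x\}\in S$, which belongs to $S$ since $\{x\}$ is a vertex of $S$). I claim these two maps are adjacent: for $a\in S$, the union $\Id(a)\cup c_x(a)=a\cup\{x\}$ is contained in some simplex of $S$, because by the star hypothesis $a$ is a face of some maximal simplex $b\in S$ with $x$ as a vertex, whence $a\cup\{x\}\subset b$.

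Lemma~\ref{lem:conjmapsinducehomotopicmaps} then yields a simple homotopy between $\Id^*$ and $c_x^*$ on $\Pol(S)$, and Proposition~\ref{prp:adjecentmapsinducethesamemorphismoncohomologygroups} upgrades this to the equality $\HS(\Omega_{\Id^*})=\HS(\Omega_{c_x^*})$ on $\HS^*(\Omega^\bullet_{\Pol(S)|\R})$. The left-hand side is the identity. For the right, I would factor $c_x^*$ as $\Pol(S)\xrightarrow{\ev_x}\R\xrightarrow{\iota}\Pol(S)$, where $\ev_x$ is evaluation at $x$ and $\iota$ is the inclusion of constants. Applying $\Omega^\bullet_{-|\R}$ and using the canonical identification $\Omega^\bullet_{\R|\R}\cong\R[0]$, this produces a factorization
$$\Omega_{c_x^*}:\Omega^\bullet_{\Pol(S)|\R}\xrightarrow{\Omega_{\ev_x}}\R[0]\xrightarrow{\epsilon}\Omega^\bullet_{\Pol(S)|\R}.$$

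From this factorization, for $n\geq 1$ the identity on $\HS^n(\Omega^\bullet_{\Pol(S)|\R})$ factors through $\HS^n(\R[0])=0$, forcing that group to vanish. In degree $0$, the equation $\HS(\epsilon)\circ\HS(\Omega_{\ev_x})=\Id$ exhibits $\HS(\epsilon)$ as a split surjection; it is also injective since in degree $0$ the map $\epsilon$ is just the inclusion of $\R$ as constants into $\Pol(S)$. This establishes the quasi-isomorphism. The only point requiring genuine thought is the adjacency verification, which the star hypothesis makes immediate; everything else is bookkeeping with the homotopy machinery just set up.
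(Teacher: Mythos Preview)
Your proof is correct and follows essentially the same route as the paper's: the paper factors your constant map $c_x$ through the one-point polyhedron $Q=\{\{x\}\}$ via $\mathrm{col}:|S|\to|Q|$ and $i:|Q|\hookrightarrow|S|$, observes that $i\circ\mathrm{col}$ is adjacent to $\Id_{|S|}$, and then uses the identification $\Omega^\bullet_{\Pol(Q)|\R}\cong\R[0]$ in a commutative diagram to conclude. Your evaluation/inclusion factorization and degree-by-degree endgame are just a repackaging of that same diagram chase.
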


\begin{proof}
We denote by $Q$ the one-point polyhedron $\lbrace\lbrace x\rbrace\rbrace$.
Consider the rectilinear maps $\mathrm{col}:|S|\to |Q|$ and $i:|Q|\hookrightarrow |S|$.
By the definition of a star the composition $i\circ \mathrm{col}$ is adjacent to the identity $\Id :|S|\to |S|$.
Consider the following diagram:
\begin{equation*}
\begin{tikzcd}[column sep = 0.8in]
&\HS ^*(\R [0])\arrow[ld,"\HS (\epsilon )"']\arrow[d,"\HS (\epsilon )"',"\cong"]\arrow[rd,"\HS (\epsilon )"]&\\
\HS ^*(\Omega ^\bullet _{\Pol (S)|\R})\arrow[r,"\HS (\Omega _{\Pol (i)})"]\arrow[rr,"\Id",bend right = 15]&\HS ^*(\Omega ^\bullet _{\Pol (Q)|\R})\arrow[r,"\HS (\Omega _{\Pol (\mathrm{col})})"]&\HS ^*(\Omega ^\bullet _{\Pol (S)|\R})
\end{tikzcd}
\end{equation*}

By Proposition~\ref{prp:adjecentmapsinducethesamemorphismoncohomologygroups} and Lemma~\ref{lem:conjmapsinducehomotopicmaps} the bottom triangle is commutative.
The rest of the diagram commutes for obvious reasons.
One can easily see that the morphism of complexes $\epsilon :\R [0]\to \Omega ^\bullet _{\Pol (Q)|\R}$ is an isomorphism.
Hence, $\HS (\epsilon ):\HS ^* (\R [0])\to \HS ^*(\Omega ^\bullet _{\Pol (K)|\R})$ is an isomorphism.
\end{proof}

\begin{lemma}\label{lem:subdivisionforstars}
Take $K$ a polyhedron and $x\in |K|$.
Suppose $S_0,S_1$ are star neighborhoods of $x$ in $K$, then there exists a star neighborhood $S^\prime$ of $x$ in $K$ which is a minor of $S_i$ for $i=0,1$.
\end{lemma}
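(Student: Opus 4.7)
The plan is to construct $S^\prime$ by first passing to an ambient subdivision of $K$ in which both $S_0$ and $S_1$ live as subpolyhedra, then cutting out a sufficiently small polyhedral neighborhood of $x$ inside $\Int_{|K|}|S_0|\cap\Int_{|K|}|S_1|$, and finally extracting the star of $x$ from it.

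First I would apply Lemma~\ref{lem:subdivisionlemmaforpolyhedra} to $K,S_0,S_1$ to obtain a subdivision $K_1$ of $K$ together with subdivisions $\tilde S_0,\tilde S_1$ of $S_0,S_1$ that are subpolyhedra of $K_1$; since $\{x\}$ is a vertex of $S_0$ and $S_1$, it survives as a vertex of each $\tilde S_i$ and hence of $K_1$. Next, set $U:=\Int_{|K|}|S_0|\cap\Int_{|K|}|S_1|$, an open neighborhood of $x$ in $|K|$, and invoke Lemma~\ref{lem:polyhedralneigbourhoodofaclosedset} with ambient polyhedron $K_1$, open set $U$, and closed set $\{x\}$ to produce a minor $P$ of $K_1$ satisfying $|P|\subset U$ and $x\in\Int_{|K|}|P|$.

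The key technical step, and the main obstacle, is to promote $P$ to something whose simplices actually lie in simplices of $S_0$ and $S_1$, and to make $x$ a vertex of it. For this I would apply Lemma~\ref{lem:subdivisionlemmaforpolyhedra} a second time, now to the family $P,\tilde S_0,\tilde S_1,\{\{x\}\}$ of minors of $K_1$, obtaining a subdivision $K_2$ of $K_1$ in which subdivisions $\hat P,\hat S_0,\hat S_1$ of $P,\tilde S_0,\tilde S_1$ and the one-vertex polyhedron $\{\{x\}\}$ itself are simultaneously subpolyhedra of $K_2$. The crucial elementary observation is that if $Q$ is a subpolyhedron of a polyhedron $L$ and $a\in L$ satisfies $a\subset |Q|$, then $a\in Q$: a point $p$ in the relative interior of $a$ lies in the relative interior of a unique simplex of $Q$, and uniqueness of the simplex of $L$ containing $p$ in its relative interior forces this simplex to equal $a$. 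Applied to the simplices of $\hat P$ lying in $|\hat S_i|=|S_i|$ and to $\{x\}$ lying in $|\hat P|=|P|$, this forces $\hat P\subset\hat S_i$ as subpolyhedra of $K_2$ for $i=0,1$, and also $\{x\}\in\hat P$.

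Finally, I would define $S^\prime$ as the subpolyhedron of $\hat P$ consisting of all simplices of $\hat P$ that have $x$ as a vertex, together with all their faces. By construction $S^\prime$ is a star with center $x$, and $|S^\prime|$ is the closed star of $x$ in $\hat P$, hence a neighborhood of $x$ in $|\hat P|=|P|$; combined with $x\in\Int_{|K|}|P|$ this yields $x\in\Int_{|K|}|S^\prime|$. The chain $S^\prime\subset\hat P\subset\hat S_i$ of subpolyhedron inclusions, together with the fact that $\hat S_i$ is a subdivision of $\tilde S_i$ and $\tilde S_i$ is a subdivision of $S_i$, realizes $S^\prime$ as a minor of each $S_i$ (and, through $K_1$, of $K$ itself), completing the argument.
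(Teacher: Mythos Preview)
Your argument is correct. The paper's own proof is a single line, ``Follows from Lemma~\ref{lem:subdivisionlemmaforpolyhedra}'', so you have in fact supplied more detail than the author does. The underlying idea is the same: pass to a common subdivision and take a star of $x$ there.

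That said, your route is more circuitous than necessary. A single application of Lemma~\ref{lem:subdivisionlemmaforpolyhedra} already suffices, with no need for Lemma~\ref{lem:polyhedralneigbourhoodofaclosedset} or a second subdivision. After your first step you have a subdivision $K_1$ of $K$ in which $\tilde S_0,\tilde S_1$ are subpolyhedra and $x$ is a vertex. Let $S^\prime$ be the closed star of $x$ in $K_1$. Then $S^\prime$ is a star with center $x$, and $|S^\prime|$ contains the open star of $x$ in $K_1$, so $x\in\Int_{|K|}|S^\prime|$. For the minor condition, observe that any simplex $a\in K_1$ having $x$ as a vertex has points of $\relint a$ arbitrarily close to $x$; since $x\in\Int_{|K|}|S_i|=\Int_{|K|}|\tilde S_i|$, some such point lies in $|\tilde S_i|$, and your own ``key observation'' then forces $a\in\tilde S_i$. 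Thus $S^\prime\subset\tilde S_i$, and since $\tilde S_i$ subdivides $S_i$, $S^\prime$ is a minor of $S_i$. This is presumably the argument the paper leaves implicit. Your extra steps (extracting $P$ via Lemma~\ref{lem:polyhedralneigbourhoodofaclosedset}, the second subdivision to make $x$ a vertex of $\hat P$) are harmless but avoidable.
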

\begin{proof}
Follows from Lemma~\ref{lem:subdivisionlemmaforpolyhedra}.
\end{proof}

Consider a polyhedron $K$ and a point $x\in |K|$.
Let $\Sigma (x)$ be the set of all star neighborhoods $S$ of $x$ in $K$.
Define a partial order on $\Sigma (x)$ as $S\leq S^\prime$ if $S^\prime$ is a minor of $S$.
By Lemma~\ref{lem:subdivisionforstars} the set $\Sigma (x)$ is directed.
Each star neighborhood $S$ of $x$ in $K$ gives rise to a homomorphism $\Pol (S)\to (\PW _K )_x$.
By taking the colimit over the directed set $\Sigma (x)$ we obtain the homomorphism
$$\chi :\underset{S\in\Sigma (x)}{\colim} \Pol (S) \to (\PW _K)_x.$$

\begin{lemma}\label{lem:stalkofpwarecolimitsofstarneighborhoods}
The map $\chi$ is an isomorphism.
\end{lemma}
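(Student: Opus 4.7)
The plan is to establish bijectivity of $\chi$ directly on representatives. Two observations drive the argument: (a) a polynomial on an affine simplex $a$ that vanishes on a subset of $a$ with non-empty interior in $\Aff a$ vanishes identically on $a$ (analytic rigidity of polynomials); and (b) given any polyhedron with a distinguished point in its underlying space, one can find a subdivision having that point as a vertex (standard PL topology, e.g.\ via stellar subdivision along any simplex whose relative interior contains the point).

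For surjectivity, represent a germ $\sigma \in (\PW_K)_x$ by a section $s \in \PW(U)$ on an open neighborhood $U$ of $x$ in $|K|$. By Definition~\ref{def:piecewisepolynomialfunctions}(1) there is a minor $K_x$ of $K$ with $|K_x|\subset U$, $x\in\Int_{|K|}|K_x|$, and $s|_{|K_x|}\in\Pol(K_x)$. Using (b), choose a subdivision $P$ of $K_x$ with $x$ as a vertex, and let $S$ be the collection of simplices of $P$ having $x$ as a vertex together with all their faces. Then $S$ is a star with center $x$ by construction; each simplex of $S$ lies in a simplex of $K_x$ and hence in a simplex of $K$, so $S$ is a minor of $K$; and the open star of $x$ in $P$ is open in $|P|=|K_x|$ and contains $x$, so since $x\in\Int_{|K|}|K_x|$ it is also open in $|K|$, giving $x\in\Int_{|K|}|S|$. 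Thus $S\in\Sigma(x)$. Moreover $s|_{|S|}\in\Pol(S)$, since every simplex of $S$ is contained in a simplex of $K_x$ on which $s$ is polynomial, and $s|_{|S|}$ represents $\sigma$ because $s|_{|S|}$ and $s$ agree on the neighborhood $\Int_{|K|}|S|$ of $x$.

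For injectivity, suppose $s\in\Pol(S)$ satisfies $\chi([s])=0$. Then there is an open neighborhood $V$ of $x$ in $|K|$, contained in $\Int_{|K|}|S|$, on which $s\equiv 0$. Let $a$ be any maximal simplex of $S$; by the star condition, $x$ is a vertex of $a$, and $V\cap a$ is a non-empty open subset of $a$ containing $x$. This set contains the intersection of $a$ with a small open ball of $\Aff a$ about $x$, which in turn has non-empty interior in $\Aff a$ (a small ball about any nearby relative-interior point of $a$ lies inside it). By (a) we conclude $s|_a\equiv 0$ on $a$. Since every simplex of $S$ is a face of a maximal simplex, $s\equiv 0$ on $|S|$, so $s=0$ in $\Pol(S)$ and $[s]=0$ in the colimit.

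The main obstacle is the surjectivity step: descending from an arbitrary minor $K_x$ of $K$ with $s|_{|K_x|}\in\Pol(K_x)$ to a star neighborhood $S\in\Sigma(x)$ with $s|_{|S|}\in\Pol(S)$. This rests on fact (b), a standard PL subdivision construction introducing $x$ as a vertex, after which extracting the closed star and checking the three conditions for membership in $\Sigma(x)$ is routine. All remaining steps are formal consequences of the definitions together with the analytic rigidity of polynomials recorded in (a).
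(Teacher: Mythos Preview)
Your proof is correct. Surjectivity follows the paper's route: from a minor $K_x$ on which the section is polynomial, pass to a star neighborhood of $x$; you spell out the subdivision-and-closed-star construction that the paper compresses into the single phrase ``take a star neighborhood $S$ of $x$ in $P$.''

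The injectivity argument is where you genuinely diverge. The paper works inside the colimit: given $t\in\Pol(S)$ vanishing near $x$, it produces a smaller star neighborhood $S'\leq S$ with $|S'|$ contained in the vanishing locus, so that $t|_{|S'|}=0$ in $\Pol(S')$ and hence $[t]=0$ in the colimit. You instead argue that $t$ is already zero in $\Pol(S)$ itself, using that a polynomial on a simplex vanishing on a set with non-empty relative interior must vanish identically. Your approach exploits the specific rigidity of polynomials and yields a slightly sharper conclusion; the paper's approach is more in the spirit of stalk computations and would work verbatim for any presheaf of functions assembled from the $\Pol(-)$, without needing the identity theorem. Both are short and clean.
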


\begin{proof}
For injectivity, take a star neighborhood $S$ of $x$ in $K$ and $t\in\Pol (S)$ such that $\chi (t)=0$.
There exists a set $U$ open in $|K|$ with $x\in U$ such that $t|_U=0$ as a function.
Take a star neighborhood $S^\prime$ of $x$ such that $|S^\prime|\subset U$ and $S^\prime$ is a minor of $S$.
We observe that $t|_{|S^\prime|}=(t|_U)|_{|S^\prime|}=0$ and, hence, $t$ is zero in $\colim _{S\in\Sigma (x)} \PW _K(S)$.

For surjectivity, take a germ $g\in (\PW _K)_x$, a set $U\subset |K|$ open in $|K|$ and a section $\tilde{g}\in \PW _K (U)$ such that $\tilde{g}_x = g$.
As $\tilde{g}\in \PW _K (U)$ there is a minor $P$ of $K$ such that $|P|\subset U$, $x\in \Int _{|K|} |P|$ and $\tilde{g}|_{|P|}\in \Pol (P)$.
Take a star neighborhood $S$ of $x$ in $P$. 
Then $\chi$ maps $\tilde{g}|_{|S|}$ to $g$.
\end{proof}

\begin{proposition}\label{prp:coaugmentationmapforpolyhedraisaqi}
For a polyhedron $K$ the coaugmentation
$$\epsilon :\RU _{|K|}[0] \to \OmegaU ^\bullet _{\PW _K|\R}$$ is a quasi-isomorphism.
\end{proposition}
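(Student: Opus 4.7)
The plan is to verify that $\epsilon$ is a quasi-isomorphism by checking it on stalks at each point $x\in |K|$. The stalk of $\RU_{|K|}[0]$ at $x$ is simply $\R$ concentrated in degree $0$, so it suffices to show that $\HS^n((\OmegaU^\bullet_{\PW_K|\R})_x) \cong \R$ for $n=0$ (with the iso induced by $\epsilon$) and vanishes for $n\geq 1$.

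First I would identify the stalk explicitly. Since sheafification preserves stalks, we have
$$(\OmegaU^n_{\PW_K|\R})_x \cong \underset{U\ni x}{\colim}\,\Omega^n_{\PW_K(U)|\R},$$
where the colimit runs over open neighborhoods of $x$ in $|K|$. Applying Lemma~\ref{lem:kahlerformspreservecolimits} to the filtered colimit $(\PW_K)_x = \colim_{U\ni x}\PW_K(U)$, this becomes $\Omega^n_{(\PW_K)_x|\R}$. Now invoking Lemma~\ref{lem:stalkofpwarecolimitsofstarneighborhoods}, which presents $(\PW_K)_x$ as the filtered colimit $\colim_{S\in\Sigma(x)}\Pol(S)$ over the directed set of star neighborhoods of $x$, and applying Lemma~\ref{lem:kahlerformspreservecolimits} once more, we obtain a canonical isomorphism of complexes
$$(\OmegaU^\bullet_{\PW_K|\R})_x \cong \underset{S\in\Sigma(x)}{\colim}\,\Omega^\bullet_{\Pol(S)|\R}.$$
This isomorphism is compatible with the coaugmentations from $\R[0]$.

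Second, since filtered colimits in $\Vect_\R$ are exact, they commute with cohomology, giving
$$\HS^n((\OmegaU^\bullet_{\PW_K|\R})_x) \cong \underset{S\in\Sigma(x)}{\colim}\,\HS^n(\Omega^\bullet_{\Pol(S)|\R}).$$
Each star neighborhood $S\in\Sigma(x)$ is a star with center $x$, so by the Poincar\'e lemma (Corollary~\ref{cor:poincarelemma}) the coaugmentation $\epsilon:\R[0]\to \Omega^\bullet_{\Pol(S)|\R}$ is a quasi-isomorphism. The transition maps in the directed system are induced by restriction to minors and fix the class of $1$, so passing to the colimit preserves the identification with $\R[0]$. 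Hence the induced stalk map
$$\epsilon_x:\R[0]\to (\OmegaU^\bullet_{\PW_K|\R})_x$$
is a quasi-isomorphism, which concludes the proof.

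The main technical point is the double application of Lemma~\ref{lem:kahlerformspreservecolimits} to commute the de Rham construction past both the colimit over open neighborhoods and the colimit over star neighborhoods; once this is in place, the Poincar\'e lemma for $\Pol(S)$ does the rest, and there is no genuine obstacle beyond careful bookkeeping of the compatibility of the coaugmentations with the transition maps.
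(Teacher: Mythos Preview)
Your proof is correct and follows essentially the same route as the paper's own argument: reduce to stalks, identify $(\OmegaU^\bullet_{\PW_K|\R})_x$ with $\Omega^\bullet_{(\PW_K)_x|\R}\cong\colim_{S\in\Sigma(x)}\Omega^\bullet_{\Pol(S)|\R}$ via Lemmas~\ref{lem:kahlerformspreservecolimits} and~\ref{lem:stalkofpwarecolimitsofstarneighborhoods}, and then invoke the Poincar\'e lemma (Corollary~\ref{cor:poincarelemma}). Your write-up is more explicit about the double use of Lemma~\ref{lem:kahlerformspreservecolimits} and the exactness of filtered colimits, but the strategy is identical.
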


\begin{proof}
We prove that $\epsilon$ is a quasi-isomorphism on stalks.
We have
$$(\RU _{|K|}[0])_x\to (\OmegaU ^\bullet _{\PW _K|\R})_x=(\Omega ^\bullet _{\PW _K|\R})_x=\Omega ^\bullet _{(\PW _K) _x|\R}\cong\underset{S\in\Sigma (x)}{\colim}\Omega ^\bullet _{\Pol (S)|\R}.$$
The last isomorphism follows from Lemma~\ref{lem:stalkofpwarecolimitsofstarneighborhoods}.
By Corollary~\ref{cor:poincarelemma} the morphism $(\RU _{|K|}[0])_x\to\colim _{S\in\Sigma (x)} \Omega ^\bullet _{\Pol (S)|\R}$ is a quasi-isomorphism.
\end{proof}

\begin{theorem}\label{thm:lambdaandpsiforpiecewisepolynomialfunctions}
The maps
$$\Lambda _{\PW _K}:\HH ^* (|K|,\RU _{|K|} [0])\to \HS ^* (\Omega ^\bullet _{\PW _K (|K|)|\R})$$
and
$$\Psi _{\PW _K (|K|)} :\HS ^* (\Omega ^\bullet _{\PW _K (|K|)|\R})\to\HH ^* (|K|,\RU _{|K|} [0])$$
are isomorphisms.
\end{theorem}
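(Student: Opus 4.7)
The plan is to observe that this theorem is essentially a consequence of Proposition~\ref{prp:coaugmentationmapforpolyhedraisaqi} together with the general splitting machinery developed earlier in the paper. First I would tackle $\Lambda_{\PW_K}$ directly. Unwinding Definition~\ref{def:lambdamap}, the map $\Lambda_{\PW_K}$ factors as
\begin{equation*}
\HH^*(|K|,\RU_{|K|}[0]) \xrightarrow{\HH(\epsilon)} \HH^*(|K|,\OmegaU^\bullet_{\PW_K|\R}) \xleftarrow{\Upsilon} \HS^*(\OmegaU^\bullet_{\PW_K|\R}(|K|)) \xleftarrow{\HS(\sh)} \HS^*(\Omega^\bullet_{\PW_K(|K|)|\R}).
\end{equation*}
Since $|K|$ is a compact Hausdorff space and $\PW_K$ is soft by Proposition~\ref{prp:ppolissoft}, Lemma~\ref{lem:sheavesomegaaresoft} yields that the sheaves $\OmegaU^n_{\PW_K|\R}$ are soft and hence acyclic, so $\Upsilon$ is an isomorphism; and Lemma~\ref{lem:theglobalsectionsofomegasheaves} gives that $\HS(\sh)$ is an isomorphism. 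The only remaining map, $\HH(\epsilon)$, is an isomorphism precisely because Proposition~\ref{prp:coaugmentationmapforpolyhedraisaqi} says the coaugmentation $\epsilon:\RU_{|K|}[0]\to\OmegaU^\bullet_{\PW_K|\R}$ is a quasi-isomorphism. Composing three isomorphisms, $\Lambda_{\PW_K}$ is an isomorphism.

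For $\Psi_{\PW_K(|K|)}$ I would then invoke Theorem~\ref{thm:compositionforsoftsheaf}. To apply it I need $\PW_K$ to be a soft subsheaf of $C_{|K|}$; softness is Proposition~\ref{prp:ppolissoft}, and the inclusion $\PW_K\hookrightarrow C_{|K|}$ is immediate from the fact that a piecewise polynomial function is continuous on a neighborhood of each point by its local description (Definition~\ref{def:piecewisepolynomialfunctions}). Theorem~\ref{thm:compositionforsoftsheaf} then gives $\Psi_{\PW_K(|K|)}\circ\Lambda_{\PW_K}=\Id$. Since $\Lambda_{\PW_K}$ is already an isomorphism by the previous paragraph, $\Psi_{\PW_K(|K|)}$ is forced to be its two-sided inverse, hence an isomorphism.

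There is no real obstacle left at this stage: all of the substantive content has been absorbed into Proposition~\ref{prp:coaugmentationmapforpolyhedraisaqi} (which is where the Poincaré lemma for $\Pol(S)$ on a star $S$ and the identification of stalks $(\PW_K)_x\cong\colim_{S\in\Sigma(x)}\Pol(S)$ do the real work) and into Theorem~\ref{thm:compositionforsoftsheaf}. The present proof is therefore short, and its main point is simply to verify the two hypotheses needed to plug into these earlier results, namely softness of $\PW_K$ and the containment $\PW_K\subset C_{|K|}$.
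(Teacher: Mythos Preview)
Your proof is correct and follows essentially the same approach as the paper: show $\Lambda_{\PW_K}$ is an isomorphism by invoking Proposition~\ref{prp:coaugmentationmapforpolyhedraisaqi} to make $\HH(\epsilon)$ an isomorphism in the defining diagram of $\Lambda$, then use Theorem~\ref{thm:compositionforsoftsheaf} to conclude $\Psi_{\PW_K(|K|)}$ is the inverse. You are simply more explicit than the paper about verifying the softness and subsheaf hypotheses and about why $\Upsilon$ and $\HS(\sh)$ are isomorphisms, but the logical structure is identical.
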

\begin{proof}
By Theorem~\ref{thm:compositionforsoftsheaf} it is enough to prove that $\Lambda _{\PW _K}$ is an isomorphism.
Recall that $\Lambda _{\PW _K}$ is defined as the diagonal map in the following diagram
\begin{equation*}
\begin{tikzcd}
\HH ^* ({|K|},\RU _{|K|} [0] )\arrow[r,"\HH (\epsilon )"]\arrow[ddr,"\Lambda _{\PW _K}"',dashed] & \HH ^* ({|K|},\OmegaU ^\bullet _{\PW _K|\R})\\
 & \HS ^* (\OmegaU ^\bullet _{\PW _K|\R} ({|K|})) \arrow[u,"\Upsilon"',"\cong"]\\
 & \HS ^* (\Omega ^\bullet _{\PW _K ({|K|})|\R}) \arrow[u,"\cong","\HS (\sh )"'].
\end{tikzcd}
\end{equation*}
By Proposition~\ref{prp:coaugmentationmapforpolyhedraisaqi} the map $\HH (\epsilon )$ is an isomorphism, hence, $\Lambda _{\PW _K}$ is an isomorphism.
\end{proof}

\section{When are the maps $\Lambda$ and $\Psi$ isomorphisms?}\label{sec:themapspsiandlambdaforsoftfunctionalgebras}

\subsection{$\HS ^0 (\Omega ^\bullet _{A|k})$ for a function algebra $A$}\label{ssec:H0forfunctionalgebra}

Let $k$ be a field.

\begin{definition}
We call 
the elements $a_1,\dots ,a_n $ of a $k$-algebra $A$ algebraically independent over $k$ if there is no non-zero polynomial $q$ over $k$ such that $q(a_1,\dots ,a_n) =0$.
We call an element $a$ algebraic over $k$ if there is a non-zero polynomial $p$ over $k$ such that $p(a) =0$.
\end{definition}

\begin{lemma}\label{lem:reducinglemma}
Let $A$ be a $k$-algebra.
Suppose the elements $a_1,\dots ,a_n$ of $A$ are algebraically independent over $k$.
Then there exists a prime ideal $\pp \subset A$ such that the images of $a_1,\dots ,a_n$ in $A/\pp$ are algebraically independent over $k$.
\end{lemma}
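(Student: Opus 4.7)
The plan is to find $\pp$ so that the composition $k[a_1,\dots ,a_n]\hookrightarrow A\to A/\pp$ is injective. Let $B\subset A$ be the $k$-subalgebra generated by $a_1,\dots ,a_n$. The algebraic independence hypothesis says exactly that the homomorphism $k[x_1,\dots ,x_n]\to B$, $x_i\mapsto a_i$, is an isomorphism. In particular $B$ is an integral domain, so $S:=B\setminus \lbrace 0\rbrace$ is a multiplicatively closed subset of $A$ that does not contain~$0$.

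The second step is the classical observation (proved via Zorn's lemma applied to the poset of ideals of $A$ disjoint from $S$, ordered by inclusion) that every multiplicative subset of a commutative ring avoiding $0$ is disjoint from some prime ideal. Applied to our $S$, this yields a prime ideal $\pp\subset A$ with $\pp\cap S=\emptyset$, equivalently $\pp\cap B=0$.

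The final step is immediate: the composition $B\hookrightarrow A\twoheadrightarrow A/\pp$ has kernel $\pp\cap B=0$, hence is injective. Any non-zero polynomial relation over $k$ satisfied by the images of $a_1,\dots ,a_n$ in $A/\pp$ would pull back through this injection to a non-zero polynomial relation in $B$, contradicting the algebraic independence of the $a_i$ in $A$.

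There is no real obstacle; the only point that requires care is verifying that $S$ is genuinely a multiplicative subset of $A$ avoiding~$0$, which is where the algebraic independence assumption is used (it ensures that no non-zero polynomial in the $a_i$ vanishes in $A$, so that $B$ is a domain viewed as a subring of $A$).
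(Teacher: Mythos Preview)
Your proof is correct. The paper itself does not give an argument but defers to \cite[Proposition~6]{gomez1990number}; your self-contained route via the multiplicative set $S=B\setminus\{0\}$ and the standard fact that an ideal maximal among those disjoint from $S$ is prime is the cleanest way to establish the claim, and works uniformly without any finiteness hypotheses on $A$.
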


\begin{proof}
See \cite[Proposition~$6$]{gomez1990number}.
\end{proof}

\begin{lemma}\label{lem:FieldElementCriteria}
Suppose $K/k$ is a field extension for $k$ of characteristic $0$.
Consider the differential $d:K\to \Omega ^1_{K|k}$.
Then for $x\in K$ we have $dx = 0$ iff $x$ is algebraic over $k$.
\end{lemma}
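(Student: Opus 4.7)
My plan is to prove the two implications separately, with the easier direction handled by the minimal polynomial trick and the harder direction by constructing a suitable derivation.

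For the implication $x$ algebraic $\Rightarrow dx=0$, I would let $p\in k[t]$ be the minimal polynomial of $x$ over $k$. Since $k$ has characteristic $0$, the polynomial $p$ is separable, so $p'(x)\neq 0$ in $K$. Applying the Leibniz rule to $p(x)=0$ in $K$, I get $p'(x)\,dx=0$ in $\Omega^1_{K|k}$; since $p'(x)$ is a nonzero element of the field $K$ it is invertible in the $K$-module $\Omega^1_{K|k}$, hence $dx=0$.

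For the converse I will prove the contrapositive: if $x$ is transcendental over $k$, then $dx\neq 0$. The strategy is to exhibit a $K$-linear functional on $\Omega^1_{K|k}$ that does not vanish on $dx$, which by the universal property amounts to constructing a $k$-derivation $D\colon K\to K$ with $D(x)=1$. First I choose a transcendence basis $B$ of $K$ over $k$ containing the element $x$ (possible by Zorn's lemma since $x$ is transcendental). On the purely transcendental extension $k(B)$, the formal partial derivative $\partial/\partial x$ is a well-defined $k$-derivation sending $x\mapsto 1$ and $b\mapsto 0$ for the other basis elements $b\in B\setminus\{x\}$.

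It remains to extend this derivation from $k(B)$ to $K$. Here I use that $K/k(B)$ is algebraic, and since $\operatorname{char}k=0$ every algebraic extension is separable; the standard extension theorem for derivations on separable algebraic field extensions (see, e.g., \cite[Theorem~$27.2$]{matsumura1989commutative}) then provides a unique $k$-derivation $D\colon K\to K$ extending $\partial/\partial x$, in particular with $D(x)=1\neq 0$. By the universal property of $\Omega^1_{K|k}$, $D$ factors through a $K$-linear map $\varphi\colon\Omega^1_{K|k}\to K$ with $\varphi(dx)=1$, forcing $dx\neq 0$. The main subtlety is really just invoking the separability correctly; both the choice of transcendence basis and the uniqueness of the extension rely essentially on characteristic $0$, and everything else is formal.
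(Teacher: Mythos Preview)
Your proof is correct. The paper itself does not give an argument but merely cites \cite[\S 26]{matsumura1989commutative}; your write-up is precisely the standard proof one finds there (the separable-minimal-polynomial computation for one direction, and the construction of a derivation via a transcendence basis and extension along a separable algebraic extension for the other), so the approaches coincide.
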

\begin{proof}
See \cite[$\S 26$]{matsumura1989commutative}.
\end{proof}

\begin{lemma}\label{lem:ZeroCohomologyOfGeneralAlgebras}
Consider a $k$-algebra $A$ for $k$ of characteristic $0$ and take $a\in A$ such that $da=0$ in $\Omega ^1 _{A|k}$.
Then $a$ is algebraic.
\end{lemma}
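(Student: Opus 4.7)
The plan is to argue by contrapositive: assume $a\in A$ is transcendental over $k$ and produce a field quotient in which the image of $a$ is still transcendental, then apply Lemma~\ref{lem:FieldElementCriteria} to deduce $da\neq 0$.

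First I would invoke Lemma~\ref{lem:reducinglemma} with $n=1$ and $a_1=a$: since $\{a\}$ is algebraically independent over $k$, there is a prime ideal $\pp\subset A$ such that the image $\bar a$ of $a$ in the integral domain $A/\pp$ is algebraically independent, i.e.\ transcendental, over $k$. Let $K=\operatorname{Frac}(A/\pp)$, so that the image of $a$ in $K$, which I also denote by $\bar a$, is transcendental over $k$.

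Next I would use functoriality of K\"ahler differentials. The composite homomorphism $\varphi:A\to A/\pp\hookrightarrow K$ induces a morphism of dg-algebras $\Omega_{\varphi}:\Omega^\bullet_{A|k}\to\Omega^\bullet_{K|k}$, and in particular $\Omega_\varphi(da)=d\bar a$ in $\Omega^1_{K|k}$. Since $da=0$ by hypothesis, we get $d\bar a=0$. By Lemma~\ref{lem:FieldElementCriteria} (which applies because $k$ has characteristic $0$ and $K/k$ is a field extension), this forces $\bar a$ to be algebraic over $k$, contradicting the choice of $\pp$.

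There is essentially no obstacle here: the two ingredients (the existence of a prime quotient preserving algebraic independence and the field-theoretic criterion $dx=0\Leftrightarrow x$ algebraic) have already been imported as lemmas, so the argument is just a one-line reduction from an arbitrary $k$-algebra to a field via $A\twoheadrightarrow A/\pp\hookrightarrow\operatorname{Frac}(A/\pp)$, combined with the naturality of $d$.
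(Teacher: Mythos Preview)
Your proof is correct and follows exactly the same approach as the paper: reduce to a field by passing through $A\to A/\pp\to\mathrm{Frac}(A/\pp)$ using Lemma~\ref{lem:reducinglemma}, then invoke Lemma~\ref{lem:FieldElementCriteria} and functoriality of $d$ to derive a contradiction.
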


This is \cite[Proposition~$7$]{gomez1990number} where the assumption on characteristic being zero is implicit.

\begin{proof}
Assume $a$ is transcendental.
By Lemma~\ref{lem:reducinglemma} there exists a prime $\pp\subset A$ such that the image $\bar{a}$ of $a$ in $A/\pp$ is transcendental.
Hence, $\bar{a}$ is transcendental in the field of fractions $\mathrm{Frac} (A/\pp )$.
By Lemma~\ref{lem:FieldElementCriteria} $d\bar{a}\neq 0$ in $\Omega ^1 _{\mathrm{Frac} (A/\pp )|k}$, which is a contradiction.

\end{proof}

We call a subalgebra $A$ of $\mathrm{Maps} (X,k)$ a \textit{function algebra} on a set $X$.
\begin{theorem}\label{thm:finitenumberofvalues}
Take $A$ a function algebra on $X$ and $a\in A$.
Then $da=0$ in $\Omega ^1 _{A|k}$ if and only if $a$ takes a finite number of values.
\end{theorem}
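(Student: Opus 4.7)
The plan is to prove the two directions separately. The forward implication will reduce to Lemma~\ref{lem:ZeroCohomologyOfGeneralAlgebras} (which is where the characteristic-zero hypothesis enters), and the reverse implication will rest on an elementary idempotent argument built from Lagrange interpolation.

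For the forward direction, assume $da=0$ in $\Omega^1_{A|k}$. By Lemma~\ref{lem:ZeroCohomologyOfGeneralAlgebras}, $a$ is algebraic over $k$, so there exists a non-zero polynomial $p\in k[t]$ with $p(a)=0$ in $A$. Evaluating this identity at an arbitrary point $x\in X$ gives $p(a(x))=0$ in $k$. Hence every value $a(x)$ lies in the finite zero set of $p$, which has at most $\deg p$ elements, and we are done.

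For the reverse direction, suppose $a$ takes the distinct values $\lambda_1,\dots,\lambda_n\in k$. Form the Lagrange interpolation polynomials
\[
p_i(t) := \prod_{j\neq i}\frac{t-\lambda_j}{\lambda_i-\lambda_j}\in k[t],
\]
which satisfy $p_i(\lambda_j)=\delta_{ij}$. Setting $e_i:=p_i(a)\in A$, the function $e_i$ is the characteristic function of the level set $a^{-1}(\lambda_i)$, so we have the identities $e_i^2=e_i$, $\sum_i e_i = 1$ and $a=\sum_i\lambda_i e_i$ in $A$. Differentiating $e_i^2=e_i$ in $\Omega^1_{A|k}$ yields $(2e_i-1)\,de_i=0$. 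Since
\[
(2e_i-1)^2 = 4e_i^2 - 4e_i + 1 = 1,
\]
multiplying the previous identity by $(2e_i-1)$ gives $de_i=0$ for every $i$, and hence $da=\sum_i\lambda_i\,de_i=0$.

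I do not anticipate any serious obstacle. The forward direction is immediate once one has Lemma~\ref{lem:ZeroCohomologyOfGeneralAlgebras} and evaluates pointwise. The only subtlety in the reverse direction is to check that the idempotents $e_i$ genuinely lie in the subalgebra $A$, but this is automatic because $A$ is a $k$-subalgebra of $\mathrm{Maps}(X,k)$ and so is closed under polynomial operations on its own elements.
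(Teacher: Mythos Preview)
Your proof is correct. The forward direction is identical to the paper's.

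For the reverse direction, the paper argues differently: it sets $q(t)=\prod_i(t-\lambda_i)$, so that $q(a)=0$; since $q$ has simple roots, $\gcd(q,q')=1$, and a B\'ezout identity $h_0q+h_1q'=1$ evaluated at $a$ shows $q'(a)$ is a unit; then applying $d$ to $q(a)=0$ yields $q'(a)\,da=0$ and hence $da=0$. Your route via Lagrange idempotents is equally elementary and arguably more structural: you exhibit $a$ as a $k$-linear combination of orthogonal idempotents $e_i\in A$ and kill each $de_i$ using the unit $(2e_i-1)$. Both arguments work without any characteristic hypothesis (that enters only in the forward direction through Lemma~\ref{lem:ZeroCohomologyOfGeneralAlgebras}); the paper's version avoids introducing the auxiliary elements $e_i$, while yours makes explicit the idempotent decomposition underlying the finite-valued hypothesis.
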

This theorem is proved in \cite[proof of Theorem~$8$]{gomez1990number}.
We give a proof, which is different in the ``if'' direction.
A related result is \cite[Proposition~3]{OsbornDerivations}.
\begin{proof}
Assume first $da=0$.
By Lemma~\ref{lem:ZeroCohomologyOfGeneralAlgebras} there exists a non-zero polynomial $p$ such that $p(a)=0$.
Hence, $a$ takes a finite number of values.

Conversely, assume $a$ attains distinct values $r_1,\dots,r_m \in k$ and consider the polynomial $q(t)=(t-r_1)\dots(t-r_m)$.
Clearly $q(a)=0$ and $\gcd (q,q^\prime )=1$.
Hence, there exist polynomials $h_0,h_1\in k[t]$ such that $h_0q+h_1q^\prime =1$.
Substitute $a$ into this equality and obtain $h_1(a)q^\prime (a)=1$; hence, $q^\prime(a)$ is invertible.
Take the equality $q(a)=0$ and apply $d$ to both sides, we get $q^\prime(a)da=0$ and subsequently $da=0$.
\end{proof}

\begin{corollary}\label{cor:locallyconstantfunctions}
Take $A$ a function algebra on a space $X$.
Then $\HS ^0 (\Omega ^\bullet _{A|k})$ is the vector space of functions $a\in A$ that take a finite number of values.
\end{corollary}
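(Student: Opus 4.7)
The plan is very short, since this statement is essentially a reformulation of the preceding Theorem~\ref{thm:finitenumberofvalues}. By the definition of the de Rham dg-algebra, $\Omega^\bullet_{A|k}$ is a non-negative cochain complex whose degree-zero term is $A$, so
\[
\HS^0(\Omega^\bullet_{A|k}) \;=\; \Ker\bigl(d:A\to \Omega^1_{A|k}\bigr).
\]
Thus the only thing to check is that $\Ker d$ coincides with the subspace of $a\in A$ taking finitely many values.

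First I would invoke Theorem~\ref{thm:finitenumberofvalues} directly: it asserts precisely that $da=0$ in $\Omega^1_{A|k}$ if and only if $a$ attains only finitely many values in $k$. Combining this with the identification above gives the description of $\HS^0(\Omega^\bullet_{A|k})$ as a set. The $k$-vector space structure is inherited from $A$, and since the subset of functions with finite image is closed under addition and $k$-scalar multiplication (the union of the value sets of $a$ and $b$ bounds the value sets of $\lambda a + \mu b$), it is in fact a $k$-linear subspace, matching the $k$-linear subspace structure on $\Ker d$.

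There is essentially no obstacle: the only nontrivial input is Theorem~\ref{thm:finitenumberofvalues}, which has already been proved. The corollary is purely a repackaging, so I would write a two-line argument noting the identification of $\HS^0$ with $\Ker d$ and citing the theorem.
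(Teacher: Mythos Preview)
Your proposal is correct and matches the paper's approach: the paper gives no explicit proof of this corollary, treating it as an immediate consequence of Theorem~\ref{thm:finitenumberofvalues}, which is exactly what you do. One small remark: your separate verification that the finite-valued functions form a subspace is unnecessary (and the parenthetical justification is slightly off---the value set of $\lambda a+\mu b$ is bounded by the sumset $\lambda\cdot\mathrm{values}(a)+\mu\cdot\mathrm{values}(b)$, not their union), since $\Ker d$ is automatically a $k$-subspace of $A$ by linearity of $d$, and you have already identified it set-theoretically with the finite-valued functions.
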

A related result is \cite[Proposition~$5$]{OsbornDerivations}.

\begin{proposition}\label{prp:lambdaisisomorphismindegree0}
For a soft subsheaf of algebras $\FC \hookrightarrow C_X$ on a compact Hausdorff space $X$ the map
$\Lambda _\FC:\HH ^0 (X,\RU [0])\to \HS ^0 (\Omega ^\bullet _{\FC (X)|\R})$
is an isomorphism.
\end{proposition}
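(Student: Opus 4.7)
The plan is to identify both source and target with the same concrete object, namely the space of locally constant $\R$-valued functions on $X$, and verify that $\Lambda_\FC$ realizes the identity under this identification. All four steps will rely on results stated earlier in the excerpt.

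First I would identify the source. Since $\Upsilon$ is an isomorphism in degree $0$ for any complex of sheaves, we have $\HH^0(X, \RU_X[0]) \cong \RU_X(X)$, which is the space of locally constant $\R$-valued functions on $X$. Next, I would identify the target using Corollary~\ref{cor:locallyconstantfunctions}: since $\FC(X)$ is a function algebra on $X$, $\HS^0(\Omega^\bullet_{\FC(X)|\R})$ consists precisely of those $f \in \FC(X)$ which take finitely many values. Because $X$ is compact Hausdorff and $\FC(X) \subset C(X)$, such an $f$ is continuous with finite image, hence the preimage of each value is clopen and $f$ is locally constant.

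Then I would check that every locally constant $f: X \to \R$ actually lies in $\FC(X)$. Such an $f$ can be written as a finite sum $\sum_i c_i \chi_{U_i}$ over a finite clopen partition $\{U_i\}$ of $X$ (finite by compactness of $X$). On each $U_i$ the function $f$ equals the constant $c_i$, which lies in $\FC(U_i)$ since $\FC$ is a sheaf of $\R$-algebras and therefore contains the constants locally. By the sheaf condition these local pieces glue to an element of $\FC(X)$, and by separation this glued section is $f$. Consequently the set of locally constant functions on $X$ and the set of locally constant functions lying in $\FC(X)$ coincide.

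Finally I would unwind $\Lambda_\FC$ in degree $0$, following Definition~\ref{def:lambdamap}: it is the composition $\HS(\sh)^{-1} \circ \Upsilon^{-1} \circ \HH(\epsilon)$. Under the identifications $\HH^0(X, \RU_X[0]) = \RU_X(X)$ and $\HH^0(X, \OmegaU^\bullet_{\FC|\R}) \supset \HS^0(\OmegaU^0_{\FC|\R}(X)) = \FC(X)$ (using that $\OmegaU^0_{\FC|\R} = \FC$ and Lemma~\ref{lem:theglobalsectionsofomegasheaves} for the last isomorphism), the coaugmentation $\HH(\epsilon)$ in degree $0$ is the tautological inclusion sending a locally constant function to itself viewed as an element of $\FC(X)$; the subsequent maps $\Upsilon^{-1}$ and $\HS(\sh)^{-1}$ act as the identity on the underlying element. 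Combining this with the identifications of source and target above yields that $\Lambda_\FC$ is a bijection in degree $0$.

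The one step requiring genuine care is the verification in degree $0$ that $\Lambda_\FC$ truly is the naive inclusion, which amounts to an honest but routine diagram chase through $\Upsilon$, $\sh$ and $\epsilon$; the rest is bookkeeping together with Corollary~\ref{cor:locallyconstantfunctions} and the sheaf axiom for $\FC$.
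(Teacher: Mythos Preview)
Your proposal is correct and follows essentially the same approach as the paper: both identify source and target with the locally constant functions via Corollary~\ref{cor:locallyconstantfunctions} and compactness, then verify that $\Lambda_\FC$ realizes this identification. The paper packages your gluing argument as the sheaf morphism $\RU_X\to\FC$ and carries out your ``routine diagram chase'' explicitly via a commutative square relating $\tilde\epsilon:\RU_X(X)[0]\to\Omega^\bullet_{\FC(X)|\R}$ to $\epsilon(X)$ through $\sh$ and $\Upsilon$.
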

\begin{proof}
By Corollary~\ref{cor:locallyconstantfunctions} the space $\HS ^0 (\Omega ^\bullet _{\FC (X)|\R} )$ consists of functions $f\in \FC (X)$ that take a finite number of values.
As $\FC $ is a subsheaf of $C_X$ and $X$ is compact, the functions in $\FC (X)$ that take a finite number of values are exactly the locally constant functions from $\FC (X)$.
The morphism of sheaves of algebras $\RU _X \to \FC$ induces a homomorphism $\RU _X (X)\to \FC (X) = \Omega ^0 _{\FC (X)|\R}$.
By above, this homomorphism can be extended to the morphism of complexes $\tilde{\epsilon}:\RU _X(X)[0]\to \Omega ^\bullet _{\FC (X)|\R}$ and $\HS (\tilde{\epsilon})$ is an isomorphism in degree $0$.

The coaugmentation $\epsilon:\RU_X[0]\to\OmegaU^\bullet _{\FC |\R}$ induces a morphism of complexes $\epsilon (X):\RU_X (X)[0]\to \OmegaU^\bullet _{\FC |\R} (X)$.
The following diagram commutes:
\begin{equation*}
\begin{tikzcd}[column sep=0.7in]
\RU_X (X)[0]\arrow[d,"\epsilon (X)"]\arrow[rd,"\tilde{\epsilon}"]&\\
\OmegaU ^\bullet _{\FC |\R} (X)&\Omega ^\bullet _{\FC (X)|\R}\arrow[l,"\sh"].
\end{tikzcd}
\end{equation*}

Consider the diagram:
\begin{equation*}
\begin{tikzcd}
\HH ^0 (X,\RU _X [0])\arrow[d,"\HH (\epsilon )"']\arrow[rrd,"\Lambda _\FC",bend left = 40]&\HS ^0 (\RU _X (X)[0])\arrow[l,"\Upsilon","\cong"']\arrow[d,"\HS (\epsilon (X))"]\arrow[rd,"\HS (\tilde{\epsilon })","\cong"']&\\
\HH ^0 (X,\OmegaU ^\bullet _{\FC |\R})&\HS ^0 (\OmegaU ^\bullet _{\FC |\R} (X))\arrow[l,"\Upsilon","\cong"']&\HS ^0 (\Omega ^\bullet _{\FC (X)|\R})\arrow[l,"\HS (\sh )","\cong"'].
\end{tikzcd}
\end{equation*}
The outer contour commutes by the definition of $\Lambda _\FC$.
The right triangle commutes by the above diagram.
The left square obviously commutes.
Hence, the whole diagram is commutative.
The upper map $\Upsilon$ is an isomorphism by a property of $\Upsilon$ (see Subsection~\ref{ssec:sheaves}).
Therefore, $\Lambda _\FC$ is an isomorphism.
\end{proof}

It turns out that the group $\HS ^0 (\Omega ^\bullet _{C(X)|\R})$ behaves just as we expect.
\begin{corollary}\label{cor:psiisisoindegree0}
For a soft subsheaf of algebras $\FC$ of $C_X$ on a compact Hausdorff space $X$ the map
$$\Psi _{\FC (X)}:\HS ^0 (\Omega ^\bullet _{\FC (X)|\R}) \to \HH ^0 (X,\RU [0])$$
is an isomorphism.
\end{corollary}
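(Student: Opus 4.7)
The proof is immediate from the two results that have just been established. My plan is simply to combine Theorem~\ref{thm:compositionforsoftsheaf} with Proposition~\ref{prp:lambdaisisomorphismindegree0}.

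By Theorem~\ref{thm:compositionforsoftsheaf}, specialized to degree $0$, we have the identity
\begin{equation*}
\Psi_{\FC(X)} \circ \Lambda_\FC = \Id_{\HH^0(X,\RU_X[0])}.
\end{equation*}
By Proposition~\ref{prp:lambdaisisomorphismindegree0}, the map $\Lambda_\FC$ is an isomorphism in degree $0$. Composing the above identity with $\Lambda_\FC^{-1}$ on the right yields
\begin{equation*}
\Psi_{\FC(X)} = \Lambda_\FC^{-1}
\end{equation*}
as maps $\HS^0(\Omega^\bullet_{\FC(X)|\R}) \to \HH^0(X,\RU_X[0])$. Hence $\Psi_{\FC(X)}$ is an isomorphism in degree $0$, being the inverse of an isomorphism.

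There is no real obstacle here: the whole content of the corollary has been packaged into the two preceding results. The substantive work was already done in establishing Proposition~\ref{prp:lambdaisisomorphismindegree0} (identifying $\HS^0(\Omega^\bullet_{\FC(X)|\R})$ with locally constant sections via Corollary~\ref{cor:locallyconstantfunctions}) and in Theorem~\ref{thm:compositionforsoftsheaf} (the splitting $\Psi \circ \Lambda = \Id$).
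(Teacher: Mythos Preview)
Your proof is correct and matches the paper's own argument exactly: the paper's proof reads ``It follows directly from Proposition~\ref{prp:lambdaisisomorphismindegree0} and Theorem~\ref{thm:compositionforsoftsheaf}.'' You have simply unpacked this one-line reference by making the composition with $\Lambda_\FC^{-1}$ explicit.
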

\begin{proof}
It follows directly from Proposition~\ref{prp:lambdaisisomorphismindegree0} and Theorem~\ref{thm:compositionforsoftsheaf}.
\end{proof}

\subsection{$\Lambda$ and $\Psi$ are not isomorphisms in general}\label{ssec:lambdaandpsiarenotiso}

Take $k$ a field.

\begin{lemma}\label{lem:tracelemma}
Suppose $k$ is of characteristic $0$.
Suppose $F/k$ is a field extension and $L/F$ is a finite field extension.
Then the map
$$\HS (\Omega _i) :\HS ^* (\Omega ^\bullet _{F|k})\to \HS ^* (\Omega ^\bullet _{L|k})$$
induced by the inclusion $i:F\hookrightarrow L$ is injective.
\end{lemma}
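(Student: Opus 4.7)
The plan is to construct a trace map $\mathrm{Tr}\colon \Omega^\bullet_{L|k} \to \Omega^\bullet_{F|k}$, a morphism of complexes of $k$-vector spaces, satisfying the retraction identity $\mathrm{Tr} \circ \Omega_i = [L:F] \cdot \id$. Since $k$ has characteristic $0$, the integer $[L:F]$ is a unit in $k$, so this identity already at the level of complexes forces $\HS(\Omega_i)$ to be split injective.

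First I would note that in characteristic $0$ every finite field extension is separable, so $\Omega^1_{L|F} = 0$: for $\alpha \in L$ with minimal polynomial $p$, differentiating $p(\alpha) = 0$ in $\Omega^1_{L|F}$ gives $p'(\alpha)\,d\alpha = 0$, and invertibility of $p'(\alpha)$ forces $d\alpha = 0$. The fundamental (Jacobi--Zariski) exact sequence then collapses to an isomorphism of $L$-modules $\Omega^1_{L|k} \cong L \otimes_F \Omega^1_{F|k}$, and taking exterior powers yields $\Omega^p_{L|k} \cong L \otimes_F \Omega^p_{F|k}$ for every $p$. Under these identifications, $\Omega_i$ is the map $\omega \mapsto 1 \otimes \omega$, and I define $\mathrm{Tr}$ to be $\tr_{L/F} \otimes \id_{\Omega^p_{F|k}}$, where $\tr_{L/F}\colon L \to F$ is the classical trace of a separable field extension. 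The retraction identity follows at once from $\tr_{L/F}(1) = [L:F]$.

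The main obstacle will be checking that $\mathrm{Tr}$ commutes with the de Rham differential, i.e.\ that $\mathrm{Tr}(d\ell) = d\,\tr_{L/F}(\ell)$ in $\Omega^1_{F|k}$ for every $\ell \in L$; by $F$-linearity of $\mathrm{Tr}$ and the fact that $\Omega^\bullet_{L|k}$ is generated as a dg-algebra in degrees $0$ and $1$, this degree-zero identity implies compatibility with $d$ in all degrees. For $\ell \in L$ with minimal polynomial $q(x) = x^d + q_{d-1}x^{d-1} + \dots + q_0 \in F[x]$, differentiating $q(\ell) = 0$ in $\Omega^1_{L|k}$ and solving for $d\ell$ expresses it as an explicit element of $L \otimes_F \Omega^1_{F|k}$ in terms of $\ell$ and the $dq_i$; contracting with $\tr_{L/F}$ and invoking Newton's identities relating power sums $\tr_{F(\ell)/F}(\ell^j)$ to the elementary symmetric functions $q_i$ matches the two sides. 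Alternatively, one may simply quote the general existence of the trace on de Rham complexes for finite étale extensions, as developed in \cite[\S 16]{KunzKahlerDifferentials}.

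Passing to cohomology, the retraction identity becomes $\HS(\mathrm{Tr}) \circ \HS(\Omega_i) = [L:F] \cdot \id_{\HS^*(\Omega^\bullet_{F|k})}$, which is an automorphism since $[L:F]$ is a nonzero rational and $k \supset \Q$. Hence $\HS(\Omega_i)$ is a split injection, as claimed.
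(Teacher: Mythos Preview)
Your argument is correct and follows the same route as the paper: a trace morphism $\Sigma\colon\Omega^\bullet_{L|k}\to\Omega^\bullet_{F|k}$ extending $\tr_{L/F}$ satisfies $\Sigma\circ\Omega_i=[L:F]\cdot\id$, and characteristic zero finishes. The paper simply invokes \cite[\S 16]{KunzKahlerDifferentials} for the existence of $\Sigma$ as an $\Omega^\bullet_{F|k}$-linear chain map (your stated alternative) and does not attempt the explicit construction; one small remark on your version is that the reduction of $\mathrm{Tr}\circ d=d\circ\mathrm{Tr}$ to degree zero really needs the $\Omega^\bullet_{F|k}$-linearity of $\mathrm{Tr}=\tr_{L/F}\otimes\id$ rather than mere $F$-linearity plus ``generated as a dg-algebra'', since $\mathrm{Tr}$ is not multiplicative.
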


\begin{proof}
There is a morphism of complexes $\Sigma :\Omega ^\bullet _{L|k}\to \Omega ^\bullet _{F|k} $ with the following properties (see \cite[$\S 16$]{KunzKahlerDifferentials}):
\begin{enumerate}
\item
if we consider $\Omega ^\bullet _{L|k}$ as a $\Omega ^\bullet _{F|k}$-module, then $\Sigma$ is $\Omega ^\bullet _{F|k}$-linear;
\item
the restriction of $\Sigma$ to the elements of degree $0$ coincides with the trace $\sigma :L\to F$.
\end{enumerate}
Therefore, the composition
$$\Omega ^\bullet _{F|k} \xrightarrow{\Omega _i} \Omega ^\bullet _{L|k}\xrightarrow{\Sigma}\Omega ^\bullet _{F|k}$$
is the multiplication by $\sigma (1)=[L:F]$.
As $k$ is of characteristic zero the map $\HS (\Omega _i) $ is an injection.
\end{proof}

\begin{lemma}\label{lem:transctracelemma}
Let $K/k$ be a field extension, where $K$ is an infinite field, and $K(\Gamma )/K$, where $\Gamma = \lbrace \gamma _1,\dots ,\gamma _l\rbrace$, be a purely transcendental extension.
Then the map
$$\HS (\Omega _i) :\HS ^* (\Omega ^\bullet _{K|k})\to \HS ^* (\Omega ^\bullet _{K(\Gamma)|k}),$$
induced by the inclusion $i:K\hookrightarrow K(\Gamma)$, is injective.
\end{lemma}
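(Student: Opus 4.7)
The plan is to use a specialization (evaluation) argument exploiting the fact that $K$ is infinite. Suppose $[\omega] \in \HS^n(\Omega^\bullet_{K|k})$ maps to zero under $\HS(\Omega_i)$, so there exists $\eta \in \Omega^{n-1}_{K(\Gamma)|k}$ with $d\eta = \Omega_i(\omega)$ in $\Omega^n_{K(\Gamma)|k}$. The goal is to find a $k$-algebra retraction that sends $\eta$ to an explicit $(n-1)$-form over $K$ witnessing the exactness of $\omega$ there.

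First, I would use Lemma~\ref{lem:kahlerformspreservecolimits} together with the fact that $K(\Gamma)$ is the filtered colimit of its $K$-subalgebras of the form $K[\Gamma][f^{-1}]$ for nonzero $f \in K[\Gamma]$ (where $K[\Gamma] := K[\gamma_1,\dots,\gamma_l]$). Since $\Omega^n$ commutes with filtered colimits, there exists a nonzero $f \in K[\Gamma]$ such that $\eta$ is already represented in $\Omega^{n-1}_{K[\Gamma][f^{-1}]|k}$ and the equation $d\eta = \Omega_j(\omega)$ holds in $\Omega^n_{K[\Gamma][f^{-1}]|k}$, where $j: K \hookrightarrow K[\Gamma][f^{-1}]$ is the inclusion.

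Next, since $K$ is infinite and $f$ is a nonzero polynomial in $l$ variables over $K$, by a standard induction there exists a point $c = (c_1, \dots, c_l) \in K^l$ with $f(c) \neq 0$. Evaluation at $c$ then gives a well-defined $k$-algebra homomorphism $\mathrm{ev}_c : K[\Gamma][f^{-1}] \to K$, $\gamma_i \mapsto c_i$, and by construction $\mathrm{ev}_c \circ j = \mathrm{id}_K$.

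Finally, apply the induced morphism of dg-algebras $\Omega_{\mathrm{ev}_c} : \Omega^\bullet_{K[\Gamma][f^{-1}]|k} \to \Omega^\bullet_{K|k}$ to the equation $d\eta = \Omega_j(\omega)$. Functoriality yields
\[
d\bigl(\Omega_{\mathrm{ev}_c}(\eta)\bigr) \;=\; \Omega_{\mathrm{ev}_c}\bigl(\Omega_j(\omega)\bigr) \;=\; \Omega_{\mathrm{ev}_c \circ j}(\omega) \;=\; \omega,
\]
so $\omega$ is exact in $\Omega^\bullet_{K|k}$ and $[\omega]=0$ there. The only nontrivial ingredients are the colimit argument clearing denominators (immediate from Lemma~\ref{lem:kahlerformspreservecolimits}) and the existence of a nonvanishing point for $f$, which is where the hypothesis that $K$ is infinite enters; I do not foresee any real obstacle beyond packaging these two facts.
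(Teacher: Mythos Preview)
Your proposal is correct and follows essentially the same approach as the paper: write $K(\Gamma)$ as a filtered colimit of localizations $K[\Gamma][S^{-1}]$, use that $K$ is infinite to find an evaluation point avoiding the finitely many denominators, and obtain a $k$-algebra retraction whose induced map on de Rham cohomology splits $\HS(\Omega_i)$. The only cosmetic difference is that the paper localizes at a finite set $S$ and argues abstractly that the composite on cohomology is the identity, whereas you invert a single $f$ (equivalent, since $K[\Gamma][S^{-1}]=K[\Gamma][(\prod_{s\in S}s)^{-1}]$) and chase a specific class $[\omega]$.
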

\begin{proof}
The field $K(\Gamma)$ is the colimit over finite sets $S\subset K[\Gamma]-\lbrace 0\rbrace$ of the localizations of $K[\Gamma]$:
$$K(\Gamma)\cong \underset{S\subset K[\Gamma]-\lbrace 0\rbrace}{\colim}K[\Gamma][S^{-1}].$$
Therefore, it is enough to prove that the map
$$\HS (\Omega _{i_S}):\HS ^* (\Omega ^\bullet _{K|k})\to \HS ^* (\Omega ^\bullet _{K[\Gamma][S^{-1}]|k})$$
induced by the inclusion $i_S:K\hookrightarrow K[\Gamma][S^{-1}]$ is injective for each $S$.

There exists a point $(\bar{\gamma}_1,\dots,\bar{\gamma}_l)\in K^{l}$ which is not a zero of any element of $S$.
Consider the homomorphism $K[\Gamma]\to K$, $\gamma _j\mapsto \bar{\gamma} _j$.
By the universal property of localization it can be extended to a homomorphism $\tau :K[\Gamma][S^{-1}]\to K$ such that $\tau |_K$ is the identity.
Therefore, the composition
$$\HS ^* (\Omega ^\bullet _{K|k})\xrightarrow{\HS (\Omega _{i_S})} \HS ^* (\Omega ^\bullet _{K[\Gamma][S^{-1}]|k})\xrightarrow{\HS (\Omega _\tau )} \HS ^* (\Omega ^\bullet _{K|k})$$
is the identity map, which suffices.
\end{proof}

\begin{lemma}\label{lem:technicalexactnesslemma}
Suppose the characteristic of $k$ is zero.
The equation
\begin{equation}\label{eq:formequation}
\sum _{i=1} ^n x_i (\partial /\partial x_i) F_i = c
\end{equation}
where $c\in k$ and $c\neq 0$, has no solutions in rational functions $F_i\in k(x_1,\dots ,x_n)$.
\end{lemma}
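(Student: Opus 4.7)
The plan is to prove the lemma by induction on $n$, using Laurent expansion in one variable at a time to reduce an equation of the given form in $n$ variables to one of the same form in $n-1$ variables.

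For the base case $n = 1$: any $F_1 \in k(x_1)$ embeds into the Laurent series field $k((x_1))$, so I would write $F_1 = \sum_{m \geq m_0} a_m x_1^m$ with $a_m \in k$. Then $x_1\,\partial F_1/\partial x_1 = \sum_m m a_m x_1^m$, and comparing the coefficient of $x_1^0$ in the equation $x_1\,\partial F_1/\partial x_1 = c$ gives $0 \cdot a_0 = c$, a contradiction.

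For the inductive step, suppose that $\sum_{i=1}^n x_i\,\partial F_i/\partial x_i = c$ holds in $k(x_1, \ldots, x_n)$, and embed $k(x_1, \ldots, x_n) \hookrightarrow k(x_1, \ldots, x_{n-1})((x_n))$ by Laurent expansion in $x_n$ (every nonzero element of $k(x_1, \ldots, x_{n-1})[x_n]$ is invertible in $k(x_1, \ldots, x_{n-1})((x_n))$ since its lowest nonzero coefficient can be factored out, leaving a power series with a nonzero constant term). Writing $F_i = \sum_m f_{i,m}\, x_n^m$ with $f_{i,m} \in k(x_1, \ldots, x_{n-1})$, and extending the derivations $\partial/\partial x_i$ to the Laurent series field term-by-term (acting on coefficients for $i < n$, and via $x_n \mapsto 1$ for $i = n$), I obtain for $i < n$ that $x_i\,\partial F_i/\partial x_i = \sum_m x_i\,(\partial f_{i,m}/\partial x_i)\, x_n^m$, and for $i = n$ that $x_n\,\partial F_n/\partial x_n = \sum_m m\, f_{n,m}\, x_n^m$. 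Reading off the coefficient of $x_n^0$ from $\sum_i x_i\,\partial F_i/\partial x_i = c$ then yields
$$\sum_{i=1}^{n-1} x_i\,\frac{\partial f_{i,0}}{\partial x_i} = c,$$
an equation of exactly the same form in $n-1$ variables, contradicting the inductive hypothesis.

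The main technical point will be verifying compatibility: the term-by-term extensions of $\partial/\partial x_i$ on $k(x_1, \ldots, x_{n-1})((x_n))$ must restrict to the usual derivations on $k(x_1, \ldots, x_n)$. This follows because both are derivations on $k(x_1, \ldots, x_{n-1})((x_n))$ agreeing on the generators $x_1, \ldots, x_n$ and vanishing on the constants $k$, and derivations of a field are uniquely determined by their action on a generating set. Beyond this standard check, the rest of the argument is a direct formal computation with Laurent series.
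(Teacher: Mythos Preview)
Your proof is correct and, like the paper's, proceeds by induction on $n$, reducing to an equation of the same shape in fewer variables. The reduction step, however, is implemented differently. The paper stays inside $k(x_1,\dots,x_n)$: it picks $l$ large enough that $x_1^l F_i$ has no $x_1$ in its denominator, multiplies the equation by $x_1^l$, applies $(\partial/\partial x_1)^l$, and then evaluates at $x_1=0$; the key computation is the operator identity $[x\,\partial_x,\;\partial_x^{\,l}x^l]=0$, checked by a short induction. Your approach instead passes to the larger field $k(x_1,\dots,x_{n-1})((x_n))$ and reads off the $x_n^0$-coefficient directly. Both are extracting the same ``constant term'' information, but your Laurent-series argument trades the operator commutation for the (equally standard) verification that the termwise-extended derivations on the Laurent series field restrict correctly to $k(x_1,\dots,x_n)$. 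Your route is arguably more transparent and avoids the need to clear poles by hand; the paper's route has the minor advantage of never leaving the category of rational functions.
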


\begin{proof}
We proceed by induction on $n$.
For $n=0$ the claim is obvious.
Take $l\in \N \cup \lbrace 0\rbrace$ such that, for each $i$, $x_1$ is not a factor in the denominators of $x_1 ^l F_i$.
Multiply the equation~(\ref{eq:formequation}) by $x_1 ^l$ and apply $\left(\partial /\partial x_1\right) ^l$ to both sides of the equation.
We get
$$\sum _{i=1} ^n \left( \partial /\partial x_1\right) ^l(x_1 ^l x_i (\partial /\partial x_i) F_i) = l!c.$$
Notice that the operators $x(\partial /\partial x)$ and $\left( \partial /\partial x\right) ^l x^l$ commute.
That can be easily checked by induction.
Hence, the operators $x_i (\partial /\partial x_i)$ and $\left( \partial /\partial x _j\right) ^l x^l _j$ commute for any $i$ and $j$. 
We get
$$\sum _{i=1} ^n x_i (\partial /\partial x_i)\left( \partial /\partial x_1\right) ^l (x_1 ^l  F_i) = l!c.$$
Put $G_i := \left( \partial /\partial x_1 \right) ^l (x_1 ^l  F_i)$.
We obtain
$$\sum _{i=1} ^n x_i (\partial /\partial x_i) G_i = l!c.$$
But none of $G_i$ has $x_1$ as a factor in the denominator so we can substitute $x_1:=0$ in the above equation and obtain
$$\sum _{i=2} ^n x_i (\partial /\partial x_i) (G_i |_{x_1 =0}) = l!c.$$
By the induction hypothesis this equation has no solutions in rational functions.
\end{proof}

\begin{lemma}\label{lem:exactnesslemma}
Suppose the characteristic of $k$ is zero.
The closed form
$$\omega = \dfrac{dx_1}{x_1} \wedge \dots \wedge \dfrac{dx_n}{x_n} \in \Omega ^n _{k(x_1,\dots ,x_n)|k}$$
is not exact.
\end{lemma}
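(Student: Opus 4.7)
The plan is to reduce the exactness question to the PDE handled by Lemma~\ref{lem:technicalexactnesslemma}. Denote $K = k(x_1, \dots, x_n)$. The first step is to use that $\Omega^1_{K|k}$ is free of rank $n$ over $K$ on the basis $dx_1, \dots, dx_n$ (this holds because $x_1,\dots,x_n$ is a separating transcendence basis of $K/k$, cf.\ \cite[\S 26]{matsumura1989commutative}), so every $(n-1)$-form can be written uniquely as
\begin{equation*}
\eta = \sum_{j=1}^n H_j \, dx_1 \wedge \dots \wedge \widehat{dx_j} \wedge \dots \wedge dx_n, \qquad H_j \in K.
\end{equation*}

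Next I would assume, for contradiction, that $\omega = d\eta$ and compute $d\eta$ in coordinates. A direct calculation gives
\begin{equation*}
d\eta = \Bigl(\sum_{j=1}^n (-1)^{j-1} \frac{\partial H_j}{\partial x_j}\Bigr) dx_1 \wedge \dots \wedge dx_n,
\end{equation*}
whereas $\omega = (x_1 \cdots x_n)^{-1} \, dx_1 \wedge \dots \wedge dx_n$. Equating coefficients and multiplying through by $x_1 \cdots x_n$, I would then introduce the rescaled rational functions $F_j := (-1)^{j-1} x_1 \cdots \widehat{x_j} \cdots x_n \cdot H_j \in K$. Using that $x_1 \cdots \widehat{x_j} \cdots x_n$ does not depend on $x_j$, the relation rewrites as
\begin{equation*}
\sum_{j=1}^n x_j \frac{\partial F_j}{\partial x_j} = 1.
\end{equation*}

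Finally I would invoke Lemma~\ref{lem:technicalexactnesslemma} with $c = 1 \in k^\times$ to conclude that no such tuple $(F_1, \dots, F_n) \in K^n$ can exist, yielding the desired contradiction. The only non-routine point is the initial step of identifying $\Omega^{n-1}_{K|k}$ with the free exterior algebra on $dx_1, \dots, dx_n$ over $K$, which I expect to be the main obstacle to a clean write-up; after that, the computation and the application of the previous lemma are essentially forced.
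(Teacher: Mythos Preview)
Your proposal is correct and follows essentially the same route as the paper: write $\eta$ in the basis $dx_1\wedge\dots\wedge\widehat{dx_j}\wedge\dots\wedge dx_n$, equate coefficients of $dx_1\wedge\dots\wedge dx_n$, rescale by $(-1)^{j-1}x_1\cdots\widehat{x_j}\cdots x_n$ to land in the equation $\sum_j x_j(\partial/\partial x_j)F_j=1$, and invoke Lemma~\ref{lem:technicalexactnesslemma}. The paper justifies the nonvanishing of $dx_1\wedge\dots\wedge dx_n$ by the same reference to \cite[\S 26]{matsumura1989commutative} that you use for freeness of $\Omega^1_{K|k}$.
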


\begin{proof}
Suppose there exists $\eta\in\Omega ^{n-1} _{k(x_1,\dots ,x_n)|k}$ such that $d\eta =\omega$.
Write $\eta$ as
$$\eta = \sum _{i=1} ^n F_i dx_1 \wedge\dots\wedge\widehat{dx_i} \wedge\dots\wedge dx_n$$
where $F_i\in k(x_1,\dots ,x_n)$.
Then
$$d\eta = \left[ \sum _{i=1} ^n (-1)^{i+1}(\partial /\partial x_i) F_i\right] dx_1 \wedge\dots\wedge dx_n.$$
The equality $d\eta =\omega$ then takes the form
$$\left[ \sum _{i=1} ^n (-1)^{i+1}(\partial /\partial x_i) F_i\right] dx_1 \wedge\dots\wedge dx_n = \dfrac{1}{x_1\dots x_n} dx_1 \wedge\dots\wedge dx_n.$$

The vector $dx_1\wedge\dots\wedge dx_n$ of the vector space $\Omega ^n _{k(x_1,\dots ,x_n)|k}$ over the field $k(x_1,\dots ,x_n)$  is not zero \cite[p. $201$]{matsumura1989commutative}.
Hence,
$$\sum _{i=1} ^n (-1)^{i+1}(\partial /\partial x_i) F_i = \dfrac{1}{x_1 \dots x_n}.$$
Put $G_i := (-1)^{i+1} x_1\dots \hat{x} _i \dots x_n F_i$.
Then the above equation takes the form
$$\sum _{i=1} ^n x_i (\partial /\partial x_i) G_i = 1.$$
By Lemma~\ref{lem:technicalexactnesslemma} this equation has no solutions in rational functions.
\end{proof}

\begin{theorem}\label{thm:maintheorem}
Suppose $k$ is of characteristic $0$.
Take $A$ a $k$-algebra and a set of invertible elements $a_1,\dots ,a_n$ in $A$ that are algebraically independent over $k$.
Then the closed form
$$\omega =\dfrac{da_1}{a_1}\wedge\dots\wedge\dfrac{da_n}{a_n} \in \Omega ^n _{A|k}$$
is not exact.
\end{theorem}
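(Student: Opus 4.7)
The plan is to reduce to Lemma~\ref{lem:exactnesslemma} by mapping $\omega$ into a form on a large function field in a way that can be tracked on cohomology.

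First I would replace $A$ by a field. By Lemma~\ref{lem:reducinglemma} there is a prime $\pp\subset A$ such that the images $\bar a_1,\dots,\bar a_n$ in $A/\pp$ remain algebraically independent over $k$; since each $a_i$ is invertible in $A$ it cannot lie in $\pp$, so the $\bar a_i$ are nonzero in $A/\pp$ and invertible in its fraction field $F:=\mathrm{Frac}(A/\pp)$. The composition $A\to A/\pp\hookrightarrow F$ induces a dg-algebra morphism $\Omega^\bullet_{A|k}\to\Omega^\bullet_{F|k}$ carrying $\omega$ to $\bar\omega:=(d\bar a_1/\bar a_1)\wedge\dots\wedge(d\bar a_n/\bar a_n)$, so it is enough to show that $\bar\omega$ is not exact in $\Omega^n_{F|k}$.

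Inside $F$ sits the subfield $K:=k(\bar a_1,\dots,\bar a_n)$, and by algebraic independence the evaluation $x_i\mapsto\bar a_i$ identifies $K$ with $k(x_1,\dots,x_n)$. Under this identification the class $[\bar\omega]$ is the class treated in Lemma~\ref{lem:exactnesslemma}, which is nonzero in $\HS^n(\Omega^\bullet_{K|k})$. Thus it suffices to prove that the map
\[
\HS^n(\Omega^\bullet_{K|k})\longrightarrow \HS^n(\Omega^\bullet_{F|k})
\]
induced by $K\hookrightarrow F$ is injective. Choose a transcendence basis $\Gamma=\{\gamma_\alpha\}_{\alpha\in I}$ of $F$ over $K$, so that $F$ is algebraic over $K(\Gamma)$, and factor the inclusion as $K\hookrightarrow K(\Gamma)\hookrightarrow F$.

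For the transcendental step, write $K(\Gamma)=\colim_{S}K(S)$ as the filtered colimit over finite subsets $S\subset\Gamma$. Since $K$ is infinite (it contains $k$ of characteristic $0$), Lemma~\ref{lem:transctracelemma} says each map $\HS^n(\Omega^\bullet_{K|k})\to\HS^n(\Omega^\bullet_{K(S)|k})$ is injective, and by Lemma~\ref{lem:kahlerformspreservecolimits} together with exactness of filtered colimits, $\HS^n(\Omega^\bullet_{K(\Gamma)|k})=\colim_S\HS^n(\Omega^\bullet_{K(S)|k})$; a class killed in the colimit is already killed at some stage, so injectivity is preserved. For the algebraic step, write $F$ as the filtered colimit of its finite subextensions $L/K(\Gamma)$; Lemma~\ref{lem:tracelemma} makes each $\HS^n(\Omega^\bullet_{K(\Gamma)|k})\to\HS^n(\Omega^\bullet_{L|k})$ injective, and the same colimit argument gives injectivity of $\HS^n(\Omega^\bullet_{K(\Gamma)|k})\hookrightarrow\HS^n(\Omega^\bullet_{F|k})$. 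Composing the two injections yields the desired one, and the result follows.

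The main obstacle is the middle step: producing a target where the class is visibly nonzero. The field $F$ itself is typically of huge transcendence degree over $K$ and not finite over it, so one has to split the extension into a purely transcendental and an algebraic piece and then quote both Lemmas~\ref{lem:tracelemma} and~\ref{lem:transctracelemma}; compatibility of $\Omega^\bullet$ with filtered colimits (Lemma~\ref{lem:kahlerformspreservecolimits}) is what glues the pieces together.
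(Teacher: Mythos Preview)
Your proof is correct and follows the same strategy as the paper's: pass to the fraction field of $A/\pp$, identify the subfield $k(\bar a_1,\dots,\bar a_n)\cong k(x_1,\dots,x_n)$, and show that the inclusion into the ambient field is injective on de~Rham cohomology by splitting it into a purely transcendental piece (Lemma~\ref{lem:transctracelemma}) and an algebraic piece (Lemma~\ref{lem:tracelemma}), then invoke Lemma~\ref{lem:exactnesslemma}.

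The only organizational difference is where the finiteness is enforced. The paper first reduces to $A$ finitely generated via Lemma~\ref{lem:kahlerformspreservecolimits}, so that $\mathrm{Frac}(A/\pp)$ is a finitely generated field over $k$; one then picks a \emph{finite} transcendence basis containing $\bar a_1,\dots,\bar a_n$ and gets a \emph{finite} algebraic extension on top, so Lemmas~\ref{lem:transctracelemma} and~\ref{lem:tracelemma} apply directly with no further colimit bookkeeping. You instead keep $A$ arbitrary and push the colimit arguments to the field level, treating $K(\Gamma)$ and then $F$ as filtered colimits of finite subextensions. Both routes work; the paper's is slightly cleaner since the colimit is invoked once rather than twice, while yours has the minor advantage of not needing to check that the prime $\pp$ can be chosen compatibly with a finitely generated subalgebra containing all the $a_i$ and their inverses.
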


\begin{proof}
By Lemma~\ref{lem:kahlerformspreservecolimits} it is enough to consider $A$ being finitely generated.
By Lemma~\ref{lem:reducinglemma} there is a prime ideal $\pp\subset A$ such that the elements $a_1,\dots ,a_n$ are still algebraically independent in $A/\pp$.
Consider the field of fractions $\mathrm{Frac} (A/\pp)$ which is finitely generated over $k$ as a field.
The elements $a_1,\dots ,a_n$ are algebraically independent in $\mathrm{Frac} (A/\pp)$.

Take a finite transcendental basis $\hat{\Gamma} =\lbrace a_1,\dots ,a_n,\gamma _1,\dots ,\gamma _l \rbrace$ of $\mathrm{Frac} (A/\pp)$ over $k$.
The field $\mathrm{Frac} (A/\pp)$ is a finite extension of $k(\hat{\Gamma} )$.
By Lemma~\ref{lem:tracelemma} and Lemma~\ref{lem:transctracelemma} the composition $k(a_1,\dots , a_n)\to k(\hat{\Gamma} )\to \mathrm{Frac} (A/\pp)$ induces an injection on the de Rham cohomology.

The image of $\omega$ in $\Omega ^n _{\mathrm{Frac} (A/\pp)|k}$ coincides with the image of the closed form
$$\omega _0 =\dfrac{da_1}{a_1}\wedge\dots\wedge \dfrac{da_n}{a_n}\in \Omega ^n _{k(a_1,\dots , a_n)|k},$$
which is not exact by Lemma~\ref{lem:exactnesslemma}.
\end{proof}

\begin{theorem}\label{thm:psinotinjective}
Let $X$ be a topological space and $f\in C(X)$ take an infinite number of distinct values.
Take a subalgebra $A\subset C(X)$ such that $e^{\lambda f}\in A$ for all $\lambda \in \R$.
Then for each $n\geq 1$ $\HS ^n (\Omega ^\bullet _{A|\R})\neq 0$, moreover, the map
$$\Psi _A :\HS ^n (\Omega ^\bullet _{A|R})\to \HH ^n (X,\RU _X [0])$$
is not injective.
\end{theorem}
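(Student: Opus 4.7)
The plan is to produce, for each $n\geq 1$, a closed algebraic $n$-form $\omega\in\Omega^n_{A|\R}$ whose cohomology class is nonzero in $\HS^n(\Omega^\bullet_{A|\R})$ but is killed by $\Psi_A$. Guided by Theorem~\ref{thm:maintheorem}, the natural candidate is a wedge of logarithmic differentials $\omega=\tfrac{da_1}{a_1}\wedge\cdots\wedge\tfrac{da_n}{a_n}$, so the first task is to exhibit $n$ invertible elements $a_i\in A$ that are algebraically independent over $\R$.

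For this I would pick real numbers $\lambda_1,\dots,\lambda_n$ linearly independent over $\Q$ and set $a_i:=e^{\lambda_i f}$. Each $a_i$ lies in $A$ by hypothesis and is invertible with inverse $e^{-\lambda_i f}\in A$. The main verification is algebraic independence: if a non-zero polynomial $P\in\R[X_1,\dots,X_n]$ satisfied $P(a_1,\dots,a_n)=0$ in $A\subset C(X)$, then pointwise evaluation would give $\sum_\alpha c_\alpha e^{\mu_\alpha f(x)}=0$ for all $x\in X$, where the exponents $\mu_\alpha:=\sum_i\alpha_i\lambda_i$ are pairwise distinct thanks to the $\Q$-independence of the $\lambda_i$. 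A standard Rolle-induction argument shows that any non-zero exponential polynomial with distinct real exponents has only finitely many real zeros, which contradicts the assumption that $f$ attains infinitely many distinct values. Hence $a_1,\dots,a_n$ are algebraically independent, and Theorem~\ref{thm:maintheorem} yields that $[\omega]\in\HS^n(\Omega^\bullet_{A|\R})$ is non-zero; in particular $\HS^n(\Omega^\bullet_{A|\R})\neq 0$.

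For the second claim, I would introduce the finitely generated subalgebra $B:=\R[a_1,a_1^{-1},\dots,a_n,a_n^{-1}]\subset A$, which is the Laurent polynomial ring in $n$ algebraically independent generators. The form $\omega$ already lies in $\Omega^n_{B|\R}$, and by the construction of $\Psi_A$ as a colimit over finitely generated subalgebras one has $\Psi_A[\omega]=\Gamma_B^*(\Phi_B[\omega])$. The real spectrum $\spec B$ is naturally homeomorphic to $(\R\setminus\{0\})^n$, which is the disjoint union of $2^n$ convex, hence contractible, open subsets of $\R^n$. Consequently $\HH^n(\spec B,\RU_{\spec B}[0])=0$ for every $n\geq 1$, forcing $\Phi_B[\omega]=0$ and therefore $\Psi_A[\omega]=0$. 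Combined with $[\omega]\neq 0$ in $\HS^n(\Omega^\bullet_{A|\R})$, this shows $\Psi_A$ is not injective in degree $n$.

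The hard part will be the algebraic independence of the $a_i$; once that step is cleared, Theorem~\ref{thm:maintheorem} supplies the non-trivial cohomology class for free, and the fact that $\Psi_A$ annihilates it reduces to the elementary observation that $(\R\setminus\{0\})^n$ has trivial sheaf cohomology in positive degrees.
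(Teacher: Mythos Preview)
Your proof is correct. The first half---choosing $\lambda_1,\dots,\lambda_n$ linearly independent over $\Q$, setting $a_i=e^{\lambda_i f}$, and invoking Theorem~\ref{thm:maintheorem}---is exactly what the paper does; you even supply the algebraic-independence argument that the paper leaves implicit.

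For the second half you take a genuinely different route. The paper exploits naturality: it introduces the auxiliary algebra $E\subset C(\R)$ generated by all $e^{\lambda t}$, observes that $\omega$ is the image of a form $\tilde\omega\in\Omega^n_{E|\R}$ under the homomorphism induced by $f^*:C(\R)\to C(X)$, and then appeals to Proposition~\ref{prp:functorialityofpsiinspaces} together with $\HH^n(\R,\RU_\R[0])=0$ to conclude $\Psi_A[\omega]=0$. You instead unwind the colimit definition of $\Psi_A$ directly: working with the finitely generated Laurent subalgebra $B=\R[a_1^{\pm1},\dots,a_n^{\pm1}]$, you identify $\spec B\cong(\R\setminus\{0\})^n$, note that this space has vanishing sheaf cohomology in positive degrees, and deduce $\Phi_B[\omega]=0$, hence $\Psi_A[\omega]=\Gamma_B^*\Phi_B[\omega]=0$. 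Both arguments are short and valid. The paper's version is slightly more robust in that it never needs to identify $\spec B$ or compute its cohomology, relying only on the contractibility of $\R$ and the already-established functoriality of $\Psi$; your version is more self-contained, bypassing Proposition~\ref{prp:functorialityofpsiinspaces} entirely and making the vanishing visible at the level of a single finitely generated algebra.
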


\begin{proof}
Choose a set of linearly independent over $\mathbb{Q}$ numbers $\lambda _1,\dots ,\lambda _n \in \R$ and consider the functions $a_i :=e^{\lambda _i f(x)}\in A$.
These functions are algebraically independent and invertible, hence, the closed form
$$\omega =\dfrac{da_1}{a_1}\wedge\dots\wedge\dfrac{da_n}{a_n}\in \Omega ^n _{A|\R}$$
is not exact by Theorem~\ref{thm:maintheorem}.

For the second part, we prove that $\Psi _A ([\omega ]) = 0$.
Denote by $i:A\hookrightarrow C(X)$ the inclusion.
Consider the algebra $E\subset C(\R )$ generated as an algebra by the functions $e^{\lambda t}\in C(\R )$.
We have the commutative diagram
\begin{equation*}
\begin{tikzcd}
C(X)&A\arrow[l,hook']\\
C(\R )\arrow[u,"f^*"]&E \arrow[l,hook']\arrow[u,"\varphi"'],
\end{tikzcd}
\end{equation*}
where the homomorphism $\varphi$ is induced by the homomorphism $f^*$.
Then the following diagram is commutative by Proposition~\ref{prp:functorialityofpsiinspaces}:
\begin{equation*}
\begin{tikzcd}
\HS ^n (\Omega ^\bullet _{A|\R}) \arrow[r,"\Psi _{A}"] & \HH ^n (X,\RU _{X} [0])\\
\HS ^n (\Omega ^\bullet _{E|\R}) \arrow[u,"\HS(\Omega _{\varphi})"]\arrow[r,"\Psi _{E}"] & \HH ^n (\R,\RU _{\R} [0]) \rlap{=0.}\arrow[u,"f^*"']
\end{tikzcd}
\end{equation*}
The equality $\HH ^n (\R,\RU _{\R} [0]) =0$ is well known.

Consider the functions $\tilde{a}_i :=e^{\lambda _i t}\in E$ and the closed form
$$\tilde{\omega} =\dfrac{d\tilde{a}_1}{\tilde{a}_1}\wedge\dots\wedge\dfrac{d\tilde{a}_n}{\tilde{a}_n}\in \Omega ^n _{E|\R}.$$
We have $\omega = \Omega _{\varphi} (\tilde{\omega})$.
By the commutativity of the above diagram, $\Psi _{A} ([\omega])=0$.
Since $[\omega]\neq 0$, the map $\Psi _{A}$ is not injective.
\end{proof}

\begin{corollary}\label{cor:lambdaisnotsurjective}
Suppose $X$ is a compact Hausdorff space and $\FC$ is a soft subsheaf of $C_X$ such that $\FC (X)$ satisfies the conditions imposed on the algebra $A$ in Theorem~\ref{thm:psinotinjective}.
Then
$\Lambda _\FC :\HH ^n(X,\RU _X [0])\to \HS ^n (\Omega ^\bullet _{\FC|\R})$
is not surjective.
\end{corollary}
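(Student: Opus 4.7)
The plan is to derive a contradiction from the splitting established in Theorem~\ref{thm:compositionforsoftsheaf} combined with the failure of injectivity in Theorem~\ref{thm:psinotinjective}. Put $A := \FC(X)$.

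First I would recall that Theorem~\ref{thm:compositionforsoftsheaf} gives the identity $\Psi_{\FC(X)} \circ \Lambda_\FC = \mathrm{Id}$ on $\HH^n(X,\RU_X[0])$. In particular, $\Lambda_\FC$ admits a left inverse, hence is injective, and $\Psi_{\FC(X)}$ is automatically surjective in every degree.

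Next, I would argue by contradiction. Suppose $\Lambda_\FC$ were surjective in degree $n$. Combined with injectivity from the previous step, this would make $\Lambda_\FC$ a bijection. But then the relation $\Psi_{\FC(X)} \circ \Lambda_\FC = \mathrm{Id}$ would force $\Psi_{\FC(X)} = \Lambda_\FC^{-1}$ in degree $n$, so $\Psi_{\FC(X)}$ itself would be bijective; in particular, injective.

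Finally, I would invoke Theorem~\ref{thm:psinotinjective}: since $A = \FC(X)$ satisfies the hypotheses (it contains $e^{\lambda f}$ for all $\lambda\in\R$ for some continuous $f$ taking infinitely many values), the map $\Psi_A$ fails to be injective in degree $n\geq 1$. This contradicts the conclusion of the previous paragraph and proves that $\Lambda_\FC$ cannot be surjective. No serious obstacle is expected; the argument is a direct formal consequence of the splitting $\Psi \circ \Lambda = \mathrm{Id}$ together with the explicit non-injectivity witness $[\omega] = [da_1/a_1\wedge\cdots\wedge da_n/a_n]$ constructed in Theorem~\ref{thm:psinotinjective}.
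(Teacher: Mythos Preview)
Your argument is correct and follows the same route as the paper: combine $\Psi_{\FC(X)}\circ\Lambda_\FC=\Id$ from Theorem~\ref{thm:compositionforsoftsheaf} with the non-injectivity of $\Psi_{\FC(X)}$ from Theorem~\ref{thm:psinotinjective} to conclude that $\Lambda_\FC$ cannot be surjective. The paper states this in one line without the explicit contradiction framing, but the content is identical.
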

\begin{proof}
By Theorem~\ref{thm:compositionforsoftsheaf} the composition $\Psi _{\FC (X)}\circ \Lambda _\FC = \Id$.
By Theorem~\ref{thm:psinotinjective} $\Psi _{\FC (X)}$ is not injective and, hence, $\Lambda _\FC$ is not surjective.
\end{proof}

As the first example one can consider a smooth manifold $M$ of positive dimension and $A=C^\infty (M)$.
Then $\HS ^n (\Omega ^\bullet _{C^\infty (M)|\R})\neq 0$ for $n\geq 1$.
Also, one can consider an infinite compact Hausdorff space $X$ and $A=C(X)$.
Then $\HS ^n (\Omega ^\bullet _{C(X)|\R})\neq 0$ for $n\geq 1$.
\printbibliography

\Addresses
\end{document}